\newcommand*{\mathcolor}{}
\def\mathcolor#1#{\mathcoloraux{#1}}
\newtheorem{theorem}{Theorem}[section]
\newtheorem{lemma}[theorem]{Lemma}
\newtheorem{proposition}[theorem]{Proposition}
\newtheorem{cor}[theorem]{Corollary}
\newtheorem{definition}[theorem]{Definition}
\newtheorem{example}[theorem]{Example}
\newtheorem{remark}[theorem]{Remark}
\newtheorem{convention}[theorem]{Convention}
\newcommand\encircle[1]{%
  \tikz[baseline=(X.base)] 
    \node (X) [draw, shape=circle, inner sep=0] {\strut #1};}
\def\v@rt#1#2{\m@th\ooalign{$\hfil#1|\hfil$\crcr$#1#2$}}
\def\captr{\mathrel{\mathpalette\v@rt\cap}}
\newcommand{\Rmnum}[1]{\expandafter\@slowromancap\romannumeral #1@}
\begin{document}

\title{Interval Garside Structures for the Complex Braid Groups $B(e,e,n)$}
\author{Georges Neaime\\
Laboratoire de Mathématiques Nicolas Oresme\\
Universit\'e de Caen Normandie\\
\texttt{georges.neaime@unicaen.fr}
}

\date{September 25, 2018}

\maketitle

\begin{abstract}

We define geodesic normal forms for the general series of complex reflection groups $G(e,e,n)$. This requires the elaboration of a combinatorial technique in order to explicitly determine minimal word representatives of the elements of $G(e,e,n)$ over the generating set of the presentation of Corran-Picantin. Using these geodesic normal forms, we construct intervals in $G(e,e,n)$ that are lattices. This gives rise to interval Garside groups. We determine which of these groups are isomorphic to the complex braid group $B(e,e,n)$ and get a complete classification. For the other Garside groups that appear in our construction, we provide some of their properties and compute their second integral homology groups in order to understand these new structures.

\end{abstract}

\begin{small}
\tableofcontents 
\end{small}

\newpage

\section{Introduction}\label{SectionIntroduction}

A complex reflection is a linear transformation of finite order, which fixes a hyperplane pointwise. Let $W$ be a finite subgroup of $GL_n(\mathbb{C})$ with $n \geq 1$ and $\mathcal{R}$ be the set of complex reflections of $W$. We say that $W$ is a complex reflection group if $W$ is generated by $\mathcal{R}$. It is well known that every complex reflection group is a direct product of irreducible ones. These irreducible complex reflection groups have been classified by Shephard and Todd \cite{ShephardTodd} in 1954. The classification consists of the following cases:
\begin{itemize}
\item The infinite series $G(de,e,n)$ depending on three positive integer parameters,
\item $34$ exceptional groups.
\end{itemize}

As we are interested in the groups of the infinite series, we provide the definition of the group $G(de,e,n)$. For the definition of the $34$ exceptional groups, see \cite{ShephardTodd}.
\begin{definition}\label{DefinitionG(de,e,n)}
The group $G(de,e,n)$ is defined as the group of $n \times n$ monomial matrices (each row and column has a unique nonzero entry), where
\begin{itemize}
\item all nonzero entries are $de$-th roots of unity and
\item the product of all the nonzero entries is a $d$-th root of unity.
\end{itemize} 
\end{definition}

Brou\'e, Malle and Rouquier \cite{BMR} managed to associate a complex braid group $B$ to each complex reflection group $W$. The definition of the complex braid group related to $W$ is as follows. Let $\mathcal{A} := \{Ker(s-1)$ s.t. $s \in \mathcal{R} \}$ be the hyperplane arrangement and $X := \mathbb{C}^n\setminus\bigcup\mathcal{A}$ be the hyperplane complement. The complex reflection group $W$ acts naturally on $X$. Let $X/W$ be its space of orbits. 
\begin{definition}
The complex braid group associated with $W$ is defined as the fundamental group:  $$B:=\pi_1(X/W).$$
\end{definition}

If $W$ is equal to $G(de,e,n)$, then we denote by $B(de,e,n)$ the associated complex braid group. According to the results in \cite{BMR}, one can readily check that the complex braid groups $B(de,e,n)$ are all isomorphic to $B(2e,e,n)$ for $d > 1$. Hence the complex braid groups associated with the $3$-parameter series $G(de,e,n)$ arise from the two $2$-parameter series $G(e,e,n)$ and $G(2e,e,n)$. This paper concerns the complex braid groups $B(e,e,n)$ and their associated complex reflection groups $G(e,e,n)$.\\

Let $W$ be a real reflection group, meaning that $W$ is a subgroup of $GL_n(\mathbb{R})$. By \cite{BrieskornArtinTitsFundamentalGroup} and \cite{Bourbaki}, we recover in the previous definitions the notion of Coxeter and Artin-Tits groups that we recall now.

\begin{definition}\label{DefinitionCoxeterGroups}

Assume that $W$ is a group and $S$ be a subset of $W$. For $s$ and $t$ in $S$, let $m_{st}$ be the order of $st$ if this order is finite, and be $\infty$ otherwise. We say that $(W,S)$ is a Coxeter system, and that $W$ is a Coxeter group, if $W$ admits the presentation with generating set $S$ and relations: 
\begin{itemize}
\item quadratic relations: $s^2=1$ for all $s \in S$ and
\item braid relations: $\underset{m_{st}}{\underbrace{sts\cdots}}=\underset{m_{st}}{\underbrace{tst\cdots}}$  for $s,t \in S$, $s \neq t$ and $m_{st} \neq \infty$.
\end{itemize}
\end{definition}

We define the Artin-Tits group $B(W)$ associated with a Coxeter system $(W,S)$ as follows.

\begin{definition}\label{DefinitionArtinTits}
The Artin-Tits group $B(W)$ associated with a Coxeter system $(W,S)$ is defined by a presentation with generating set $\widetilde{S}$ in bijection with the generating set $S$ of the Coxeter group and the relations are only the braid relations: $\underset{m_{st}}{\underbrace{\tilde{s}\tilde{t}\tilde{s}\cdots}}=\underset{m_{st}}{\underbrace{\tilde{t}\tilde{s}\tilde{t}\cdots}}$ for $\tilde{s}, \tilde{t} \in \widetilde{S}$ and $\tilde{s} \neq \tilde{t}$, where $m_{st} \in \mathbb{Z}_{\geq 2}$ is the order of $st$ in $W$.
\end{definition}

Consider $W = S_n$, the symmetric group with $n \geq 2$. It is a Coxeter group with set of generators $S$ consisting of simple transpositions in $S_n$. The Artin-Tits group associated with the Coxeter system $(S_n,S)$ is the usual braid group denoted by $B_n$. It is defined as follows.

\begin{definition}\label{DefinitionClasBraidGroup}
The braid group $B_n$ is defined by a presentation with generators $\tilde{s}_1, \tilde{s}_2, \cdots, \tilde{s}_{n-1}$ and relations:
\begin{enumerate}
\item $\tilde{s}_{i}\tilde{s}_{i+1}\tilde{s}_i = \tilde{s}_{i+1}\tilde{s}_i\tilde{s}_{i+1}$ for $1 \leq i \leq n-2$,
\item $\tilde{s}_i\tilde{s}_j=\tilde{s}_j\tilde{s}_i$ for $|i-j| > 1$.
\end{enumerate}
\end{definition}

This presentation can be described by the following diagram. The nodes are the generators of the presentation. An edge between two adjacent nodes describes Relation 1 of Definition \ref{DefinitionClasBraidGroup}. Relation 2 is described by the fact that there is no edge between the corresponding nodes.

\begin{figure}[H]\label{PresofB_nDiagram}

\begin{center}
\begin{tikzpicture}

\node[draw, shape=circle, label=above:$\tilde{s}_1$] (1) at (0,0) {};
\node[draw, shape=circle, label=above:$\tilde{s}_2$] (2) at (1,0) {};
\node[draw, shape=circle,label=above:$\tilde{s}_{n-2}$] (n-2) at (4,0) {};
\node[draw,shape=circle,label=above:$\tilde{s}_{n-1}$] (n-1) at (5,0) {};

\draw[thick,-] (1) to (2);
\draw[thick,dashed,-] (2) to (n-2);
\draw[thick,-] (n-2) to (n-1);

\end{tikzpicture}
\end{center}

\caption{Diagram for the presentation of $B_n$.}
\end{figure}
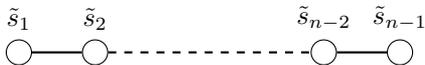

It is widely believed that complex braid groups share similar properties with Artin-Tits groups. One would like to extend what is known for Artin-Tits groups to other complex braid groups. An important feature about finite-type Artin-Tits groups is that they admit Garside structures. One of the important aspect of a Garside structure is the existence of normal forms that enable us to solve the Word and Conjugacy problems for the associated groups.  Garside structures also enjoy important group-theoretical, homological and homotopical properties. It is therefore interesting to construct Garside structures for a given group. Subsections 1.1 and 1.2 are devoted to introduce the concept of Garside structures and interval Garside structures.\\

For instance, it is shown by Bessis and Corran \cite{BessisCorranNonCrossingPartitions} in 2004, and by Corran and Picantin \cite{CorranPicantin} in 2009 that the complex braid group $B(e,e,n)$ admits Garside structures. The aim of this paper is to construct interval Garside structures for $B(e,e,n)$ that derive from natural and explicit intervals in the associated complex reflection group.\\

In Section \ref{SectionLength}, we determine geodesic normal forms for all the elements of $G(e,e,n)$ over an appropriate generating set and present an algorithm to explicitly compute them.  As an application, we are able to determine the elements of maximal length in $G(e,e,n)$ and to construct intervals. Next, we prove that these intervals are lattices which gives rise to interval Garside structures, see Theorem \ref{TheoremIntervalStructure}. The combinatorial techniques used are elementary and the associated characterizations are elegant. In Section \ref{SectionIsomB(een)}, we provide nice presentations for these structures that we denote by $B^{(k)}(e,e,n)$ for $1 \leq k \leq e-1$ and identify which of them are isomorphic to $B(e,e,n)$. For the Garside structures that are not isomorphic to $B(e,e,n)$, we provide some of their properties and compute their second integral homology groups, see Proposition \ref{PropH2ofBkeen}. Thus, we get a complete classification (in Theorem \ref{TheoremBkIsomBiff}) of the interval Garside structures of the complex braid groups $B(e,e,n)$ that derive from our construction:
\begin{center}
$B^{(k)}(e,e,n)$ is isomorphic to $B(e,e,n)$ if and only if $k$ and $e$ are coprime.\\
\end{center}

These Garside structures have been implemented by Michel and the author by using the development version of the CHEVIE package for GAP3 (see \cite{CHEVIEJMichel} and \cite{GAP3CPMichelNeaime}). The explicitness of the geodesic normal forms is what makes this implementation possible. In the next part of the introduction, we include the necessary preliminaries to accurately describe these Garside structures.

\subsection{Garside monoids and groups}

In his PhD thesis, defended in 1965 \cite{GarsideThesis}, and in the article that followed \cite{GarsideArt}, Garside solved the Conjugacy Problem for the braid group $B_n$ by introducing a submonoid $B_{n}^{+}$ of $B_n$ and an element $\Delta_n$ of $B_{n}^{+}$ that he called fundamental, and then showing that there exists a normal form for every element in $B_n$. In the beginning of the 1970's, it was realized by Brieskorn and Saito \cite{GarsideBrieskornSaito} and Deligne \cite{GarsideDeligne} that Garside's results extend to all finite-type Artin-Tits groups. At the end of the 1990's, after listing the abstract properties of $B_n^{+}$ and the fundamental element $\Delta_n$, Dehornoy and Paris \cite{GarsideDehornoyParis} defined the notion of Gaussian groups and Garside groups which leads, in ``a natural, but slowly emerging program'' as stated in \cite{GarsideBookPatrick}, to Garside theory. For a detailed study about Garside structures, we refer the reader to \cite{GarsideBookPatrick}.\\

Let $M$ be a monoid. Under some assumptions about $M$, more precisely the assumptions 1 and 2 of Definition \ref{DefGarsideMonoid} below, one can define a partial order relation on $M$ as follows.

\begin{definition}\label{DefinitionLeftDivisorMonoid}

Let $f,g \in M$. We say that $f$ left-divides $g$ or simply $f$ divides $g$ when there is no confusion, written $f \preceq g$, if $fg'=g$ holds for some $g' \in M$. Similarly, we say that $f$ right-divides $g$, written $f \preceq_r g$, if $g'f=g$ holds for some $g' \in M$. 

\end{definition}

The definition of a Garside monoid is the following.

\begin{definition}\label{DefGarsideMonoid}

A Garside monoid is a pair $(M, \Delta)$, where $M$ is a monoid and

\begin{enumerate}

\item $M$ is cancellative, that is $fg=fh \Longrightarrow g=h$ and $gf=hf \Longrightarrow g=h$ for $f,g,h \in M$,

\item there exists $\lambda : M \longrightarrow \mathbb{N}$ s.t. $\lambda(fg) \geq \lambda(f) + \lambda(g)$ and $g \neq 1 \Longrightarrow \lambda(g) \neq 0$,

\item any two elements of $M$ have a gcd and an lcm for $\preceq$ and $\preceq_r$,

\item $\Delta$ is a \emph{Garside element} of $M$, this meaning that the set of its left divisors coincides with the set of its right divisors, generate $M$, and is finite. 

\end{enumerate} 

The divisors of $\Delta$ are called the \emph{simples} of $M$.

\end{definition}

A quasi-Garside monoid is a pair $(M,\Delta)$ that satisfies the conditions of Definition \ref{DefGarsideMonoid}, except the finiteness of the number of divisors of $\Delta$.\\

Assumptions 1 and 3 of Definition \ref{DefGarsideMonoid} ensure that Ore's conditions (see \cite{GarsideBookPatrick}) are satisfied. Hence there exists a group of fractions of the monoid $M$ in which it embeds. This allows us to give the following definition.

\begin{definition}

A Garside group is the group of fractions of a Garside monoid.

\end{definition}

Consider a pair $(M,\Delta)$ that satisfies the conditions of Definition \ref{DefGarsideMonoid}. The pair $(M,\Delta)$ and the group of fractions of $M$ provide a Garside structure for $M$.

\subsection{Interval Garside structures}

Let $G$ be a finite group generated by a finite set $S$. There is a way to construct Garside structures from intervals in $G$. Let us start by defining a partial order relation on $G$.

\begin{definition}\label{DefPartalOrder11}

Let $f, g \in G$. We say that $g$ is a divisor of $f$ or $f$ is a multiple of $g$, and write $g \preceq f$, if $f = g h$ with $h \in G$ and $\ell(f) = \ell(g) + \ell(h)$, where $\ell(f)$ is the length over $S$ of $f \in G$.

\end{definition}

\begin{definition}\label{DefGeneralIntervalMonoid}

For $w \in G$, define a monoid $M([1,w])$ by the monoid presentation with

\begin{itemize}

\item generating set $\underline{P}$ in bijection with the interval
\begin{center} $[1,w] := \{ f \in G \ | \ 1 \preceq f \preceq w \}$ and \end{center}

\item relations: $\underline{f}\ \underline{g} = \underline{h}$ if $f, g , h \in [1,w]$, $fg=h$, and $f \preceq h$, that is \mbox{$\ell(f) + \ell(g) = \ell(h)$}.

\end{itemize}

\end{definition}

\noindent Similarly, one can define the partial order relation on $G$ as follows: 
\begin{center}$g \preceq_r f$ if and only if $\ell(f{g}^{-1}) + \ell(g) = \ell(f)$, \end{center}
then define the interval $[1,w]_r$ and the monoid $M([1,w]_r)$. 

\begin{definition}

Let $w$ be in $G$. We say that $w$ is a balanced element of $G$ if $[1,w] = [1,w]_r$.

\end{definition}

We have the following theorem due to Michel (see Section 10 of \cite{CorseJeanMichel} for a proof).

\begin{theorem}\label{TheoremMichelGarside}

If $w \in G$ is balanced and both posets $([1,w],\preceq)$ and $([1,w]_r, \preceq_r)$ are lattices, then $(M([1,w]),\underline{w})$ is a Garside monoid with simples $\underline{[1,w]}$, where $\underline{w}$ and $\underline{[1,w]}$ are given in Definition \ref{DefGeneralIntervalMonoid}.

\end{theorem}

The monoid $M([1,w])$ is called an interval Garside monoid. Since it is a Garside monoid, its group of fractions exists and is denoted by $G(M([1,w]))$. We call it an interval Garside group. An interval Garside structure consists of an interval Garside monoid and its group of fractions. We will give a classical example of this structure. It shows that Artin-Tits groups admit interval Garside structures. For proofs and details about the statements given in this example, see Chapter 9, Section 1.3 in \cite{GarsideBookPatrick}.

\begin{example}\label{ExampleArtinTitsIntervals}

Let $W$ be a finite Coxeter group and $B(W)$ the Artin-Tits group associated with $W$.

\begin{center} $W = <S\ |\ s^2=1, \ \underset{m_{st}}{\underbrace{sts\cdots}}=\underset{m_{st}}{\underbrace{tst\cdots}}$ for $s, t \in S, s \neq t, m_{st} = o(st)>,$\\
$B(W) = <\widetilde{S}\ |\ \underset{m_{st}}{\underbrace{\tilde{s}\tilde{t}\tilde{s}\cdots}}=\underset{m_{st}}{\underbrace{\tilde{t}\tilde{s}\tilde{t}\cdots}}\ for\ \tilde{s}, \tilde{t} \in \widetilde{S},\ \tilde{s} \neq \tilde{t}>$.
\end{center}
Take $G=W$ and $g=w_0$ the longest element over $S$ in $W$. We have \mbox{$[1,w_0] = W$}. Construct the interval monoid $M([1,w_0])$ as in Definition \ref{DefGeneralIntervalMonoid}. We have $M([1,w_0])$ is the Artin-Tits monoid $B^{+}(W)$, where $B^{+}(W)$ is the monoid defined by the same presentation as $B(W)$, see Definition \ref{DefinitionArtinTits}. Hence $B^{+}(W)$ is generated by a copy $\underline{W}$ of $W$ with $\underline{f}\ \underline{g} = \underline{h}$ if $fg=h$ and $\ell(f) + \ell(g) = \ell(h)$; $f,g$, and $h \in W$. It is also known that $w_0$ is balanced and both posets $([1,w_0],\preceq)$ and $([1,w_0]_r, \preceq_r)$ are lattices. Hence by Theorem \ref{TheoremMichelGarside}, we have the following result.

\end{example}

\begin{theorem}

$(B^{+}(W),\underline{w_0})$ is a Garside monoid with simples $\underline{W}$, where $\underline{w_0}$ and $\underline{W}$ are given in Example \ref{ExampleArtinTitsIntervals}.

\end{theorem}

\section{Presentations for \texorpdfstring{$G(e,e,n)$}{TEXT} and \texorpdfstring{$B(e,e,n)$}{TEXT}}\label{SectionGeenBeen}

In this section, we recall the presentations by generators and relations of $G(e,e,n)$ and $B(e,e,n)$ according to the results of \cite{BMR} and \cite{CorranPicantin}.

\subsection{Presentations of Brou\'e, Malle and Rouquier}

Recall that for $e,n \geq 1$, $G(e,e,n)$ is the group of $n \times n$ matrices consisting of monomial matrices, with all nonzero entries lying in $\mu_{e}$, the $e$-th roots of unity, and for which the product of the nonzero entries is $1$. Note that this family includes three families of finite Coxeter groups: $G(1,1,n)$ is the symmetric group, $G(e,e,2)$ is the dihedral group, and $G(2,2,n)$ corresponds to Coxeter-type $D_n$. We have the following result, see \cite{BMR}.

\begin{proposition}\label{PropPresBMGG(een)}
The complex reflection group $G(e,e,n)$ is isomorphic to the group defined by a presentation with generators $\mathbf{t}_0$, $\mathbf{t}_1$, $\mathbf{s}_3$, $\mathbf{s}_4$, $\cdots$, $\mathbf{s}_{n-1}$, $\mathbf{s}_n$ and relations as follows.
\begin{enumerate}

\item quadratic relations for all the generators,
\item the braid relations for $\mathbf{s}_3$, $\mathbf{s}_4$, $\cdots$, $\mathbf{s}_{n-1}$ given earlier in Definition \ref{DefinitionClasBraidGroup},
\item $\mathbf{s}_3 \mathbf{t}_i \mathbf{s}_3 = \mathbf{t}_i \mathbf{s}_3 \mathbf{t}_i$ for $i=0,1$,
\item $\mathbf{s}_j \mathbf{t}_i = \mathbf{t}_i \mathbf{s}_j$ for $i=0,1$ and $4 \leq j \leq n$,
\item $\mathbf{s}_3 \mathbf{t}_1\mathbf{t}_0\mathbf{s}_3\mathbf{t}_1\mathbf{t}_0 = \mathbf{t}_1 \mathbf{t}_0\mathbf{s}_3\mathbf{t}_1\mathbf{t}_0\mathbf{s}_3$,
\item $\underset{e}{\underbrace{\mathbf{t}_0\mathbf{t}_1\mathbf{t}_0\cdots}} = \underset{e}{\underbrace{\mathbf{t}_1\mathbf{t}_0\mathbf{t}_1\cdots}}$.

\end{enumerate}

\end{proposition}

This presentation is called the presentation of BMR (Brou\'e-Malle-Rouquier) of $G(e,e,n)$. It can be described by the following diagram.

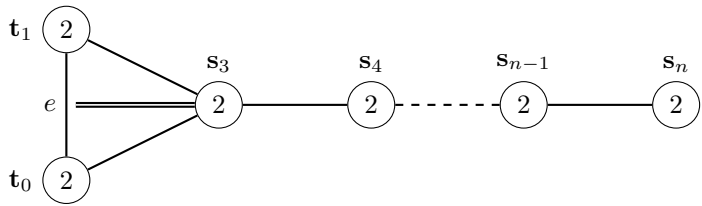
\begin{figure}[H]\label{PresofBMRofGeen}

\begin{center}
\begin{tikzpicture}

\node[draw, shape=circle, label=left:$\mathbf{t}_0$] (1) at (0,0) {2};
\node[draw, shape=circle, label=left:$\mathbf{t}_1$] (2) at (0,2) {2};
\node[draw, shape=circle,label=above:$\mathbf{s}_3$] (3) at (2,1) {2};
\node[draw, shape=circle,label=above:$\mathbf{s}_4$] (4) at (4,1) {2};
\node[draw, shape=circle,label=above:$\mathbf{s}_{n-1}$] (n-1) at (6,1) {2};
\node[draw,shape=circle,label=above:$\mathbf{s}_n$] (n) at (8,1) {2};

\node[] (0) at (0,1) {};

\draw[thick,-] (1) to node[auto] {$e$} (2);
\draw[thick,-] (1) to (3);
\draw[thick,-] (2) to (3);
\draw[thick,-] (3) to (4);
\draw[thick,dashed,-] (4) to (n-1);
\draw[thick,-] (n-1) to (n);
\draw[double,thick,-] (3) to (0);

\end{tikzpicture}
\end{center}

\caption{Diagram for the presentation of BMR of $G(e,e,n)$.}
\end{figure}

The matrices in $G(e,e,n)$ that correspond to the generators are given by $\mathbf{t}_i \longmapsto t_i:= \begin{pmatrix}

0 & \zeta_{e}^{-i} & 0\\
\zeta_{e}^{i} & 0 & 0\\
0 & 0 & I_{n-2}\\

\end{pmatrix}$ for $i=0,1$ and $\mathbf{s}_j \longmapsto s_j:= \begin{pmatrix}

I_{j-2} & 0 & 0 & 0\\
0 & 0 & 1 & 0\\
0 & 1 & 0 & 0\\
0 & 0 & 0 & I_{n-j}\\

\end{pmatrix}$ for $3 \leq j \leq n$, where $\zeta_e$ is the $e$-th root of unity that is equal to $exp(2i\pi/e)$ and $I_k$ is the identity matrix for $1 \leq k \leq n$.\\

According to \cite{BMR}, if we remove the quadratic relations in Proposition \ref{PropPresBMGG(een)}, we get a presentation of the complex braid group $B(e,e,n)$ associated with $G(e,e,n)$ that we call the presentation of BMR of $B(e,e,n)$. The generators of this presentation are in bijection with $\mathbf{t}_0$, $\mathbf{t}_1$, $\mathbf{s}_3$, $\mathbf{s}_4$, $\cdots$, $\mathbf{s}_n$ and are denoted by $\tilde{t}_0$, $\tilde{t}_1$, $\tilde{s}_3$, $\tilde{s}_4$, $\cdots$, $\tilde{s}_n$. The diagram of this presentation is the following.

\begin{figure}[H]\label{PresofBMRBeen}
\begin{center}
\begin{tikzpicture}

\node[draw, shape=circle, label=left:$\tilde{t}_0$] (1) at (0,0) {};
\node[draw, shape=circle, label=left:$\tilde{t}_1$] (2) at (0,2) {};
\node[draw, shape=circle,label=above:$\tilde{s}_3$] (3) at (2,1) {};
\node[draw, shape=circle,label=above:$\tilde{s}_4$] (4) at (4,1) {};
\node[draw, shape=circle,label=above:$\tilde{s}_{n-1}$] (n-1) at (6,1) {};
\node[draw,shape=circle,label=above:$\tilde{s}_n$] (n) at (8,1) {};

\node[] (0) at (0,1) {};

\draw[thick,-] (1) to node[auto] {$e$} (2);
\draw[thick,-] (1) to (3);
\draw[thick,-] (2) to (3);
\draw[thick,-] (3) to (4);
\draw[thick,dashed,-] (4) to (n-1);
\draw[thick,-] (n-1) to (n);
\draw[double,thick,-] (3) to (0);

\end{tikzpicture}
\end{center}

\caption{Diagram for the presentation of BMR of $B(e,e,n)$.}
\end{figure}
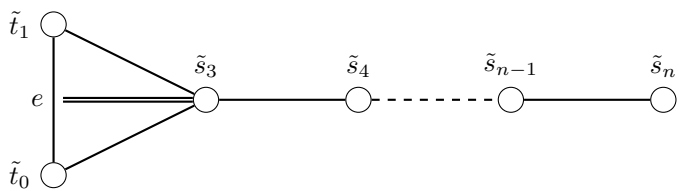

\subsection{Presentations of Corran and Picantin}\label{SubsectionOtherPresentations}

Consider the presentation of BMR of the complex braid group $B(e,e,n)$. For $e \geq 3$ and $n \geq 3$, it is shown in \cite{CorranPhD} that the monoid defined by the corresponding monoid presentation fails to embed in $B(e,e,n)$. Thus, this presentation does not give rise to a Garside structure for $B(e,e,n)$. In \cite{CorranPicantin}, Corran and Picantin described another presentation for $B(e,e,n)$ and showed that it gives rise to a Garside structure for $B(e,e,n)$. We have the following.

\begin{proposition}\label{PropositionPresCPB(een)}
The complex braid group $B(e,e,n)$ is isomorphic to a group defined by a presentation with generators $\tilde{t}_0$, $\tilde{t}_1$, $\cdots$, $\tilde{t}_{e-1}$, $\tilde{s}_3$, $\tilde{s}_4$, $\cdots$, $\tilde{s}_{n-1}$, $\tilde{s}_n$ and relations as follows.
\begin{enumerate}

\item the braid relations for $\tilde{s}_3$, $\tilde{s}_4$, $\cdots$, $\tilde{s}_{n-1}$ given earlier in Definition \ref{DefinitionClasBraidGroup},
\item $\tilde{s}_3 \tilde{t}_i \tilde{s}_3 = \tilde{t}_i \tilde{s}_3 \tilde{t}_i$ for $0 \leq i \leq e-1$,
\item $\tilde{s}_j \tilde{t}_i = \tilde{t}_i \tilde{s}_j$ for $0 \leq i \leq e-1$ and $4 \leq j \leq n$,
\item $\tilde{t}_i \tilde{t}_{i-1} = \tilde{t}_j \tilde{t}_{j-1}$ for $i,j \in \mathbb{Z}/e\mathbb{Z}$.

\end{enumerate}

\end{proposition}

This presentation is called the presentation of Corran-Picantin of $B(e,e,n)$. It can be described by the following diagram (the kite). The dashed circle describes Relations 4 of Proposition \ref{PropositionPresCPB(een)}. The other edges used to describe all the other relations follow the standard conventions for the usual braid group diagram, see Figure \ref{PresofB_nDiagram}.

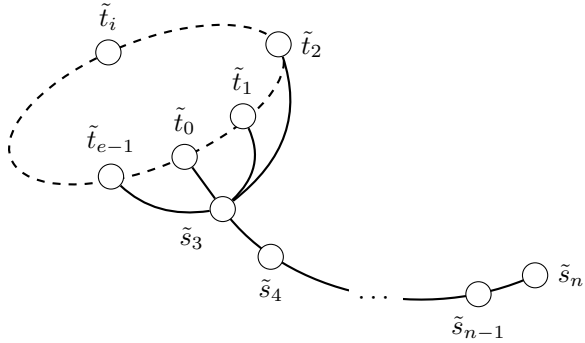
\begin{figure}[H]
\begin{center}
\begin{tikzpicture}[yscale=0.8,xscale=1,rotate=30]

\draw[thick,dashed] (0,0) ellipse (2cm and 1cm);

\node[draw, shape=circle, fill=white, label=above:$\tilde{t}_0$] (t0) at (0,-1) {};
\node[draw, shape=circle, fill=white, label=above:$\tilde{t}_1$] (t1) at (1,-0.8) {};
\node[draw, shape=circle, fill=white, label=right:$\tilde{t}_2$] (t2) at (2,0) {};
\node[draw, shape=circle, fill=white, label=above:$\tilde{t}_i$] (ti) at (0,1) {};
\node[draw, shape=circle, fill=white, label=above:$\tilde{t}_{e-1}$] (te-1) at (-1,-0.8) {};

\draw[thick,-] (0,-2) arc (-180:-90:3);

\node[draw, shape=circle, fill=white, label=below left:$\tilde{s}_3$] (s3) at (0,-2) {};

\draw[thick,-] (t0) to (s3);
\draw[thick,-,bend left] (t1) to (s3);
\draw[thick,-,bend left] (t2) to (s3);
\draw[thick,-,bend left] (s3) to (te-1);

\node[draw, shape=circle, fill=white, label=below:$\tilde{s}_4$] (s4) at (0.15,-3) {};
\node[draw, shape=circle, fill=white, label=below:$\tilde{s}_{n-1}$] (sn-1) at (2.2,-4.9) {};
\node[draw, shape=circle, fill=white, label=right:$\tilde{s}_{n}$] (sn) at (3,-5) {};

\node[fill=white] () at (1,-4.285) {$\cdots$};

\end{tikzpicture}
\end{center}

\caption{Diagram for the presentation of Corran-Picantin of $B(e,e,n)$.}\label{PresofCPBeen}
\end{figure}

One of the important results proved in \cite{CorranPicantin} is the following.

\begin{theorem}\label{TheoremCPGarside}

The presentation of Corran-Picantin gives rise to a Garside structure for $B(e,e,n)$ with

\begin{itemize}
\item Garside element: $\Delta = \underset{\Delta_2}{\underbrace{\tilde{t}_1\tilde{t}_0}}\ \underset{\Delta_3}{\underbrace{\tilde{s}_3\tilde{t}_1\tilde{t}_0\tilde{s}_3}}\ \cdots\ \underset{\Delta_n}{\underbrace{\tilde{s}_n\tilde{s}_{n-1} \cdots \tilde{s}_3\tilde{t}_1\tilde{t}_0\tilde{s}_3 \cdots \tilde{s}_n}}$,

\item Simples: the elements of the form $\delta_2 \delta_3 \cdots \delta_n$ where $\delta_i$ is a left-divisor of $\Delta_i$ (see Definition \ref{DefinitionLeftDivisorMonoid}) for $2 \leq i \leq n$.
\end{itemize}

\end{theorem}

It is also shown in \cite{CorranPicantin} that if one adds the quadratic relations for all the generators of the presentation of Corran-Picantin of $B(e,e,n)$, one obtains a presentation of a group isomorphic to $G(e,e,n)$. It is called the presentation of Corran-Picantin of $G(e,e,n)$. Its diagram is as follows.

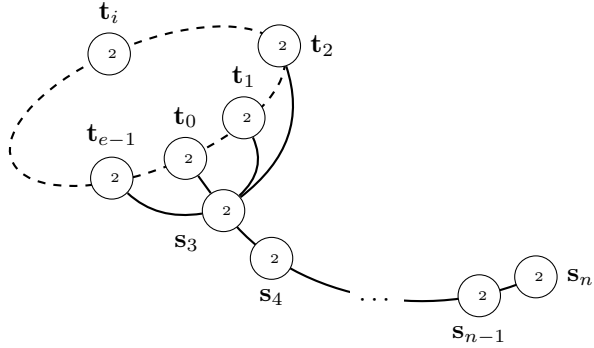
\begin{figure}[H]\label{PresofCPofGeen}
\begin{center}
\begin{tikzpicture}[yscale=0.8,xscale=1,rotate=30]

\draw[thick,dashed] (0,0) ellipse (2cm and 1cm);

\node[draw, shape=circle, fill=white, label=above:$\mathbf{t}_0$] (t0) at (0,-1) {\begin{tiny} 2 \end{tiny}};
\node[draw, shape=circle, fill=white, label=above:$\mathbf{t}_1$] (t1) at (1,-0.8) {\begin{tiny} 2 \end{tiny}};
\node[draw, shape=circle, fill=white, label=right:$\mathbf{t}_2$] (t2) at (2,0) {\begin{tiny} 2 \end{tiny}};
\node[draw, shape=circle, fill=white, label=above:$\mathbf{t}_i$] (ti) at (0,1) {\begin{tiny} 2 \end{tiny}};
\node[draw, shape=circle, fill=white, label=above:$\mathbf{t}_{e-1}$] (te-1) at (-1,-0.8) {\begin{tiny} 2 \end{tiny}};

\draw[thick,-] (0,-2) arc (-180:-90:3);

\node[draw, shape=circle, fill=white, label=below left:$\mathbf{s}_3$] (s3) at (0,-2) {\begin{tiny} 2 \end{tiny}};

\draw[thick,-] (t0) to (s3);
\draw[thick,-,bend left] (t1) to (s3);
\draw[thick,-,bend left] (t2) to (s3);
\draw[thick,-,bend left] (s3) to (te-1);

\node[draw, shape=circle, fill=white, label=below:$\mathbf{s}_4$] (s4) at (0.15,-3) {\begin{tiny} 2 \end{tiny}};
\node[draw, shape=circle, fill=white, label=below:$\mathbf{s}_{n-1}$] (sn-1) at (2.2,-4.9) {\begin{tiny} 2 \end{tiny}};
\node[draw, shape=circle, fill=white, label=right:$\mathbf{s}_{n}$] (sn) at (3,-5) {\begin{tiny} 2 \end{tiny}};

\node[fill=white] () at (1,-4.285) {$\cdots$};

\end{tikzpicture}
\end{center}

\caption{Diagram for the presentation of Corran-Picantin of $G(e,e,n)$.}
\end{figure}

\begin{remark}\

\begin{enumerate}

\item \mbox{For $e=1$ and $n \geq 2$, we get the classical presentation of the symmetric group $S_n$.}
\item For $e=2$ and $n \geq 2$, we get the classical presentation of the Coxeter group of type $D_n$.

\end{enumerate}

\end{remark}

Denote by $\mathbf{X}$ the set $\{ \mathbf{t}_0, \mathbf{t}_1, \cdots, \mathbf{t}_{e-1}, \mathbf{s}_3, \cdots, \mathbf{s}_n \}$ of the generators of the presentation of Corran-Picantin of $G(e,e,n)$. The matrices in $G(e,e,n)$ that correspond to the generators are given by $\mathbf{t}_i \longmapsto t_i:= \begin{pmatrix}

0 & \zeta_{e}^{-i} & 0\\
\zeta_{e}^{i} & 0 & 0\\
0 & 0 & I_{n-2}\\

\end{pmatrix}$ \mbox{for $0 \leq i \leq e-1$} and $\mathbf{s}_j \longmapsto s_j:= \begin{pmatrix}

I_{j-2} & 0 & 0 & 0\\
0 & 0 & 1 & 0\\
0 & 1 & 0 & 0\\
0 & 0 & 0 & I_{n-j}\\

\end{pmatrix}$ for $3 \leq j \leq n$, where $\zeta_e$ is the $e$-th root of unity that is equal to $exp(2i\pi/e)$ and $I_k$ is the identity matrix for $1 \leq k \leq n$. Denote by $X$ the set $\{t_0,t_1, \cdots, t_{e-1},s_3, \cdots, s_n\}$.\\

To avoid confusion, we use regular letters for matrices in $G(e,e,n)$ and bold letters for words over $\mathbf{X}$. We also set the following convention.

\begin{convention}\label{ConventionDecreaseIncreaseIndex}

A decreasing-index expression of the form $\mathbf{s}_i \mathbf{s}_{i-1} \cdots \mathbf{s}_{i'}$ is the empty word when $i < i'$ and an increasing-index expression of the form $\mathbf{s}_i \mathbf{s}_{i+1} \cdots \mathbf{s}_{i'}$ is the empty word when $i > i'$. Similarly, in $G(e,e,n)$, a decreasing-index product of the form $s_i s_{i-1} \cdots s_{i'}$ is equal to $I_n$ when $i < i'$ and an increasing-index product of the form $s_i s_{i+1} \cdots s_{i'}$ is equal to $I_n$ when $i > i'$, where $I_n$ is the identity $n \times n$ matrix.

\end{convention}

\section{Geodesic normal forms for \texorpdfstring{$G(e,e,n)$}{TEXT}}\label{SectionLength}

In this section, we define geodesic normal forms for the elements of the general series of complex reflection groups $G(e,e,n)$ by using the presentation of Corran-Picantin of these groups. Actually, we define an algorithm that produces a word representative for each element of $G(e,e,n)$ over $\mathbf{X}$, where $\mathbf{X}$ is the set of the generators of the presentation of Corran-Picantin. Then we prove that these word representatives are geodesic. Hence we get geodesic normal forms for $G(e,e,n)$. As an application, we determine the elements of $G(e,e,n)$ that are of maximal length over $\mathbf{X}$.

\subsection{Minimal word representatives}

Denote by $\boldsymbol{\ell}(\boldsymbol{w})$ the word length over $\mathbf{X}$ of the word $\boldsymbol{w} \in \mathbf{X}^{*}$, where $\mathbf{X}$ is the set of the generators of the presentation of Corran-Picantin. Let us start by the following definition.

\begin{definition}\label{def.Slength.reducedword}
Let $w$ be an element of $G(e,e,n)$.
We define $\ell(w)$ to be the minimal word length $\boldsymbol{\ell}(\boldsymbol{w})$ of a word $\boldsymbol{w}$ over $\mathbf{X}$ that represents $w$. A reduced expression of $w$ is any word representative of $w$ of word length $\ell(w)$.
\end{definition}

Our aim is to represent each element of $G(e,e,n)$ by a reduced word over $\mathbf{X}$. This requires the elaboration of a combinatorial technique to determine a reduced expression decomposition over $\mathbf{X}$ for an element of $G(e,e,n)$.\\

We introduce the algorithm below (see next page) that produces a word $R\!E(w)$ over $\mathbf{X}$ for a given matrix $w$ in $G(e,e,n)$. Note that we use Convention \ref{ConventionDecreaseIncreaseIndex} in the elaboration of the algorithm. Later on, we prove that $R\!E(w)$ is a reduced expression over $\mathbf{X}$ of $w$, see Proposition \ref{PropREwRedExp}.\\

Let $w_n := w \in G(e,e,n)$. For $i$ from $n$ to $2$, the $i$-th step of the algorithm transforms the block diagonal matrix $\left(
\begin{array}{c|c}
w_i & 0 \\
\hline
0 & I_{n-i}
\end{array}
\right)$ into a block diagonal matrix $\left(
\begin{array}{c|c}
w_{i-1} & 0 \\
\hline
0 & I_{n-i+1}
\end{array}
\right) \in G(e,e,n)$ with $w_1 = 1$. Actually, for $2 \leq i \leq n$, there exists a unique $c$ with $1 \leq c \leq n$ such that $w_i[i,c] \neq 0$. At each step $i$ of the algorithm, if $w_i[i,c] =1$, we shift it into the diagonal position $[i,i]$ by right multiplication by transpositions of the symmetric group $S_n$. If $w_i[i,c] \neq 1$, we shift it into the first column by right multiplication by transpositions, transform it into $1$ by right multiplication by an element of $\{t_0, t_1, \cdots, t_{e-1} \}$, and then shift the $1$ obtained into the diagonal position $[i,i]$.\\ 

\begin{algorithm}[]\label{Algo1}

\SetKwInOut{Input}{Input}\SetKwInOut{Output}{Output}

\noindent\rule{12cm}{0.5pt}

\Input{$w$, a matrix in $G(e,e,n)$, with $e \geq 1$ and $n \geq 2$.}
\Output{$R\!E(w)$, a word over $\mathbf{X}$.}

\noindent\rule{12cm}{0.5pt}

\textbf{Local variables}: $w'$, $R\!E(w)$,  $i$, $U$, $c$, $k$.

\noindent\rule{12cm}{0.5pt}

\textbf{Initialisation}:
$U:=[1,\zeta_e,\zeta_{e}^2,...,\zeta_{e}^{e-1}]$, $s_2 := t_0$, $\mathbf{s}_2 := \mathbf{t}_0$,\\
$R\!E(w) := \varepsilon$: the empty word, $w' := w$.

\noindent\rule{12cm}{0.5pt}

\For{$i$ \textbf{from} $n$ \textbf{down to} $2$} {
	$c:=1$; $k:=0$; \\
	\While{$w'[i,c] = 0$}{$c :=c+1$\; 
	}
	 \textit{\#Then $w'[i,c]$ is the root of unity on the row $i$}\;
	\While{$U[k+1]\neq w'[i,c] $}{$k :=k+1$\;
	}
	\textit{\#Then $w'[i,c] = \zeta_{e}^k$.}\\

		\If{$k \neq 0$}{
		$w' := w's_{c}s_{c-1} \cdots s_{3}s_{2}t_{k}$; \textit{\#Then $w'[i,2] =1$}\;
		$R\!E(w) := \mathbf{t}_k\mathbf{s}_2\mathbf{s}_3 \cdots \mathbf{s}_c R\!E(w)$\;
		$c:=2$\;
	}	
	$w' := w's_{c+1} \cdots s_{i-1} s_{i}$; \textit{\#Then $w'[i,i] = 1$}\;
	$R\!E(w) := \mathbf{s}_i \mathbf{s}_{i-1} \cdots \mathbf{s}_{c+1} R\!E(w)$\;
}
\textbf{Return} $R\!E(w)$;

\noindent\rule{12cm}{0.5pt}

\caption{A word over $\mathbf{X}$ corresponding to an element $w \in G(e,e,n)$.}
\end{algorithm}

We provide two examples in order to better understand the algorithm. The first one is for an element $w$ of $G(3,3,4)$ and the second example is for an element $w$ of $G(2,2,4)$, that is the Coxeter group of type $D_4$. At each step, we indicate the values of $i$, $k$, and $c$ such that $w_i[i,c] = \zeta_e^k$.

\begin{example}\label{examp algo NF}

We apply the algorithm to $w := \begin{pmatrix}

0 & 0 & 0 & 1\\
0 & \zeta_3^2 & 0 & 0\\
0 & 0 & \zeta_3 & 0\\
1 & 0 & 0 & 0\\

\end{pmatrix}$ $\in G(3,3,4)$.\\
Step $1$ $(i=4, k=0, c=1)$: $w':=ws_2s_3s_4=\begin{pmatrix}

0 & 0 & 1 & 0\\
\zeta_3^2 & 0 & 0 & 0\\
0 & \boxed{\zeta_3} & 0 & 0\\
0 & 0 & 0 & \mathbf{1}\\

\end{pmatrix}$.\\
Step $2$ $(i=3, k=1, c=2)$: $w' := w's_2 = \begin{pmatrix}

0 & 0 & 1 & 0\\
0 & \zeta_3^2 & 0 & 0\\
\zeta_3 & 0 & 0 & 0\\
0 & 0 & 0 & 1\\

\end{pmatrix}$,\\
then $w' := w't_1 = \begin{pmatrix}

0 & 0 & 1 & 0\\
1 & 0 & 0 & 0\\
0 & 1 & 0 & 0\\
0 & 0 & 0 & 1\\

\end{pmatrix}$, then $w':=w's_3 = \begin{pmatrix}

0 & 1 & 0 & 0\\
\boxed{1} & 0 & 0 & 0\\
0 & 0 & \mathbf{1} & 0\\
0 & 0 & 0 & 1\\

\end{pmatrix}$.\\
Step $3$ $(i=2, k=0, c=1)$: $w' := w's_2 = I_{4}$.\\
Hence $R\!E(w) = \mathbf{s}_2 \mathbf{s}_3 \mathbf{t}_1 \mathbf{s}_2 \mathbf{s}_4 \mathbf{s}_3 \mathbf{s}_2$. Recall that $\mathbf{s_2} = \mathbf{t}_0$. Thus, $R\!E(w) = \mathbf{t}_0 \mathbf{s}_3 \mathbf{t}_1 \mathbf{t}_0 \mathbf{s}_4 \mathbf{s}_3 \mathbf{t}_0$.\\

\end{example}

\begin{example}

We apply the algorithm to $w := \begin{pmatrix}

0 & 1 & 0 & 0\\
0 & 0 & 0 & -1\\
0 & 0 & 1 & 0\\
-1 & 0 & 0 & 0\\

\end{pmatrix}$ $\in G(2,2,4)$.\\
Step $1$ $(i=4, k=1, c=1)$: $w':=wt_1=\begin{pmatrix}

-1 & 0 & 0 & 0\\
0 & 0 & 0 & -1\\
0 & 0 & 1 & 0\\
0 & 1 & 0 & 0\\

\end{pmatrix}$,\\ then $w':=w' s_3s_4=\begin{pmatrix}

-1 & 0 & 0 & 0\\
0 & 0 & -1 & 0\\
0 & 1 & 0 & 0\\
0 & 0 & 0 & 1\\

\end{pmatrix}$,\\
Step $2$ $(i=3, k=0, c=2)$: $w' := w's_3 = \begin{pmatrix}

-1 & 0 & 0 & 0\\
0 & -1 & 0 & 0\\
0 & 0 & 1 & 0\\
0 & 0 & 0 & 1\\

\end{pmatrix}$.\\
Step $3$ $(i=2, k=1, c=2)$: $w' := w's_2 = \begin{pmatrix}

0 & -1 & 0 & 0\\
-1 & 0 & 0 & 0\\
0 & 0 & 1 & 0\\
0 & 0 & 0 & 1\\

\end{pmatrix}$,\\ then $w' := w' t_1 = I_4$.\\
Hence $R\!E(w) = \mathbf{t}_1 \mathbf{s}_2 \mathbf{s}_3 \mathbf{s}_4 \mathbf{s}_3 \mathbf{t}_1 = \mathbf{t}_1 \mathbf{t}_0 \mathbf{s}_3 \mathbf{s}_4 \mathbf{s}_3 \mathbf{t}_1$ \emph{(since $\mathbf{s}_2 = \mathbf{t}_0$)}.\\

\end{example}

\begin{remark}

Let $w$ be an element of $G(e,e,2)$, that is the dihedral group $I_2(e)$. Denote by $\varepsilon$ the empty word. By the algorithm and by assuming Convention \ref{ConventionDecreaseIncreaseIndex}, $R\!E(w)$ belongs to $\{\varepsilon, \mathbf{t}_0, \mathbf{t}_1, \cdots, \mathbf{t}_{e-1}, \mathbf{t}_1\mathbf{t}_0, \mathbf{t}_2\mathbf{t}_0, \cdots, \mathbf{t}_{e-1}\mathbf{t}_0 \}$.

\end{remark}

The next lemma follows directly from the algorithm. 

\begin{lemma}\label{LemmaBlocks}

For $2 \leq i \leq n$, suppose $w_i[i,c] \neq 0$. The block $w_{i-1}$ is obtained by
\begin{itemize}

\item removing the row $i$ and the column $c$ from $w_i$, then by

\item multiplying the first column of the new matrix by $w_i[i,c]$.\\

\end{itemize}

\end{lemma}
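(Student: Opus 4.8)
The plan is to read off step $i$ of Algorithm \ref{Algo1} as a sequence of \emph{column operations}, since every generator is applied on the right of $w'$. Concretely, right multiplication by $s_j$ (for $3 \le j \le n$, and by $s_2 = t_0$) interchanges columns $j-1$ and $j$, while right multiplication by $t_k$ sends the ordered pair of columns $(\mathrm{col}_1,\mathrm{col}_2)$ to $(\zeta_e^{k}\,\mathrm{col}_2,\ \zeta_e^{-k}\,\mathrm{col}_1)$ and fixes all other columns; both facts follow immediately from the displayed matrices for $s_j$ and $t_k$. Throughout the step $w'$ stays in $G(e,e,n)$, hence remains monomial, so I can track the unique nonzero entry of row $i$ simply by tracking the column that carries it, and I only need to read off where each column of $w_i$ ends up.

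First I would treat the case $k=0$, where $w_i[i,c]=1$ and step $i$ performs only $w' := w'\,s_{c+1}\cdots s_i$. Composing these transpositions, I check that the net column permutation sends column $c$ to position $i$ and shifts columns $c+1,\dots,i$ back to positions $c,\dots,i-1$, fixing columns $1,\dots,c-1$. Thus column $j$ of $w'$ equals column $j$ of $w_i$ for $j\le c-1$, column $j$ equals column $j+1$ of $w_i$ for $c\le j\le i-1$, and column $i$ equals column $c$ of $w_i$. Since the unique nonzero entry of column $c$ of $w_i$ is the $1$ in row $i$, that column is the $i$-th standard basis vector $e_i$, so both row $i$ and column $i$ of the result are $e_i$; the surviving top-left $(i-1)\times(i-1)$ block $w_{i-1}$ therefore consists exactly of the listed columns with row $i$ deleted. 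This is precisely $w_i$ with row $i$ and column $c$ removed (the scaling by $w_i[i,c]=1$ being trivial), confirming the lemma in this case.

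For $k\neq 0$ I would run the same bookkeeping on the full product $w' := w'\,s_c s_{c-1}\cdots s_2\,t_k\,s_3 s_4\cdots s_i$ (recall $c$ is reset to $2$ before the final sweep). The block $s_c\cdots s_2$ cyclically moves column $c$ into position $1$ and pushes columns $1,\dots,c-1$ up by one; then $t_k$ returns the image of column $1$ of $w_i$ to position $1$ scaled by $\zeta_e^{k}$ and moves the copy of column $c$ (carrying the entry $\zeta_e^{k}$ in row $i$) into position $2$ scaled by $\zeta_e^{-k}$; finally $s_3\cdots s_i$ slides that column into position $i$. Tracing the composite shows that column $1$ becomes $\zeta_e^{k}\cdot(\text{column }1\text{ of }w_i)$, column $j$ equals column $j$ of $w_i$ for $2\le j\le c-1$, column $j$ equals column $j+1$ of $w_i$ for $c\le j\le i-1$, and column $i$ equals $\zeta_e^{-k}\cdot(\text{column }c\text{ of }w_i)=e_i$. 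Again row $i$ and column $i$ are $e_i$, and comparing the remaining columns with ``delete row $i$ and column $c$ from $w_i$, then scale the new first column by $w_i[i,c]=\zeta_e^{k}$'' yields an exact match.

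The only genuine work is the index bookkeeping in the two composite permutations $s_c\cdots s_2$ and $s_3\cdots s_i$ (respectively $s_{c+1}\cdots s_i$), together with checking that the $t_k$ scaling lands on the first column rather than elsewhere — which is why the statement can be called straightforward. The cleanest way to avoid ad hoc index chasing is to establish the two permutation identities by induction on the number of transpositions, and then to invoke the monomial structure (each column has a unique nonzero entry) to conclude that the deleted column $c$ equals $w_i[i,c]\,e_i$; this is exactly what makes the deletion and the first-column scaling interact as claimed, uniformly in both cases.
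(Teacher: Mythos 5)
Your proof is correct and is exactly the verification the paper leaves implicit: the lemma is stated without proof (the paper calls it straightforward right after describing the same shift-into-position right multiplications), and your column-operation bookkeeping for $s_j$ and $t_k$ carries that description out in full. One cosmetic caveat: when $c=1$ the block $s_c\cdots s_2$ is empty and the $\zeta_e^k$-scaling lands on the original column $2$ rather than column $1$ of $w_i$ --- that is, on ``the first column of the new matrix'' exactly as the lemma phrases it --- so your summary formula for the $k\neq 0$ case should be read with that degenerate case in mind.
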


\begin{example}

Let $w$ be as in Example \ref{examp algo NF}, where $n = 4$. The block $w_{3}$ is obtained by removing the row number $4$ and first column from $w_4 = w$ to obtain $\begin{pmatrix}

0 & 0 & 1\\
\zeta_3^2 & 0 & 0\\
0 & \zeta_3 & 0\\

\end{pmatrix}$, then by multiplying the first column of this matrix by $1$. The same can be said for the other block $w_2$.

\end{example}

\begin{definition}\label{DefREiw}

Let $2 \leq i \leq n$. Denote by $w_i[i,c]$ the unique nonzero entry on the row $i$ with $1 \leq c \leq i$.

\begin{itemize}

\item If $w_{i}[i,c] =1$, we define $R\!E_{i}(w)$ to be the word \\ 
$\mathbf{s}_i \mathbf{s}_{i-1} \cdots \mathbf{s}_{c+1}$ (decreasing-index expression).
\item If $w_{i}[i,c] = \zeta_{e}^{k}$ with $k \neq 0$, we define $R\!E_{i}(w)$ to be the word \\
			\begin{tabular}{ll}
			$\mathbf{s}_i \cdots \mathbf{s}_3 \mathbf{t}_k$ & if $c=1$,\\
			$\mathbf{s}_i \cdots \mathbf{s}_3 \mathbf{t}_k \mathbf{t}_0$  & if $c=2$,\\
			$\mathbf{s}_i \cdots \mathbf{s}_3 \mathbf{t}_k \mathbf{t}_0 \mathbf{s}_3 \cdots \mathbf{s}_c$ & if $c \geq 3$.
			\end{tabular}

\end{itemize}

\end{definition}

Remark that for $3 \leq i \leq n$, the word $R\!E_{i}(w)$ is either the empty word (when $w_{i}[i,i] =1$, see Convention \ref{ConventionDecreaseIncreaseIndex}) or a word that contains $\mathbf{s}_{i}$ necessarily but does not contain any of $\mathbf{s}_{i+1},\mathbf{s}_{i+2}, \cdots , \mathbf{s}_n$. Remark also that for $i = 2$, by using Convention \ref{ConventionDecreaseIncreaseIndex}, we have $R\!E_2(w) \in \{\varepsilon, \mathbf{t}_0, \mathbf{t}_1, \cdots, \mathbf{t}_{e-1}, \mathbf{t}_1\mathbf{t}_0, \cdots, \mathbf{t}_{e-1}\mathbf{t}_0\}$.

\begin{lemma}

We have $R\!E(w) = R\!E_2(w) R\!E_3(w) \cdots R\!E_n(w)$.

\end{lemma}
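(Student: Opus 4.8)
The plan is to show that the single word accumulated in the variable $R\!E(w)$ by Algorithm~\ref{Algo1} agrees, step by step, with the product $R\!E_2(w)R\!E_3(w)\cdots R\!E_n(w)$ of the blocks of Definition~\ref{DefREiw}. The structural fact that makes this work is that the \texttt{for} loop runs over $i$ from $n$ down to $2$ while, at every iteration, it only \emph{left}-multiplies (prepends) letters onto $R\!E(w)$. Consequently, even though $i$ decreases, the blocks appear in increasing order in the final word. So the whole task reduces to identifying the letters prepended during iteration $i$ with the word $R\!E_i(w)$, after which a short induction on the number of completed iterations finishes the argument.

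First I would fix an iteration $i$ and record exactly what is prepended. Write $c_0$ for the value of $c$ computed at the start of the iteration, i.e. the unique column with $w_i[i,c_0]\neq 0$ (existence and uniqueness come from the discussion preceding Lemma~\ref{LemmaBlocks}), and let $k$ be the exponent with $w_i[i,c_0]=\zeta_e^{k}$. If $k=0$ the \texttt{if} block is skipped, $c$ keeps the value $c_0$, and the only prepend is $R\!E(w):=\mathbf{s}_i\cdots\mathbf{s}_{c_0+1}\,R\!E(w)$, which is precisely $R\!E_i(w)$ in the case $w_i[i,c]=1$. If $k\neq 0$, the iteration performs two prepends, first $\mathbf{t}_k\mathbf{s}_2\mathbf{s}_3\cdots\mathbf{s}_{c_0}$ and then, after $c$ is reset to $2$, the factor $\mathbf{s}_i\cdots\mathbf{s}_{3}$; hence the letters prepended during the whole iteration form the word $\mathbf{s}_i\cdots\mathbf{s}_3\,\mathbf{t}_k\,(\mathbf{s}_2\mathbf{s}_3\cdots\mathbf{s}_{c_0})$, where the parenthesised factor is read as the empty word when $c_0=1$.

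Next I would rewrite this word using the initialisation $\mathbf{s}_2:=\mathbf{t}_0$ together with the convention that $\mathbf{s}_a\cdots\mathbf{s}_b$ is empty for $a>b$, and split according to $c_0$. For $c_0\geq 3$ the parenthesised factor is $\mathbf{t}_0\mathbf{s}_3\cdots\mathbf{s}_{c_0}$, giving $\mathbf{s}_i\cdots\mathbf{s}_3\mathbf{t}_k\mathbf{t}_0\mathbf{s}_3\cdots\mathbf{s}_{c_0}$; for $c_0=2$ it is the single letter $\mathbf{s}_2=\mathbf{t}_0$, giving $\mathbf{s}_i\cdots\mathbf{s}_3\mathbf{t}_k\mathbf{t}_0$; and for $c_0=1$ it is empty, giving $\mathbf{s}_i\cdots\mathbf{s}_3\mathbf{t}_k$. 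These are exactly the three subcases listed in Definition~\ref{DefREiw}, so in every case the word prepended at iteration $i$ equals $R\!E_i(w)$.

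Finally I would conclude by induction on the iterations. Since $R\!E(w)$ starts as the empty word and each iteration $i$ replaces its current value by $R\!E_i(w)\,R\!E(w)$, after the iterations $i=n,n-1,\dots,m$ the variable holds $R\!E_m(w)R\!E_{m+1}(w)\cdots R\!E_n(w)$; taking $m=2$ yields the claim. The only real care needed is the bookkeeping at the two boundary values $c_0\in\{1,2\}$ and the reset $c:=2$, matched against the empty-product conventions in Definition~\ref{DefREiw}. I expect this case-matching to be the sole obstacle, and it is entirely routine: there is no algebraic or geometric content beyond unwinding the algorithm.
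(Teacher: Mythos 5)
Your proof is correct and takes essentially the same route as the paper's, which disposes of the lemma in one sentence by observing that the output of Algorithm \ref{Algo1} is the concatenation of the words produced at each step $i$ from $n$ down to $2$. Your additional bookkeeping --- checking that the letters prepended at iteration $i$ (including the two-prepend structure when $k \neq 0$, the reset $c:=2$, the convention $\mathbf{s}_2 = \mathbf{t}_0$, and the empty-word boundary cases $c_0 \in \{1,2\}$) match the three subcases of Definition \ref{DefREiw} exactly --- merely makes explicit what the paper treats as true by construction, since $R\!E_i(w)$ is defined precisely to record what step $i$ of the algorithm prepends.
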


\begin{proof}

The output $R\!E(w)$ of the algorithm is a concatenation of the words\\ $R\!E_2(w), R\!E_3(w), \cdots$, and $R\!E_n(w)$ obtained at each step $i$ from $n$ to $2$ of the algorithm.

\end{proof}

\begin{example}

If $w$ is defined as in Example \ref{examp algo NF}, we have\\
$R\!E(w) = \underset{R\!E_{2}(w)}{\underbrace{\mathbf{t}_0}} \hspace{0.2cm} \underset{R\!E_{3}(w)}{\underbrace{\mathbf{s}_3\mathbf{t}_1\mathbf{t}_0}} \hspace{0.2cm} \underset{R\!E_{4}(w)}{\underbrace{ \mathbf{s}_4\mathbf{s}_3\mathbf{t}_0}}$.

\end{example}

\begin{proposition}

The word $R\!E(w)$ given by the algorithm is a word representative over $\mathbf{X}$ of $w \in G(e,e,n)$.

\end{proposition}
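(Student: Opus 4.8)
The plan is to track a single loop invariant through the execution of Algorithm \ref{Algo1}: the product of the running matrix $w'$ with the element of $G(e,e,n)$ represented by the partial word accumulated in $R\!E(w)$. Write $\overline{\boldsymbol{v}}$ for the image in $G(e,e,n)$ of a word $\boldsymbol{v}$ over $\mathbf{X}$ under the isomorphism $\mathbf{t}_i \mapsto t_i$, $\mathbf{s}_j \mapsto s_j$ fixed in the previous section. I claim that at every moment during the run one has
$$ w' \cdot \overline{R\!E(w)} = w. $$
This holds at initialisation, where $w' = w$ and $R\!E(w) = \varepsilon$, so the left-hand side is $w \cdot 1 = w$.

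Next I would check that each update preserves the invariant. There are exactly two kinds of updates performed by the algorithm: type (i), $w' := w' s_c \cdots s_2 t_k$ coupled with $R\!E(w) := \mathbf{t}_k \mathbf{s}_2 \cdots \mathbf{s}_c\, R\!E(w)$; and type (ii), $w' := w' s_{c+1} \cdots s_i$ coupled with $R\!E(w) := \mathbf{s}_i \cdots \mathbf{s}_{c+1}\, R\!E(w)$ (recall $s_2 = t_0$ and $\mathbf{s}_2 = \mathbf{t}_0$, so the boundary case $c = 2$ is subsumed). The key algebraic point, which I would isolate first, is that every generator in $X$ is an involution, since $G(e,e,n)$ satisfies the quadratic relations $t_i^2 = s_j^2 = 1$. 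Consequently the prepended word represents, in each case, precisely the inverse of the matrix by which $w'$ was right-multiplied: $\overline{\mathbf{t}_k \mathbf{s}_2 \cdots \mathbf{s}_c} = t_k s_2 \cdots s_c = (s_c \cdots s_2 t_k)^{-1}$ and $\overline{\mathbf{s}_i \cdots \mathbf{s}_{c+1}} = s_i \cdots s_{c+1} = (s_{c+1} \cdots s_i)^{-1}$. Writing $g$ for the right-multiplier, an update therefore sends $w' \cdot \overline{R\!E(w)}$ to $(w'g)\bigl(g^{-1}\,\overline{R\!E(w)}\bigr) = w' \cdot \overline{R\!E(w)}$, leaving the invariant unchanged. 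The $k = 0$ branch performs only an update of type (ii), and the $k \neq 0$ branch performs one of each in succession, so both branches are covered.

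To finish, I would invoke the description of the algorithm given just before Lemma \ref{LemmaBlocks}: the $i$-th pass turns the block-diagonal matrix with active block $w_i$ into the block-diagonal matrix with active block $w_{i-1}$ and an enlarged identity corner, and the process terminates with $w_1 = I_1$, that is, with $w' = I_n$. Substituting $w' = I_n$ into the invariant yields $\overline{R\!E(w)} = w$, which is exactly the assertion that $R\!E(w)$ is a word representative over $\mathbf{X}$ of $w$.

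The only step calling for genuine care, rather than routine bookkeeping, is the verification that each prepended word equals the inverse of its corresponding right-multiplier; this is where the involutivity of the generators is indispensable, and it is also the reason the generators are prepended in reversed order. Everything else — termination of the loop at $I_n$ and the block reduction $w_i \rightsquigarrow w_{i-1}$ — is already supplied by Lemma \ref{LemmaBlocks} and the discussion preceding it, so I would cite those results rather than re-derive them.
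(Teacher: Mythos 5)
Your proposal is correct and takes essentially the same route as the paper: the paper's two-sentence proof simply observes that the algorithm transforms $w$ into $I_n$ by right multiplication by elements of $X$ while $R\!E(w)$ records the corresponding generators in reverse order, which is exactly your invariant $w' \cdot \overline{R\!E(w)} = w$ made explicit (with the involutivity of the generators, used tacitly in the paper, spelled out). Your version is a more careful formalization of the same argument, not a different one.
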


\begin{proof}

The algorithm transforms the matrix $w$ into $I_n$ by multiplying it on the right by elements of $X$. We get $wx_1 \cdots x_r = I_n$, where $x_1$, $\cdots$, $x_r$ are elements of $X$. Hence $w = x_r^{-1} \cdots x_1^{-1} = x_r \cdots x_1$ since $x_i^2 = 1$ for all $x_i \in X$. The output $R\!E(w)$ of the algorithm is $R\!E(w) = \mathbf{x}_r \cdots \mathbf{x}_1$. Hence it is a word representative over $\mathbf{X}$ of $w \in G(e,e,n)$.

\end{proof}

The following proposition will prepare us to prove that the output of the algorithm is a reduced expression over $\mathbf{X}$ of a given element $w \in G(e,e,n)$.

\begin{proposition}\label{prop.lengthcomp}

Let $w$ be an element of $G(e,e,n)$. For all $x \in X$, we have\\
$$|\boldsymbol{\ell}(R\!E(xw)) - \boldsymbol{\ell}(R\!E(w))| = 1.$$

\end{proposition}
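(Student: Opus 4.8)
My plan is to extract from Definition \ref{DefREiw} a closed combinatorial expression for $\boldsymbol{\ell}(R\!E(w))$ and then show that left multiplication by a generator perturbs it in a tightly controlled, essentially local way. Represent $w$ as a monomial matrix whose row $i$ has its unique nonzero entry $\zeta_e^{b_i}$ in column $\pi(i)$, and write $\boldsymbol{\ell}(R\!E(w)) = \sum_{i=2}^n f_i(w)$, where by Definition \ref{DefREiw} the $i$-th summand is $f_i(w) = i - c_i$ when $k_i = 0$ and $f_i(w) = i + c_i - 2$ when $k_i \neq 0$ (this last formula uniformly covers the three sub-cases $c \ge 3$, $c = 2$, $c = 1$). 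Each $f_i$ depends only on the pair $(c_i,k_i)$ read off row $i$ of the block $w_i$. The whole difficulty is that the data $(c_i,k_i)$ for the lower steps are produced recursively through Lemma \ref{LemmaBlocks}, so I must understand how left multiplication propagates through the block reduction.

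The main structural device is an induction on $n$, organised around which rows the generator $x$ moves. If $x \in \{t_0,\dots,t_{e-1}\} \cup \{s_3,\dots,s_{n-1}\}$, then $x$ acts only on rows $\le n-1$ and in particular fixes row $n$ of $w$; hence $c_n$, $k_n$, and therefore $f_n$ and the word $R\!E_n$, are identical for $w$ and $xw$. I then check that passing to the block commutes with $x$, namely that $(xw)_{n-1} = x' \, w_{n-1}$ where $x'$ is the corresponding generator of $G(e,e,n-1)$. This rests on Lemma \ref{LemmaBlocks} together with the observation that deleting row $n$ and scaling the first column are a row deletion and a right multiplication by a diagonal matrix respectively, both of which commute with the left action of $x$ on rows $\le n-1$; the scaling factor $\zeta_e^{k_n}$ and the deleted column $\pi(n)$ are moreover the same for $w$ and $xw$. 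Granting this, $\boldsymbol{\ell}(R\!E(xw)) - \boldsymbol{\ell}(R\!E(w))$ equals the analogous difference for $w_{n-1}$ in $G(e,e,n-1)$, which is $\pm 1$ by induction. The base case $n = 2$ is the dihedral group $G(e,e,2)$, whose elements have length $0$ (identity), $1$ (the reflections $t_k$), or $2$ (the nontrivial rotations $\mathrm{diag}(\zeta_e^a,\zeta_e^{-a})$); a direct inspection of the products $t_k \cdot w$ settles the claim there.

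The one generator not covered by this reduction is $x = s_n$, which swaps rows $n-1$ and $n$ and hence alters the first two steps of Algorithm \ref{Algo1}; this is where I expect the real work. The key lemma to prove is that the reduced block $w_{n-2}$ is unchanged by this swap: although the columns scaled at steps $n$ and $n-1$ may differ, the total scalar funnelled into the surviving rows is $\zeta_e^{b_{n-1}+b_n}$ in either order, and it reaches the same minimal remaining column, so $s_n$ affects only steps $n$ and $n-1$. It then remains to compare $f_n + f_{n-1}$ before and after the swap. This is a finite case analysis on the order of the columns $\pi(n-1),\pi(n)$ of the two moved rows relative to the minimal column among the lower rows, and on whether the relevant exponents vanish; in each case the two contributions recombine so that $f_n + f_{n-1}$ changes by exactly $\pm 1$, mirroring the classical fact that an adjacent transposition changes the inversion count of the underlying permutation by one.

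I expect the obstacle to be concentrated entirely in the $s_n$ case. First, establishing the exponent-funnelling symmetry that keeps $w_{n-2}$ fixed requires checking that the accumulated scalar lands on the correct column irrespective of the order of deletion, in particular when the column deleted at step $n-1$ is the one scaled at step $n$. Second, the case analysis for $f_n + f_{n-1}$ must be kept honest, because the value $k_{n-1}$ can absorb the scalar $\zeta_e^{b_n}$ pushed in at step $n$ and thereby flip the ``$k_{n-1} = 0$ versus $k_{n-1} \neq 0$'' alternative, which is precisely the mechanism that must conspire to produce a net change of $\pm 1$ rather than $\pm 3$. All the other generators are, by contrast, disposed of cleanly by the inductive commutation argument.
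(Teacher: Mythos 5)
Your proposal is correct, and its computational core coincides with the paper's, but the scaffolding is genuinely different. The paper proves Proposition \ref{prop.lengthcomp} by a direct local analysis: for $x=s_i$ it asserts (citing Definition \ref{DefREiw} and Lemma \ref{LemmaBlocks}, with no separate lemma) that $R\!E_j(xw)=R\!E_j(w)$ for $j\geq i+1$ and $j\leq i-2$, assumes $c_{i-1}<c_i$ without loss of generality using $s_i^2w=w$, and then computes $\boldsymbol{\ell}\left(R\!E_{i-1}(\cdot)R\!E_i(\cdot)\right)$ for $w$ and $xw$ in four subcases according to whether the block entries $a_i$ and $b_{i-1}$ equal $1$; for $x=t_i$ it compares only $\boldsymbol{\ell}(R\!E_2)$, again by cases on $c_1\lessgtr c_2$. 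Your induction on $n$ repackages exactly this locality: the commutation $(xw)_{n-1}=x'\,w_{n-1}$ for generators fixing row $n$ is a clean, easily checked statement (row operations on rows $\leq n-1$ commute with deleting row $n$ and column $c_n$, and with the right multiplication by $\mathrm{diag}(\zeta_e^{k_n},1,\dots,1)$ that scales the first column) which subsumes the paper's unproved claim that the distant steps of Algorithm \ref{Algo1} are untouched; it funnels every $s_j$ into the single hard case of the top swap and every $t_i$ into the dihedral base $G(e,e,2)$, which is the paper's $R\!E_2$ analysis in disguise. Your funnelling lemma ($w_{n-2}$ invariant under the swap, since in either processing order the total scalar $a_{n-1}a_n$ lands on the minimal surviving column --- if the column deleted second is not that minimum, both scalars hit the same column; if it is, they compose) is correct and is used only implicitly in the paper, so isolating it is a genuine gain in rigor. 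What you defer --- verifying that $f_n+f_{n-1}$ changes by exactly $\pm1$ in each configuration --- is precisely where the paper spends its effort, and its computed subcases confirm your expectations, sign included ($\boldsymbol{\ell}_2=\boldsymbol{\ell}_1+1$ when $a_i=1$, $\boldsymbol{\ell}_2=\boldsymbol{\ell}_1-1$ when $a_i\neq1$, under $c<c'$), as does the absorption phenomenon you flag: the paper writes $b_{i-1}$ instead of $a_{i-1}$ for exactly this reason. Two economies you could borrow or contribute: since $s_n^2w=w$, fixing the column order of the two moved rows once and reading the opposite sign off the same table halves your case analysis, as in the paper; conversely your uniform formula $f_i=i+c_i-2$ (for $k_i\neq0$) versus $f_i=i-c_i$ (for $k_i=0$) turns the paper's word-by-word length counts into one-line checks.
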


\begin{proof}

For $1 \leq i \leq n$, there exists a unique $c_i$ such that $w[i,c_i] \neq 0$. We denote $w[i,c_i]$ by $a_i$.\\

\underline{Case 1: Suppose $x = s_i$ for $3 \leq i \leq n$.}\\

Set $w' := s_i w$. Since the left multiplication by the matrix $x$ exchanges the rows $i-1$ and $i$ of $w$ and the other rows remain the same, by Definition \ref{DefREiw} and Lemma \ref{LemmaBlocks}, we have:\\
$R\!E_{i+1}(xw)R\!E_{i+2}(xw) \cdots R\!E_{n}(xw)=R\!E_{i+1}(w) R\!E_{i+2}(w) \cdots R\!E_{n}(w)$ and\\
$R\!E_{2}(xw)R\!E_{3}(xw) \cdots R\!E_{i-2}(xw)=R\!E_{2}(w) R\!E_{3}(w) \cdots R\!E_{i-2}(w)$.\\
Then, in order to prove our property, we should compare $\boldsymbol{\ell}_1:=\boldsymbol{\ell}(R\!E_{i-1}(w)R\!E_i(w))$ and $\boldsymbol{\ell}_2:=\boldsymbol{\ell}(R\!E_{i-1}(xw)R\!E_i(xw))$.\\

Suppose  $c_{i-1} < c_{i}$, by Lemma \ref{LemmaBlocks}, the rows $i-1$ and $i$ of the blocks $w_i$ and $w'_{i}$ are of the form:

\begin{tabular}{ccc}

\begin{tikzpicture}[scale=0.5]

\node at (0,0) {};
\node at (0,0.8) {$w_i$ :};

\end{tikzpicture}

 & & \begin{tikzpicture}[scale=0.5]

\node at (0,1) {$i$};
\node at (0,2) {$i-1$};
\node at (2,3) {$..$};
\node at (2.6,3) {$c$};
\node at (4,3) {$..$};
\node at (5,3) {$c'$};
\node at (5.7,3) {$..$};
\node at (7,3) {$i$};

\node at (3,2) {$b_{i-1}$};
\node at (5,1) {$a_i$};

\draw [-] (1,0.5) to node[auto] {} (1,2.5);
\draw [-] (1,0.5) to node[auto] {} (7.3,0.5);
\draw [-] (7.3,0.5) to node[auto] {} (7.3,2.5);
\draw [-] (1,2.5) to node[auto] {} (7.3,2.5);

\end{tikzpicture}\\

\begin{tikzpicture}[scale=0.5]

\node at (0,0) {};
\node at (0,0.8) {$w'_i$ :};

\end{tikzpicture}

 & & \begin{tikzpicture}[scale=0.5]

\node at (0,1) {$i$};
\node at (0,2) {$i-1$};
\node at (2,3) {$..$};
\node at (2.6,3) {$c$};
\node at (4,3) {$..$};
\node at (5,3) {$c'$};
\node at (5.7,3) {$..$};
\node at (7,3) {$i$};

\node at (3,1) {$b_{i-1}$};
\node at (5,2) {$a_i$};

\draw [-] (1,0.5) to node[auto] {} (1,2.5);
\draw [-] (1,0.5) to node[auto] {} (7.3,0.5);
\draw [-] (7.3,0.5) to node[auto] {} (7.3,2.5);
\draw [-] (1,2.5) to node[auto] {} (7.3,2.5);

\end{tikzpicture}

\end{tabular}

with $c < c'$ and where we write $b_{i-1}$ instead of $a_{i-1}$ since $a_{i-1}$ may change when applying the algorithm if $c_{i-1} =1$, that is $a_{i-1}$ on the first column of $w$.\\

We will discuss different cases depending on the values of $a_i$ and $b_{i-1}$.

\begin{itemize}

\item \underline{Suppose $a_i = 1$.}

\begin{itemize}

\item \underline{If $b_{i-1} =1$,}\\
we have $R\!E_{i}(w) = \boldsymbol{s}_i \cdots \boldsymbol{s}_{c'+2} \boldsymbol{s}_{c'+1}$ and $R\!E_{i-1}(w) = \boldsymbol{s}_{i-1} \cdots \boldsymbol{s}_{c+2} \boldsymbol{s}_{c+1}$.
Furthermore, we have $R\!E_{i}(xw) = \boldsymbol{s}_i \cdots \boldsymbol{s}_{c+2} \boldsymbol{s}_{c+1}$\\
and $R\!E_{i-1}(xw) = \boldsymbol{s}_{i-1} \cdots \boldsymbol{s}_{c'+1} \boldsymbol{s}_{c'}$.\\
It follows that $\boldsymbol{\ell}_1 = ((i-1)-(c+1)+1) + (i-(c'+1)+1) = 2i-c-c'-1$ and $\boldsymbol{\ell}_2 = ((i-1)-c'+1) + (i-(c+1)+1) = 2i-c-c'$ hence $\boldsymbol{\ell}_2 = \boldsymbol{\ell}_1 +1$.\\
\item \underline{If $b_{i-1} = \zeta_e^{k}$ with $1 \leq k \leq e-1$,}\\
we have $R\!E_{i}(w) = \boldsymbol{s}_i \cdots \boldsymbol{s}_{c'+2} \boldsymbol{s}_{c'+1}$ and $R\!E_{i-1}(w) = \boldsymbol{s}_{i-1} \cdots \boldsymbol{s}_3 \boldsymbol{t}_k \boldsymbol{t}_0 \boldsymbol{s}_{3} \cdots \boldsymbol{s}_{c}$. Furthermore, we have $R\!E_{i}(xw) = \boldsymbol{s}_i \cdots \boldsymbol{s}_3 \boldsymbol{t}_k \boldsymbol{t}_0 \boldsymbol{s}_{3} \cdots \boldsymbol{s}_{c}$ and $R\!E_{i-1}(xw) = \boldsymbol{s}_{i-1} \cdots \boldsymbol{s}_{c'}$.\\
It follows that $\boldsymbol{\ell}_1 = (((i-1)-3+1) + 2 + (c-3+1)) + (i-(c'+1)+1) = 2i+c-c'-3$ and $\boldsymbol{\ell}_2 = ((i-1)-c'+1) + ((i-3+1) + 2 + (c-3+1)) = 2i+c-c'-2$ hence $\boldsymbol{\ell}_2 = \boldsymbol{\ell}_1 +1$.

\end{itemize}

It follows that

\begin{center}
if $a_i =1$, then $\boldsymbol{\ell}(R\!E(s_iw))= \boldsymbol{\ell}(R\!E(w)) +1. \hspace{1cm} (a)$
\end{center}

\item \underline{Suppose now that $a_i = \zeta_e^{k}$ with $1 \leq k \leq e-1$.}

\begin{itemize}

\item \underline{If $b_{i-1} = 1$,}\\
we have $R\!E_{i}(w) = \boldsymbol{s}_{i} \cdots \boldsymbol{s}_3 \boldsymbol{t}_k \boldsymbol{t}_0 \boldsymbol{s}_{3} \cdots \boldsymbol{s}_{c'}$ and $R\!E_{i-1}(w) =\boldsymbol{s}_{i-1} \cdots \boldsymbol{s}_{c+1}$.\\
Also, we have $R\!E_{i}(xw) = \boldsymbol{s}_i \cdots \boldsymbol{s}_{c+1}$ and\\
$R\!E_{i-1}(xw) = \boldsymbol{s}_{i-1} \cdots \boldsymbol{s}_3 \boldsymbol{t}_k \boldsymbol{t}_0 \boldsymbol{s}_{3} \cdots \boldsymbol{s}_{c'-1}$.\\
It follows that $\boldsymbol{\ell}_1 = ((i-1)-(c+1)-1) + ((i-3+1)+2+(c'-3+1)) = 2i-c+c'-5$ and $\boldsymbol{\ell}_2 = (((i-1)-3+1)+2+((c'-1)-3+1)) + (i-(c+1)-1) = 2i-c+c'-6$ hence $\boldsymbol{\ell}_2 = \boldsymbol{\ell}_1 -1$.\\

\item \underline{If $b_{i-1} = \zeta_e^{k'}$ with $1 \leq k' \leq e-1$,}\\
we have $R\!E_{i}(w) = \boldsymbol{s}_{i} \cdots \boldsymbol{s}_3 \boldsymbol{t}_k \boldsymbol{t}_0 \boldsymbol{s}_{3} \cdots  \boldsymbol{s}_{c'}$ and\\
$R\!E_{i-1}(w) = \boldsymbol{s}_{i-1} \cdots \boldsymbol{s}_3 \boldsymbol{t}_{k'} \boldsymbol{t}_0 \boldsymbol{s}_{3} \cdots \boldsymbol{s}_{c}$. \\
Also, we have $R\!E_{i}(xw) = \boldsymbol{s}_{i} \cdots \boldsymbol{s}_3 \boldsymbol{t}_{k'} \boldsymbol{t}_0 \boldsymbol{s}_{3} \cdots \boldsymbol{s}_{c}$ and\\
$R\!E_{i-1}(xw) = \boldsymbol{s}_{i-1} \cdots \boldsymbol{s}_3 \boldsymbol{t}_k \boldsymbol{t}_0 \boldsymbol{s}_{3} \cdots \boldsymbol{s}_{c'-1}$.\\
It follows that $\boldsymbol{\ell}_1 = ((i-1)-3+1) +2+(c-3+1) + (i-3+1) +2+ (c'-3+1) = 2i+c+c'-5$ and $\boldsymbol{\ell}_2 = ((i-1)-3+1)+2+((c'-1)-3+1)+(i-3+1)+2+(c-3+1) = 2i+c+c'-6$ hence $\boldsymbol{\ell}_2 = \boldsymbol{\ell}_1 -1$.

\end{itemize}

It follows that

\begin{center}
if $a_i \neq 1$, then $\boldsymbol{\ell}(R\!E(s_iw)) = \boldsymbol{\ell}(R\!E(w)) -1. \hspace{1cm} (b)$
\end{center}

\end{itemize}

Suppose, on the other hand, $c_{i-1} > c_i$. Recall that $w' = s_iw$. If $w'[i-1,c'_{i-1}]$ and $w'[i,c'_{i}]$ denote the nonzero entries of $w'$ on the rows $i-1$ and $i$, respectively, we have $w'[i-1,c'_{i-1}] = a_i$ and $w'[i,c'_{i}] = a_{i-1}$. For $w'$, we have $c'_{i-1} < c'_{i}$, in which case the preceding analysis would give:

\begin{center}

if $a_{i-1} = 1$, then $\boldsymbol{\ell}(R\!E(s_i(s_iw))) = \boldsymbol{\ell}(R\!E(s_iw)) + 1$,\\
if $a_{i-1} \neq 1$, then $\boldsymbol{\ell}(R\!E(s_i(s_iw))) = \boldsymbol{\ell}(R\!E(s_iw)) - 1$.

\end{center}

Hence, since $s_i^2 = 1$, we get the following:

\begin{center}

if $a_{i-1} = 1$, then $\boldsymbol{\ell}(R\!E(s_iw)) = \boldsymbol{\ell}(R\!E(w)) - 1. \hspace{1cm} (a')$,\\
if $a_{i-1} \neq 1$, then $\boldsymbol{\ell}(R\!E(s_iw)) = \boldsymbol{\ell}(R\!E(w)) + 1. \hspace{1cm} (b')$.

\end{center}

\underline{Case 2: Suppose $x = t_i$ for $0 \leq i \leq e-1$.}\\

Set $w' := t_iw$. By the left multiplication by $t_i$, we have that the last $n-2$ rows of $w$ and $w'$ are the same. Hence, by Definition \ref{DefREiw} and Lemma \ref{LemmaBlocks}, we have:\\
$R\!E_3(xw)R\!E_4(xw) \cdots R\!E_n(xw) = R\!E_3(w)R\!E_4(w) \cdots R\!E_n(w)$. In order to prove our property in this case, we should compare $\boldsymbol{\ell}_1 := \boldsymbol{\ell}(R\!E_2(w))$ and $\boldsymbol{\ell}_2 := \boldsymbol{\ell}(R\!E_2(xw))$.

\begin{itemize}

\item \underline{Consider the case where $c_1 < c_2$.}\\

Since $c_1 < c_2$, by Lemma \ref{LemmaBlocks}, the blocks $w_2$ and $w'_2$ are of the form:

$w_2 = \begin{pmatrix}

b_1 & 0\\
0 & a_2\\

\end{pmatrix}$ and $w'_2 = \begin{pmatrix}

0 & \zeta_e^{-i}a_2\\
\zeta_e^{i}b_{1} & 0\\ 

\end{pmatrix}$ with $b_{1}$ instead of $a_{1}$ since $a_{1}$ may change when applying the algorithm if $c_{1} =1$.

\begin{itemize}
\item \underline{Suppose $a_2 = 1$,}\\
we have $b_1 = 1$ necessarily hence $\boldsymbol{\ell}_1 = 0$. Since $R\!E_2(xw) = \boldsymbol{t}_i$, we have $\boldsymbol{\ell}_2 = 1$. It follows that when $c_1 < c_2$,

\begin{center}
if $a_2 =1$, then $\boldsymbol{\ell}(R\!E(t_iw)) = \boldsymbol{\ell}(R\!E(w)) +1. \hspace{1cm} (c)$
\end{center}

\item \underline{Suppose $a_2 = \zeta_e^{k}$ with $1 \leq k \leq e-1$,} then $b_1 = \zeta_e^{-k}$.\\
We get $R\!E_2(w) = \boldsymbol{t}_k\boldsymbol{t}_0$. Thus, $\boldsymbol{\ell}_1 = 2$. We also get $R\!E_2(xw) = \mathbf{t}_{i-k}$. Thus, $\boldsymbol{\ell}_2 = 1$. It follows that when $c_1 < c_2$,

\begin{center}
if $a_2 \neq 1$, then $\boldsymbol{\ell}(R\!E(t_iw)) = \boldsymbol{\ell}(R\!E(w))  -1. \hspace{1cm} (d)$
\end{center}

\end{itemize}

\item \underline{Now, consider the case where $c_1 > c_2$.}\\

Since $c_1 > c_2$, by Lemma \ref{LemmaBlocks}, the blocks $w_2$ and $w'_2$ are of the form:

$w_2 = \begin{pmatrix}

0 & a_1\\
b_2 & 0\\

\end{pmatrix}$ and $w'_2 = \begin{pmatrix}

\zeta_e^{-i}b_2 & 0\\
0 & \zeta_e^{i}a_{1}\\ 

\end{pmatrix}$ with $b_{2}$ instead of $a_{2}$ since $a_{2}$ may change when applying the algorithm if $c_{2} =1$.

\begin{itemize}

\item \underline{Suppose $a_1 \neq \zeta_e^{-i}$,} then $b_2 \neq \zeta_e^i$.\\
We have $\boldsymbol{\ell}_1 =1$ necessarily, and since $\zeta_e^{i}a_1 \neq 1$, we have $\boldsymbol{\ell}_2 = 2$. Hence when $c_1 > c_2$,

\begin{center}
if $a_1 \neq \zeta_e^{-i}$, then $\boldsymbol{\ell}(R\!E(t_iw)) = \boldsymbol{\ell}(R\!E(w)) +1. \hspace{1cm} (e)$
\end{center}

\item \underline{Suppose $a_1 = \zeta_e^{-i}$,}\\
we have $\boldsymbol{\ell}_1 = 1$ and $\boldsymbol{\ell}_2 = 0$. Hence when $c_1 > c_2$,

\begin{center}
if $a_1 = \zeta_e^{-i}$, then $\boldsymbol{\ell}(R\!E(t_iw)) = \boldsymbol{\ell}(R\!E(w)) -1. \hspace{1cm} (f)$
\end{center}

\end{itemize}

\end{itemize}

This finishes our proof.
\end{proof}

\begin{proposition}\label{PropREwRedExp}

Let $w$ be an element of $G(e,e,n)$. The word $R\!E(w)$ is a reduced expression over $\mathbf{X}$ of $w$.

\end{proposition}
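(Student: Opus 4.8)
The plan is to derive Proposition \ref{PropREwRedExp} from the length-comparison Proposition \ref{prop.lengthcomp} by a short induction on $\ell(w)$. First I would record one inequality for free: the preceding proposition already shows that $R\!E(w)$ is a word representative over $\mathbf{X}$ of $w$, so by Definition \ref{def.Slength.reducedword} its word length cannot be smaller than the minimal length, i.e. $\boldsymbol{\ell}(R\!E(w)) \geq \ell(w)$. Everything then reduces to proving the reverse inequality $\boldsymbol{\ell}(R\!E(w)) \leq \ell(w)$, since the two together force $\boldsymbol{\ell}(R\!E(w)) = \ell(w)$, which is precisely the assertion that $R\!E(w)$ is reduced.

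For the reverse inequality I would induct on $m := \ell(w)$. In the base case $m=0$ we have $w = I_n$, and Algorithm \ref{Algo1} returns the empty word, so $\boldsymbol{\ell}(R\!E(w)) = 0 = \ell(w)$. For the inductive step I choose a reduced expression $w = x_1 x_2 \cdots x_m$ with each $x_j \in X$ and set $w' := x_1 w$. Because every element of $X$ is a matrix of order two, we have $x_1^{-1} = x_1$, hence $w' = x_2 \cdots x_m$ and $\ell(w') = m-1$. The induction hypothesis gives $\boldsymbol{\ell}(R\!E(w')) = m-1$, and applying Proposition \ref{prop.lengthcomp} with $x = x_1$ yields $\boldsymbol{\ell}(R\!E(w)) = \boldsymbol{\ell}(R\!E(w')) \pm 1 \leq (m-1)+1 = m = \ell(w)$, which closes the induction.

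Combining the two inequalities gives $\boldsymbol{\ell}(R\!E(w)) = \ell(w)$, so $R\!E(w)$ is a word representative of $w$ of minimal length over $\mathbf{X}$, that is, a reduced expression of $w$.

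The substantive work has already been carried out in Proposition \ref{prop.lengthcomp}, so this deduction is genuinely short. The one point I would be careful about is that the induction must peel off the \emph{first letter of a reduced expression}: together with the involutivity $x_1^{-1}=x_1$ of the generators, this is what guarantees $\ell(x_1 w) = \ell(w)-1$, and it is exactly this that lets me resolve the sign ambiguity in the $\pm 1$ coming from the length comparison. I do not expect any obstacle beyond keeping this single induction and the bookkeeping of that sign correct.
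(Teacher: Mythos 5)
Your proof is correct and takes essentially the same route as the paper: both get $\boldsymbol{\ell}(R\!E(w)) \geq \ell(w)$ from the fact that $R\!E(w)$ is a word representative of $w$, and both get the reverse inequality by peeling the letters of a reduced expression $x_1x_2\cdots x_r$ one at a time via Proposition \ref{prop.lengthcomp}. The only cosmetic difference is that the paper writes your induction as a telescoping chain $\boldsymbol{\ell}(R\!E(x_1\cdots x_r)) \leq \boldsymbol{\ell}(R\!E(x_2\cdots x_r))+1 \leq \cdots \leq r$, which sidesteps even the need to know $\ell(w')=m-1$ exactly, whereas your version tracks that equality through the induction hypothesis.
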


\begin{proof}

We must prove that $\ell(w) = \boldsymbol{\ell}(R\!E(w))$.\\
Let $\mathbf{x}_1\mathbf{x}_2 \cdots \mathbf{x}_r$ be a reduced expression over $\mathbf{X}$ of $w$. Hence $\ell(w) = \boldsymbol{\ell}(\mathbf{x}_1\mathbf{x}_2 \cdots \mathbf{x}_r) = r$. Since $R\!E(w)$ is a word representative over $\mathbf{X}$ of $w$, we have $\boldsymbol{\ell}(R\!E(w)) \geq \boldsymbol{\ell}(\mathbf{x}_1\mathbf{x}_2 \cdots \mathbf{x}_r) = r$. We prove that $\boldsymbol{\ell}(R\!E(w)) \leq r$. Observe that we can write $w$ as $x_1x_2 \cdots x_r$ where $x_1,x_2, \cdots, x_r$ are the matrices of $G(e,e,n)$ corresponding to $\mathbf{x}_1,\mathbf{x}_2, \cdots, \mathbf{x}_r$.\\
By Proposition \ref{prop.lengthcomp}, we have: $\boldsymbol{\ell}(R\!E(w)) = \boldsymbol{\ell}(R\!E(x_1x_2 \cdots x_r)) \leq \boldsymbol{\ell}(R\!E(x_2x_3 \cdots x_r)) +1 \leq \boldsymbol{\ell}(R\!E(x_3 \cdots x_r)) +2 \leq \cdots \leq r$. Hence $\boldsymbol{\ell}(R\!E(w)) = r = \ell(w)$ and we are done.

\end{proof}

The following proposition is useful in the next sections. Its proof is based on the proof of Proposition \ref{prop.lengthcomp}.

\begin{proposition}\label{PropLengthdecreas}

Let $w$ be an element of $G(e,e,n)$. Denote by $a_i$ the unique nonzero entry $w[i,c_i]$ on the row $i$ of $w$ where $1 \leq i, c_i \leq n$.

\begin{enumerate}

\item For $3 \leq i \leq n$, we have:
	\begin{enumerate}
		\item if $c_{i-1} < c_i$, then
				\begin{center}$\ell(s_{i}w) = \ell(w)-1$ if and only if $a_{i} \neq 1.$\end{center}
		\item if $c_{i-1} > c_i$, then
				\begin{center}$\ell(s_{i}w) = \ell(w)-1$ if and only if $a_{i-1} =1.$\end{center}
	\end{enumerate}
\item If $c_1 < c_2$, then $\forall\ 0 \leq k \leq e-1$, we have
				\begin{center}$\ell(t_k w) = \ell(w)-1$ if and only if $a_2 \neq 1.$\end{center}
\item If $c_1 > c_2$, then $\forall\ 0 \leq k \leq e-1$, we have
				\begin{center}$\ell(t_k w) = \ell(w)-1$ if and only if $a_1 = \zeta_{e}^{-k}.$\end{center}
				
\end{enumerate}

\end{proposition}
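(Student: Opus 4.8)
The plan is to derive the entire statement from two facts already in hand: Proposition \ref{PropREwRedExp}, which says that $R\!E$ computes the genuine length, so $\ell(u) = \boldsymbol{\ell}(R\!E(u))$ for every $u \in G(e,e,n)$; and Proposition \ref{prop.lengthcomp}, which says $|\boldsymbol{\ell}(R\!E(xw)) - \boldsymbol{\ell}(R\!E(w))| = 1$ for each generator $x \in X$. Together these give $|\ell(xw) - \ell(w)| = 1$, so left multiplication by a generator always changes $\ell$ by exactly $\pm 1$; the only thing the present proposition records is the sign in each configuration. Since those signs were computed case by case inside the proof of Proposition \ref{prop.lengthcomp}, the proof is essentially a dictionary matching each item below to the corresponding displayed conclusion (a)--(f) there.

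Concretely, for part 1(a) I take $x = s_i$ with $c_{i-1} < c_i$, which is precisely the configuration analysed in Case 1 of Proposition \ref{prop.lengthcomp}: conclusion (a) gives $\boldsymbol{\ell}(R\!E(s_i w)) = \boldsymbol{\ell}(R\!E(w)) + 1$ when $a_i = 1$, and conclusion (b) gives $\boldsymbol{\ell}(R\!E(s_i w)) = \boldsymbol{\ell}(R\!E(w)) - 1$ when $a_i \neq 1$. Passing through $\ell = \boldsymbol{\ell} \circ R\!E$ yields $\ell(s_i w) = \ell(w) - 1 \iff a_i \neq 1$, which is part 1(a). In the same way, Case 2 with $c_1 < c_2$ (conclusions (c), (d)) gives part 2, and Case 2 with $c_1 > c_2$ (conclusions (e), (f)) gives part 3, after relabelling the index $i$ as $k$.

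The only item not directly supplied is part 1(b), since the proof of Proposition \ref{prop.lengthcomp} handled $x = s_i$ only under the harmless reduction $c_{i-1} < c_i$. To reach $c_{i-1} > c_i$ I would use $s_i^2 = 1$: set $w' := s_i w$. Left multiplication by $s_i$ swaps rows $i-1$ and $i$ and fixes all others, so $w'$ satisfies $c'_{i-1} = c_i < c_{i-1} = c'_i$, putting it in the configuration of part 1(a), and its row-$i$ nonzero entry is $a'_i = a_{i-1}$. Applying part 1(a) to $w'$ gives $\ell(s_i w') = \ell(w') - 1 \iff a_{i-1} \neq 1$; since $s_i w' = w$ this reads $\ell(w) = \ell(s_i w) - 1$, i.e. $\ell(s_i w) = \ell(w) + 1 \iff a_{i-1} \neq 1$. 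Because $\ell(s_i w) \in \{\ell(w) - 1, \ell(w) + 1\}$ by Proposition \ref{prop.lengthcomp}, the two alternatives are mutually exclusive, so negating gives $\ell(s_i w) = \ell(w) - 1 \iff a_{i-1} = 1$, which is part 1(b).

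I expect no real obstacle: the proposition is a bookkeeping summary of an already-finished computation. The single point needing care is the involution argument for part 1(b) --- specifically, verifying that the row swap carries the hypothesis ``$a_i \neq 1$'' of part 1(a) to the hypothesis ``$a_{i-1} = 1$'' here, which happens through the relabelling $a'_i = a_{i-1}$ together with the contrapositive made available by the $\pm 1$ dichotomy.
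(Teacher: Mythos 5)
Your proposal is correct and takes essentially the same route as the paper: the paper's proof likewise reads off items 1(a), 2, and 3 directly from the displayed conclusions $(a)$--$(f)$ in the proof of Proposition \ref{prop.lengthcomp} (using Proposition \ref{PropREwRedExp} to identify $\boldsymbol{\ell}(R\!E(\cdot))$ with $\ell$), and then deduces 1(b) from 1(a) via $s_i^2 = 1$. Your row-swap argument for 1(b), including the relabelling $a'_i = a_{i-1}$ and the appeal to the $\pm 1$ dichotomy, merely spells out the step the paper leaves implicit.
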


\begin{proof}

The claim $1(a)$ is deduced from $(a)$ and $(b)$, $1(b)$ is deduced from $(a')$ and $(b')$, $2$ is deduced from $(c)$ and $(d)$, and $3$ is deduced from $(e)$ and $(f)$ where $(a)$, $(b)$, $(a')$, $(b')$, $(c)$, $(d)$, $(e)$, and $(f)$ are given in the proof of Proposition \ref{prop.lengthcomp}.

\end{proof}

\begin{remark}

Proposition \ref{PropLengthdecreas} was useful in order to implement the interval Garside monoids that we will construct in Section \ref{SectionGarsideStructures}, see \cite{CHEVIEJMichel} and \cite{GAP3CPMichelNeaime}.

\end{remark}

\subsection{Elements of maximal length}

By using the geodesic normal forms of the elements of $G(e,e,n)$ defined by the algorithm, we characterize the elements that are of maximal length over the generating set of the presentation of Corran-Picantin of $G(e,e,n)$. We get the following.

\begin{proposition}\label{PropMaxLength}

Let $e > 1$ and $n \geq 2$. The maximal length of an element of $G(e,e,n)$ is $n(n-1)$. It is realized for diagonal matrices $w$ such that $w[i,i]$ is an $e$-th root of unity different from $1$ for $2 \leq i \leq n$. A minimal word representative of such an element is of the form $$(\mathbf{t}_{k_2}\mathbf{t}_0)(\mathbf{s}_3\mathbf{t}_{k_3}\mathbf{t}_0\mathbf{s}_3)\cdots (\mathbf{s}_n \cdots \mathbf{s}_3\mathbf{t}_{k_n}\mathbf{t}_0\mathbf{s}_3 \cdots \mathbf{s}_n),$$ with $1 \leq k_2, \cdots, k_n \leq e-1$. The number of elements of this form is $(e-1)^{(n-1)}$.

\end{proposition}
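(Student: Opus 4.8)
The plan is to exploit that, by Proposition \ref{PropREwRedExp}, the word $R\!E(w) = R\!E_2(w) R\!E_3(w) \cdots R\!E_n(w)$ is reduced, so that $\ell(w) = \sum_{i=2}^n \boldsymbol{\ell}(R\!E_i(w))$ and it suffices to analyse each factor separately. First I would read off from Definition \ref{DefREiw} the length of each factor in terms of $i$, the column $c$ of the nonzero entry in row $i$ of the block $w_i$, and whether $a_i = w_i[i,c]$ equals $1$: namely $\boldsymbol{\ell}(R\!E_i(w)) = i + c - 2$ when $a_i \neq 1$, and $\boldsymbol{\ell}(R\!E_i(w)) = i - c$ when $a_i = 1$. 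Since $1 \le c \le i$, in both cases this is bounded above by $2(i-1)$, and the bound $2(i-1)$ is attained only when $a_i \neq 1$ and $c = i$. Summing gives $\ell(w) \le \sum_{i=2}^n 2(i-1) = n(n-1)$, which establishes the upper bound.

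For the characterization of equality, I would argue that $\ell(w) = n(n-1)$ forces every factor to meet its bound, i.e. $a_i \neq 1$ and the nonzero entry of row $i$ of $w_i$ sits in column $i$, for all $2 \le i \le n$. The key step is to feed this condition into Lemma \ref{LemmaBlocks}: if at step $i$ the nonzero entry of row $i$ lies on the last column $i$ of $w_i$, then passing to $w_{i-1}$ only deletes row $i$ and column $i$ and rescales the first column by a root of unity, an operation that preserves the zero pattern of the remaining principal submatrix. Running this from $i = n$ down, the monomial structure forces each row $i$ and column $i$ to contain a single nonzero entry on the diagonal; hence $w$ is diagonal. Writing $w = \mathrm{diag}(d_1, \ldots, d_n)$, a short induction using Lemma \ref{LemmaBlocks} shows $w_i = \mathrm{diag}(d_1 d_{i+1}\cdots d_n,\, d_2, \ldots, d_i)$, so $a_i = w_i[i,i] = d_i$ for $2 \le i \le n$ and the condition $a_i \neq 1$ becomes exactly $d_i \neq 1$. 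Conversely, any such diagonal matrix realizes the bound at every step and hence has length $n(n-1)$.

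It remains to record the word and the count. For a maximal-length diagonal $w$ we have $c = i \ge 2$ and $a_i = \zeta_e^{k_i}$ with $1 \le k_i \le e-1$ at each step, so Definition \ref{DefREiw} gives $R\!E_2(w) = \mathbf{t}_{k_2}\mathbf{t}_0$ and $R\!E_i(w) = \mathbf{s}_i \cdots \mathbf{s}_3 \mathbf{t}_{k_i}\mathbf{t}_0 \mathbf{s}_3 \cdots \mathbf{s}_i$ for $i \ge 3$; concatenating yields the asserted form. Finally, the defining condition $\prod_{i=1}^n d_i = 1$ determines $d_1 = (d_2 \cdots d_n)^{-1}$ while $d_2, \ldots, d_n$ range freely over $\mu_e \setminus \{1\}$, and distinct tuples give distinct matrices, so there are exactly $(e-1)^{n-1}$ such elements.

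The main obstacle I expect is the equality analysis of the second paragraph: the upper bound is a routine term-by-term estimate, but pinning down the equality case requires carefully tracking how the column position and the entry $a_i$ of each block evolve under the reduction of Lemma \ref{LemmaBlocks}, and in particular verifying that simultaneous maximality at all steps is compatible only with the diagonal shape. As a sanity check one can rederive the same conclusion from Proposition \ref{PropLengthdecreas}: maximality means $\ell(xw) = \ell(w)-1$ for every generator $x \in X$, and part $3$ rules out $c_1 > c_2$ (one cannot have $a_1 = \zeta_e^{-k}$ for all $k$), after which parts $1$ and $2$ force the permutation underlying $w$ to be trivial and each relevant $a_i \neq 1$.
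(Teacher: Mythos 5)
Your proposal is correct and follows essentially the same route as the paper: both rest on the reducedness of $R\!E(w)$ (Proposition \ref{PropREwRedExp}), the per-block bound $\boldsymbol{\ell}(R\!E_i(w)) \leq 2(i-1)$ with equality exactly when $a_i \neq 1$ sits in column $i$, and Lemma \ref{LemmaBlocks} to identify the extremal matrices as the diagonal ones with nontrivial entries. Your treatment is in fact more careful than the paper's terse proof, since you explicitly establish the ``only if'' direction (that maximal length forces the diagonal shape, via the backward induction on the blocks) which the paper leaves implicit, and your closing cross-check via Proposition \ref{PropLengthdecreas} is a valid independent confirmation.
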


\begin{proof}

By the algorithm, an element $w$ in $G(e,e,n)$ is of maximal length when $w_i[i,i] = \zeta_e^{k}$ for $2 \leq i \leq n$ and $\zeta_e^{k} \neq 1$. By Lemma \ref{LemmaBlocks}, this condition is satisfied when $w$ is a diagonal matrix such that $w[i,i]$ is an $e$-th root of unity different from $1$ for $2 \leq i \leq n$. A minimal word representative given by the algorithm for such an element is of the form $(\mathbf{t}_{k_2}\mathbf{t}_0)(\mathbf{s}_3\mathbf{t}_{k_3}\mathbf{t}_0\mathbf{s}_3)\cdots (\mathbf{s}_n \cdots \mathbf{s}_3\mathbf{t}_{k_n}\mathbf{t}_0\mathbf{s}_3 \cdots \mathbf{s}_n)$ with $1 \leq k_2, \cdots, k_n \leq (e-1)$ which is of length $n(n-1)$. The number of elements of this form is $(e-1)^{(n-1)}$.

\end{proof}

\noindent Denote by $\lambda$ the element $\begin{pmatrix}

(\zeta_e^{-1})^{(n-1)} & & & \\
 & \zeta_e & & \\
 & & \ddots & \\
 & & & \zeta_e\\

\end{pmatrix} \in G(e,e,n)$.

\begin{example}\label{LemmaLengthlambda}

We have $R\!E(\lambda) = (\mathbf{t}_{1}\mathbf{t}_0)(\mathbf{s}_3\mathbf{t}_{1}\mathbf{t}_0\mathbf{s}_3)\cdots (\mathbf{s}_n \cdots \mathbf{s}_3\mathbf{t}_{1}\mathbf{t}_0\mathbf{s}_3 \cdots \mathbf{s}_n)$. Hence ${\ell}(\lambda)$ is equal to $n(n-1)$ which is the maximal length of an element of $G(e,e,n)$.

\end{example}

\begin{remark}

Consider the group $G(1,1,n)$, that is the symmetric group $S_n$. There exists a unique element of maximal length in $S_n$ that is of the form $$\mathbf{t}_0 (\mathbf{s}_3\mathbf{t}_0) \cdots (\mathbf{s}_{n-1}\cdots \mathbf{s}_3\mathbf{t}_0) (\mathbf{s}_{n}\cdots \mathbf{s}_3\mathbf{t}_0).$$ This corresponds to the maximal number of steps of the algorithm. The length of such an element is $n(n-1)/2$ which is already known for the symmetric group $S_n$, see Example 1.5.4 of \cite{GeckPfeifferBook}.

\end{remark}

\begin{remark}\label{RemUniqueMaximalG(2,2,n)}

Consider the group $G(2,2,n)$, that is the Coxeter group of type $D_n$. We have $e = 2$. Then, by Proposition \ref{PropMaxLength}, the number of elements of maximal length is equal to $(e-1)^{(n-1)} = 1$. Hence there exists a unique element of maximal length in $G(2,2,n)$. It is of the form $$(\mathbf{t}_{1}\mathbf{t}_0)(\mathbf{s}_3\mathbf{t}_{1}\mathbf{t}_0\mathbf{s}_3) \cdots (\mathbf{s}_n \cdots \mathbf{s}_3\mathbf{t}_{1}\mathbf{t}_0\mathbf{s}_3 \cdots \mathbf{s}_n).$$ The length of this element is $n(n-1)$ which is already known for Coxeter groups of type $D_n$, see Example 1.5.5 of \cite{GeckPfeifferBook}.

\end{remark}

\section{Balanced elements of maximal length}\label{SectionBalancedElements}

We prove that the only balanced elements of $G(e,e,n)$ that are of maximal length over $\mathbf{X}$ are $\lambda^k$ with $1 \leq k \leq e-1$, where $\lambda$ is the diagonal matrix such that $\lambda[i,i] = \zeta_e$ for $2 \leq i \leq n$ and $\lambda[1,1] = {(\zeta_e^{-1})}^{n-1}$, see Proposition \ref{PropMaxLength}. This is done by explicitly characterizing the intervals of the elements that are of maximal length. We start by defining two partial order relations on $G(e,e,n)$ as follows.

\begin{definition}\label{DefPartalOrder1}

Let $w, w' \in G(e,e,n)$. We say that $w'$ is a divisor of $w$ or $w$ is a multiple of $w'$, and write $w' \preceq w$, if $w = w' w''$ with $w'' \in G(e,e,n)$ and $\ell(w) = \ell(w') + \ell(w'')$. This defines a partial order relation on $G(e,e,n)$.

\end{definition}

\noindent Similarly, we consider another partial order relation on $G(e,e,n)$.

\begin{definition}\label{DefPartialOrder2}

Let $w$, $w' \in G(e,e,n)$. We say that $w'$ is a right divisor of $w$ or $w$ is a left multiple of $w'$, and write $w' \preceq_r w$, if there exists $w'' \in G(e,e,n)$ such that $w=w''w'$ and $\ell(w) = \ell(w'')+\ell(w')$.

\end{definition}

\begin{lemma}\label{LemmaCaractDivlambda}

Let $w, w' \in G(e,e,n)$ and let $\mathbf{x}_1\mathbf{x}_2 \cdots \mathbf{x}_r$ be a reduced expression over $\mathbf{X}$ of $w'$. We have $w' \preceq w$ if and only if,  for all $i$ s.t. $1 \leq i \leq r$, $\ell(x_ix_{i-1}\cdots x_1 w)=\ell(x_{i-1}\cdots x_1 w)-1.$ 

\end{lemma}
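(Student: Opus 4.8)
The plan is to reduce everything to two facts established earlier: first, that the length function behaves multiplicatively step by step, and second, that the generators in $X$ are involutions. From Proposition \ref{prop.lengthcomp} together with Proposition \ref{PropREwRedExp} (which identifies $\ell(w)$ with $\boldsymbol{\ell}(R\!E(w))$), I get the key inequality $|\ell(xw)-\ell(w)|=1$ for every $x\in X$ and every $w\in G(e,e,n)$; that is, left multiplication by a single generator changes the length by exactly $\pm 1$. Also, since each $\mathbf{x}_j$ corresponds to one of the involutions $t_i$ or $s_j$, the matrix $w'=x_1x_2\cdots x_r$ satisfies $w'^{-1}=x_r x_{r-1}\cdots x_1$, and because $\mathbf{x}_1\cdots\mathbf{x}_r$ is reduced we have $\ell(w')=r$. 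I will write $w^{(0)}:=w$ and $w^{(i)}:=x_i x_{i-1}\cdots x_1 w$ for $1\le i\le r$, so that $w^{(r)}=w'^{-1}w$.

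For the forward implication, I assume $w'\preceq w$, so $w=w'w''$ with $\ell(w)=\ell(w')+\ell(w'')=r+\ell(w'')$. Then $w^{(r)}=w'^{-1}w=w''$, hence $\ell(w^{(r)})=\ell(w'')=\ell(w)-r$. Each of the $r$ consecutive passages $w^{(i-1)}\mapsto w^{(i)}=x_i\,w^{(i-1)}$ changes the length by exactly $\pm 1$ by the key inequality, so the total variation $\ell(w^{(r)})-\ell(w^{(0)})=-r$ is a sum of $r$ terms each equal to $\pm 1$. The only way this sum can equal $-r$ is for every term to equal $-1$, which gives precisely $\ell(x_i\cdots x_1 w)=\ell(x_{i-1}\cdots x_1 w)-1$ for all $1\le i\le r$.

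For the converse, I assume these $r$ equalities and telescope them to obtain $\ell(w^{(r)})=\ell(w)-r$. Setting $w'':=w^{(r)}=w'^{-1}w$ yields $w=w'w''$ with $\ell(w'')=\ell(w)-r=\ell(w)-\ell(w')$, i.e. $\ell(w)=\ell(w')+\ell(w'')$, which is exactly the condition $w'\preceq w$ of Definition \ref{DefPartalOrder1}. This closes the equivalence.

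Neither direction involves a serious obstacle; the only point requiring a word of care is the combinatorial observation in the forward direction that a decrease of $r$ achieved in $r$ unit steps forces every step to be a decrease, which is where the hypothesis that $\mathbf{x}_1\cdots\mathbf{x}_r$ is \emph{reduced} (so $\ell(w')=r$) is genuinely used. I will therefore state the $\pm 1$ behaviour of the length explicitly at the outset, since it is the engine of both implications.
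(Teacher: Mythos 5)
Your proof is correct and takes essentially the same approach as the paper: you establish the converse by telescoping the length equalities, and the forward direction via the fact that left multiplication by a generator changes $\ell$ by exactly $\pm 1$, so a total drop of $r$ in $r$ unit steps forces every step to be $-1$ (the paper phrases this same step contrapositively, noting that a single $+1$ step makes $\ell(w')+\ell(w'')>\ell(w)$). The only cosmetic difference is that you justify the $\pm 1$ behaviour explicitly through Propositions \ref{prop.lengthcomp} and \ref{PropREwRedExp}, where the paper appeals more tersely to $x^2=1$.
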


\begin{proof}

On the one hand, we have $w'w''=w$ with $w''=x_rx_{r-1} \cdots x_1 w$ and the condition $\forall\ 1 \leq i \leq r, \ell(x_ix_{i-1}\cdots x_1w)=\ell(x_{i-1}\cdots x_1 w)-1$ implies that \mbox{$\ell(w'')=\ell(w)-r$}. So we get $\ell(w'')+\ell(w')=\ell(w)$. Hence $w' \preceq w$.

On the other hand, since $x^2=1$ for all $x \in X$, we have $\ell(xw)=\ell(w) \pm 1$ for all \mbox{$w \in G(e,e,n)$}. If there exists $i$ such that $\ell(x_i x_{i-1}\cdots x_1 w) = \ell(x_{i-1} \cdots x_1 w)+1$ with $1 \leq i \leq r$, then $\ell(w'') = \ell(x_r x_{r-1}\cdots x_1 w) > \ell(w) -r$. It follows that $\ell(w') + \ell(w'') > \ell(w)$. Hence $w' \npreceq w$.

\end{proof}

Consider the homomorphism ${}^-$: $\mathbf{X}^{*} \longrightarrow G(e,e,n): \mathbf{x} \longmapsto \overline{\mathbf{x}} := x \in X$. If $R\!E(w) = \boldsymbol{x}_1\boldsymbol{x}_2 \cdots \boldsymbol{x}_r$ with $w \in G(e,e,n)$ and $\boldsymbol{x}_1,\boldsymbol{x}_2, \cdots, \boldsymbol{x}_r \in \mathbf{X}$, then $\overline{R\!E(w)} =  x_1x_2 \cdots x_r  = w$ where $x_1, x_2, \cdots, x_r \in X$.\\

In the sequel, we fix $1 \leq k \leq e-1$ and let $w \in G(e,e,n)$.

\begin{definition}

Let $\lambda$ be the diagonal matrix of $G(e,e,n)$ such that $\lambda[i,i] = \zeta_e$ for $2 \leq i \leq n$ and $\lambda[1,1] = (\zeta_e^{-1})^{n-1}$. We define $D_k$ to be the set \begin{center} $\left\{w \in G(e,e,n)\ s.t.\ \overline{R\!E_i(w)} \preceq \overline{R\!E_i(\lambda^k)}\ for\ 2 \leq i \leq n \right\}$, \end{center}
where $R\!E_i(w)$ is given in Definition \ref{DefREiw}.

\end{definition}

\begin{proposition}\label{PropCaractD}

The set $D_k$ consists of the elements $w$ of $G(e,e,n)$ such that for all \mbox{$2 \leq i \leq n$}, $R\!E_i(w)$ can be any of the following words:

\begin{tabular}{ll}
$\mathbf{s}_i \cdots \mathbf{s}_{i'}$ & with $2 \leq i' \leq i$,\\
$\mathbf{s}_i \cdots \mathbf{s}_3 \mathbf{t}_{k'}$ & with $0 \leq k' \leq e-1$,\\
$\mathbf{s}_i \cdots \mathbf{s}_3\mathbf{t}_{k}\mathbf{t}_0\mathbf{s}_3 \cdots \mathbf{s}_{i'}$ & with $2 \leq i' \leq i$.

\end{tabular}

\end{proposition}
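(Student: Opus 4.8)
The plan is to read the set $D$ off its definition by deciding, for each of the finitely many shapes that $R\!E_i(w)$ can take, whether its image left-divides $\mu_i := \overline{R\!E_i(\lambda)}$. By Definition \ref{DefREiw} (together with the convention $\mathbf{s}_2 = \mathbf{t}_0$, which merges the case ``$w_i[i,c]=1$ with $c=1$'' into the family ending in a single $\mathbf{t}_k$ with $k=0$), the word $R\!E_i(w)$ is always one of: $\mathbf{s}_i\cdots\mathbf{s}_3\mathbf{t}_k\mathbf{t}_0\mathbf{s}_3\cdots\mathbf{s}_c$ (with $1\le k\le e-1$ and $2\le c\le i$, the case $c=2$ being $\mathbf{s}_i\cdots\mathbf{s}_3\mathbf{t}_k\mathbf{t}_0$), or $\mathbf{s}_i\cdots\mathbf{s}_3\mathbf{t}_k$ (with $0\le k\le e-1$), or $\mathbf{s}_i\cdots\mathbf{s}_{c+1}$ (the empty word when $c=i$). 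First I would pin down $\mu_i$ explicitly: since $t_1t_0 = \mathrm{diag}(\zeta_e^{-1},\zeta_e,1,\dots,1)$ and $s_i\cdots s_3$ is the permutation sending position $2$ to position $i$, conjugation gives $\mu_i = \mathrm{diag}(\zeta_e^{-1},1,\dots,1,\zeta_e,1,\dots,1)$ with $\zeta_e$ in position $(i,i)$. Note also that each $R\!E_i(w)$ is a reduced expression of its image, being a factor of the reduced word $R\!E(w)=R\!E_2(w)\cdots R\!E_n(w)$, so Lemma \ref{LemmaCaractDivlambda} applies to it.

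With $\mu_i$ in hand I would test each shape against Lemma \ref{LemmaCaractDivlambda}: $\overline{R\!E_i(w)}\preceq\mu_i$ if and only if, reading the reduced word $R\!E_i(w)=\mathbf{x}_1\mathbf{x}_2\cdots\mathbf{x}_r$ from the left, every partial left-multiplication $x_j\cdots x_1\mu_i$ strictly shortens $x_{j-1}\cdots x_1\mu_i$; each such test is decided by Proposition \ref{PropLengthdecreas}. The common prefix $\mathbf{s}_i\cdots\mathbf{s}_3$ is handled once and for all: applying $s_i,s_{i-1},\dots,s_3$ to $\mu_i$ drags the entry $\zeta_e$ from row $i$ up to row $2$ (keeping it in column $i$), and at each step the two rows swapped satisfy $c_{j-1}<c_j$ with the moving value $\ne 1$, so by Proposition \ref{PropLengthdecreas}$(1a)$ every step decreases length; the resulting matrix $N:=s_3\cdots s_i\,\mu_i$ has $\zeta_e^{-1}$ in position $(1,1)$ and $\zeta_e$ in position $(2,i)$. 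For the shapes ending in a single letter $\mathbf{t}_k$ this already finishes the job: in $N$ we have $c_1=1<c_2=i$ and $a_2=\zeta_e\ne 1$, so Proposition \ref{PropLengthdecreas}$(2)$ gives a length drop for every $k$, whence $\mathbf{s}_i\cdots\mathbf{s}_3\mathbf{t}_k\preceq\mu_i$ for all $0\le k\le e-1$. The shapes $\mathbf{s}_i\cdots\mathbf{s}_{c+1}$ are prefixes of $\mathbf{s}_i\cdots\mathbf{s}_3$, and the empty word is trivially a divisor, so these always lie in $D$.

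The crux, and the only place where a genuine constraint appears, is the behaviour of the pair $\mathbf{t}_k\mathbf{t}_0$. After the prefix and the letter $\mathbf{t}_k$, a short computation gives that $t_kN$ has entry $\zeta_e^{1-k}$ in position $(1,i)$ and $\zeta_e^{k-1}$ in position $(2,1)$, so now $c_1=i>c_2=1$. The next letter is $\mathbf{t}_0$, and Proposition \ref{PropLengthdecreas}$(3)$ says that left-multiplication by $t_0$ decreases length if and only if the row-$1$ value equals $\zeta_e^{-0}=1$, i.e. if and only if $\zeta_e^{1-k}=1$, i.e. if and only if $k=1$. Thus for $k\ne 1$ the $\mathbf{t}_0$-step increases length and Lemma \ref{LemmaCaractDivlambda} yields $\overline{R\!E_i(w)}\not\preceq\mu_i$; whereas for $k=1$ one checks that $t_1N$ is a permutation matrix and that the remaining letters $\mathbf{s}_3,\dots,\mathbf{s}_c$ each swap two rows in which the higher column ($i$) carries the value $1$, so by Proposition \ref{PropLengthdecreas}$(1b)$ they all decrease length, giving $\overline{R\!E_i(w)}\preceq\mu_i$. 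I expect this single computation $a_1=\zeta_e^{1-k}$ to be the heart of the argument; the remaining difficulty is purely the bookkeeping of which column $c_1,c_2$ (respectively $c_{j-1},c_j$) carries the nonzero entry at each stage, so that the correct case of Proposition \ref{PropLengthdecreas} is invoked.

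Collecting the verdicts finishes the proof. Among all possible shapes of $R\!E_i(w)$, the ones whose image divides $\mu_i$ are exactly $\mathbf{s}_i\cdots\mathbf{s}_3\mathbf{t}_1\mathbf{t}_0\mathbf{s}_3\cdots\mathbf{s}_{i'}$ (with $3\le i'\le i$), $\mathbf{s}_i\cdots\mathbf{s}_3\mathbf{t}_1\mathbf{t}_0$, $\mathbf{s}_i\cdots\mathbf{s}_3\mathbf{t}_k$ (with $0\le k\le e-1$), and $\mathbf{s}_i\cdots\mathbf{s}_{i'}$ (with $3\le i'\le i$, together with the degenerate empty word); every other shape, namely $\mathbf{t}_k\mathbf{t}_0$-type words with $k\ne 1$, fails. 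This is precisely the list in the statement, and since membership in $D$ is imposed independently for each $i$, it characterizes $D$.
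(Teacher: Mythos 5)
Your proof is correct, but it takes a genuinely different route from the paper's. The paper proves Proposition \ref{PropCaractD} by a direct appeal to Definition \ref{DefPartalOrder1}: for each of the six possible shapes of $R\!E_i(w)$ it exhibits the unique complement $w'$ satisfying $\overline{R\!E_i(w)}\,w' = \overline{R\!E_i(\lambda)}$, writes down a word $R\!E(w')$ for it, and tabulates $\ell\bigl(\overline{R\!E_i(w)}\bigr) + \ell(w')$, which equals $\ell\bigl(\overline{R\!E_i(\lambda)}\bigr) = 2(i-1)$ exactly for the four listed families and overshoots (to $2i$, resp.\ $2(i-1)+2(i'-1)$) for the two $\mathbf{t}_k\mathbf{t}_0$-families with $2 \leq k \leq e-1$. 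You instead run the letter-by-letter scan of Lemma \ref{LemmaCaractDivlambda}, deciding each step via Proposition \ref{PropLengthdecreas} while tracking the matrix entries of the partial products $x_j \cdots x_1 \mu_i$ — which is precisely the machinery the paper deploys only for the subsequent Proposition \ref{PropDisDivLambda}, here applied at the level of each $\mu_i = \mathrm{diag}(\zeta_e^{-1},1,\dots,1,\zeta_e,1,\dots,1)$. Your computations check out ($N = s_3\cdots s_i\mu_i$ has $\zeta_e^{-1}$ at $(1,1)$ and $\zeta_e$ at $(2,i)$; $t_kN$ has $a_1 = \zeta_e^{1-k}$ with $c_1 = i > c_2 = 1$, so Proposition \ref{PropLengthdecreas}(3) lets $\mathbf{t}_0$ pass exactly when $k=1$), and you correctly dispatched the one point this route needs that the paper's does not: Lemma \ref{LemmaCaractDivlambda} applies to $R\!E_i(w)$ because a factor of the reduced word $R\!E(w) = R\!E_2(w)\cdots R\!E_n(w)$ is itself reduced. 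What each approach buys: yours localizes the obstruction to a single letter (the $\mathbf{t}_0$ after $\mathbf{t}_k$, $k \neq 1$) and needs no guessing, at the cost of matrix bookkeeping; the paper's table is shorter and manipulates no intermediate matrices, but it requires producing the six complements and implicitly knowing that the exhibited expressions for $w'$ are reduced (which rests on Algorithm \ref{Algo1} and Proposition \ref{PropREwRedExp}).
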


\begin{proof}

We have $R\!E_i(\lambda^k) = \mathbf{s}_i \cdots \mathbf{s}_3 \mathbf{t}_k \mathbf{t}_0 \mathbf{s}_3 \cdots \mathbf{s}_i$. Let $w \in G(e,e,n)$. Note that $R\!E_i(w)$ is necessarily one of the words given in the first column of the following table. For each $R\!E_i(w)$, there exists a unique $w' \in G(e,e,n)$ with $R\!E(w')$ given in the second column, such that $\overline{R\!E_i(w)}w' = \overline{R\!E_i(\lambda^k)}$.

For $R\!E_i(w) = \mathbf{s}_i \cdots \mathbf{s}_{i'}$ with $2 \leq i' \leq i$, we get $R\!E(w') = \mathbf{s}_{i'-1} \cdots \mathbf{s}_3 \mathbf{t}_k \mathbf{t}_0 \mathbf{s}_3 \cdots \mathbf{s}_i$.

For $R\!E_i(w) = \mathbf{s}_i \cdots \mathbf{s}_3 \mathbf{t}_{k'}$ with $0 \leq k' \leq e-1$, we get $R\!E(w') = \mathbf{t}_{k'-k}\mathbf{s}_3 \cdots \mathbf{s}_i$. In this case, $\overline{R\!E_i(w)}\ \overline{R\!E(w')} = s_i \cdots s_3 t_{k'}t_{k'-k} s_3 \cdots s_i = s_i \cdots s_3 t_{k}t_{0} s_3 \cdots s_i = \overline{R\!E_i(\lambda^k)}$.

For $R\!E_i(w) = \mathbf{s}_i \cdots \mathbf{s}_3\mathbf{t}_k\mathbf{t}_0\mathbf{s}_3 \cdots \mathbf{s}_{i'}$ with $2 \leq i' \leq i$, we get $R\!E(w') = \mathbf{s}_{i'+1} \cdots \mathbf{s}_i$.

Finally, for $R\!E_i(w) = \mathbf{s}_i \cdots \mathbf{s}_3 \mathbf{t}_{k'} \mathbf{t}_0 \mathbf{s}_3 \cdots \mathbf{s}_{i'}$ with $1 \leq k' \leq e-1$, $k' \neq k$, and $2 \leq i' \leq i$, we get $R\!E(w') = \mathbf{s}_{i'} \cdots \mathbf{s}_3 \mathbf{t}_{k-k'} \mathbf{t}_{0} \mathbf{s}_3 \cdots \mathbf{s}_i$.

In the last column, we compute $\ell\left(\overline{R\!E_i(w)}\right) + \ell(\overline{R\!E(w')})$. It is equal to $\ell\left(\overline{R\!E_i(\lambda^k)}\right) = 2(i-1)$ only for the first three cases. The result follows immediately.\\

\begin{tabular}{|l|l|l|}
\hline
$R\!E_i(w)$ & $R\!E(w')$ & \\
\hline
$\mathbf{s}_i \cdots \mathbf{s}_{i'}$ with $2 \leq i' \leq i$ & $\mathbf{s}_{i'-1} \cdots \mathbf{s}_3 \mathbf{t}_k \mathbf{t}_0 \mathbf{s}_3 \cdots \mathbf{s}_i$ & $2(i-1)$\\
\hline
$\mathbf{s}_i \cdots \mathbf{s}_3 \mathbf{t}_{k'}$ with $0 \leq k' \leq e-1$ & $\mathbf{t}_{k'-k}\mathbf{s}_3 \cdots \mathbf{s}_i$ & $2(i-1)$\\
\hline
$\mathbf{s}_i \cdots \mathbf{s}_3\mathbf{t}_k \mathbf{t}_0\mathbf{s}_3 \cdots \mathbf{s}_{i'}$ with $2 \leq i' \leq i$ & $\mathbf{s}_{i'+1} \cdots \mathbf{s}_i$ & $2(i-1)$\\
\hline
$\mathbf{s}_i \cdots \mathbf{s}_3 \mathbf{t}_{k'} \mathbf{t}_0 \mathbf{s}_3 \cdots \mathbf{s}_{i'}$ with $1 \leq k' \leq e-1$, & $\mathbf{s}_{i'} \cdots \mathbf{s}_3 \mathbf{t}_{k-k'} \mathbf{t}_{0} \mathbf{s}_3 \cdots \mathbf{s}_i$ & $2(i-1)+$\\
$k' \neq k$, and $2 \leq i' \leq i$ & & $2(i'-1)$ \\
\hline

\end{tabular}

\end{proof}

The next proposition characterizes the divisors of $\lambda^k$ in $G(e,e,n)$.

\begin{proposition}\label{PropDisDivLambda}

The set $D_k$ is equal to the interval $[1,\lambda^k]$, where \begin{center}$[1,\lambda^k] = \left\{ w \in G(e,e,n)\ s.t.\ 1 \preceq w \preceq \lambda^k  \right\}$.\end{center}

\end{proposition}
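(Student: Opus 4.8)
The plan is to prove the two inclusions $D \subseteq [1,\lambda]$ and $[1,\lambda] \subseteq D$ separately, in both cases reducing the global relation $w \preceq \lambda$ to the block-by-block relations $\overline{R\!E_i(w)} \preceq \overline{R\!E_i(\lambda)}$ that define $D$. The main tools are the factorization $R\!E(w) = R\!E_2(w) \cdots R\!E_n(w)$ into blocks of additive length, the length criterion for divisibility (Lemma \ref{LemmaCaractDivlambda}), the length-variation rules under left multiplication by a generator (Proposition \ref{PropLengthdecreas}), and the explicit list of admissible blocks together with their complements recorded in the proof of Proposition \ref{PropCaractD}.

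For $D \subseteq [1,\lambda]$, given $w \in D$, Proposition \ref{PropCaractD} guarantees that each block $R\!E_i(w)$ is one of the first four forms, and the table in its proof supplies, for every $i$, an element $w'_i$ with $\overline{R\!E_i(w)}\,\overline{R\!E(w'_i)} = \overline{R\!E_i(\lambda)}$ and $\ell(\overline{R\!E_i(w)}) + \ell(w'_i) = 2(i-1)$; in particular $\overline{R\!E_i(w)} \preceq \overline{R\!E_i(\lambda)}$. I would then assemble these local complements into a global one by induction on $n$: writing $w = v\,\overline{R\!E_n(w)}$ with $v = \overline{R\!E_2(w)} \cdots \overline{R\!E_{n-1}(w)}$ lying in the parabolic subgroup $G(e,e,n-1)$ (generated by $s_3, \ldots, s_{n-1}$ and the $t_k$, all fixing the last coordinate), and noting that $\overline{R\!E_2(\lambda)} \cdots \overline{R\!E_{n-1}(\lambda)}$ equals the corresponding element $\lambda_{n-1}$ for $G(e,e,n-1)$. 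The induction hypothesis gives $v \preceq \lambda_{n-1}$; it then remains to incorporate the final block and glue, using Lemma \ref{LemmaCaractDivlambda} to verify that appending the complement of the last block keeps lengths additive.

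For $[1,\lambda] \subseteq D$, I would argue by contraposition. If $w \notin D$, then by Proposition \ref{PropCaractD} some block $R\!E_i(w)$ is one of the two remaining forms, for which the table shows the minimal complement satisfies $\ell(\overline{R\!E_i(w)}) + \ell(w'_i) > 2(i-1) = \ell(\overline{R\!E_i(\lambda)})$, so that $\overline{R\!E_i(w)} \not\preceq \overline{R\!E_i(\lambda)}$. The point is then to show that this block-level failure forces $w \not\preceq \lambda$, i.e.\ that $w \preceq \lambda$ necessitates $\overline{R\!E_i(w)} \preceq \overline{R\!E_i(\lambda)}$ for every $i$. I would establish this necessity directly from the length function: testing $w \preceq \lambda$ via Lemma \ref{LemmaCaractDivlambda} with the reduced word $R\!E(w) = \mathbf{x}_1 \cdots \mathbf{x}_r$, each required strict decrease $\ell(x_j \cdots x_1 \lambda) = \ell(x_{j-1}\cdots x_1 \lambda) - 1$ is governed by Proposition \ref{PropLengthdecreas} through the positions of the nonzero entries, and I would track these entries to show that a bad block obstructs one of these decreases.

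The step I expect to be the main obstacle is the gluing in the forward inclusion, together with its mirror (the necessity of the block conditions in the backward inclusion): although each block divides its counterpart in $\overline{R\!E_i(\lambda)}$, assembling the local complements into a single $w''$ with $\lambda = w\,w''$ and additive length is not automatic, since the complements for different blocks act on overlapping coordinates and must be commuted into position. The induction on $n$ is designed to isolate the last block $\overline{R\!E_n(w)}$ — the unique factor involving $s_n$ — so that the interaction is confined to a single controllable step, and verifying that this step preserves length additivity via Proposition \ref{PropLengthdecreas} and Lemma \ref{LemmaCaractDivlambda} is where the real work lies. I would also use the routine fact that the length over $\mathbf{X}$ of an element of $G(e,e,n-1)$ is the same whether computed in $G(e,e,n-1)$ or in $G(e,e,n)$, which follows from Algorithm \ref{Algo1} producing the same reduced expression in both.
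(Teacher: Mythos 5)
Your overall plan identifies the right tools (Lemma \ref{LemmaCaractDivlambda}, Proposition \ref{PropLengthdecreas}, and the block classification of Proposition \ref{PropCaractD}), and your contrapositive sketch for $[1,\lambda]\subseteq D$ points in the same direction as the paper's actual proof. But as written there is a genuine gap: in both inclusions, the step carrying all the mathematical content is announced rather than carried out. For $D\subseteq[1,\lambda]$ you rightly observe that the local complements $w'_i$ from the table cannot simply be concatenated, and you propose an induction on $n$ to isolate the last block; yet the resulting gluing step --- in effect showing $\overline{R\!E_n(w)}\preceq u\,\overline{R\!E_n(\lambda)}$, where $u=v^{-1}\lambda_{n-1}$ is the complement supplied by the induction hypothesis --- is exactly where you stop, conceding that ``this is where the real work lies.'' Verifying it requires tracking nonzero entries through the criteria of Proposition \ref{PropLengthdecreas} just as in the unreduced problem (the positions $c_i$ and values $a_i$ at each letter depend on $u$), so the induction scaffolding buys nothing until that verification is done. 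Symmetrically, for the converse your phrase ``I would track these entries to show that a bad block obstructs one of these decreases'' is the whole proof, not a step of it.

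The missing idea --- which also lets the paper dispense with your induction entirely --- is a prefix-stability observation: applying Lemma \ref{LemmaCaractDivlambda} to the \emph{full} word $R\!E(w)=R\!E_2(w)\cdots R\!E_n(w)$ acting on $\lambda$ by successive left multiplications, the accumulated prefix $\alpha_i$ coming from blocks $2,\ldots,i-1$ involves only $t_0,\ldots,t_{e-1}$ and $s_3,\ldots,s_{i-1}$, hence alters only the first $i-1$ rows of $\lambda$; so when block $i$ is reached, the entry $\lambda[i,i]=\zeta_e\neq 1$ is still in place. One then follows this single entry: the letters $s_i,\ldots,s_3$ shift it upward (decreases, by $1(a)$), $t_k$ moves it to the first row and turns it into $\zeta_e^{1-k}$ (a decrease, by $2$), and then either $k=1$, so the resulting $1$ travels back down through $t_0,s_3,\ldots,s_{i'}$ (decreases, by $3$ and $1(b)$), which handles all four admissible block forms at once and yields $w\preceq\lambda$; or $2\leq k\leq e-1$, in which case $\zeta_e^{1-k}\neq 1$ and the very next letter $t_0$ \emph{increases} the length by criterion $3$, applied at the first bad block, giving $w\not\preceq\lambda$. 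Your route could in principle be completed --- the isolated last-block verification succeeds by the same entry-tracking --- but the computation it requires coincides with the paper's direct argument, and without the prefix-stability remark and the explicit trace (in particular $\zeta_e\cdot\zeta_e^{-k}=\zeta_e^{1-k}\neq1$) neither inclusion is actually established.
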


\begin{proof}

Let $w \in G(e,e,n)$. We have $R\!E(w) = R\!E_2(w)R\!E_3(w) \cdots R\!E_n(w)$. Let $\textbf{w} \in \mathbf{X}^{*}$ be a word representative of $w$. Denote by $\overleftarrow{\mathbf{w}} \in \mathbf{X}^{*}$ the word obtained by reading $\textbf{w}$ from right to left. For $3 \leq i \leq n$, we denote by $\alpha_i$ the element that corresponds to \begin{small}$\overline{\overleftarrow{R\!E_{i-1}(w)}} \cdots \overline{\overleftarrow{R\!E_2(w)}}$\end{small} in $G(e,e,n)$.

Suppose that $w \in D_k$. We apply Lemma  \ref{LemmaCaractDivlambda} to prove that $w \preceq \lambda^k$. Fix $2 \leq i \leq n$. By Proposition \ref{PropCaractD}, we have three different possibilities for $R\!E_i(w)$.

First, consider the cases $R\!E_i(w) = \mathbf{s}_i \cdots  \mathbf{s}_3  \mathbf{t}_k  \mathbf{t}_0  \mathbf{s}_3 \cdots  \mathbf{s}_{i'}$ or $\mathbf{s}_i \cdots \mathbf{s}_{i'}$ with $2 \leq i' \leq i$. Hence $\overleftarrow{R\!E_i(w)} = \mathbf{s}_{i'} \cdots \mathbf{s}_3 \mathbf{t}_0 \mathbf{t}_k \mathbf{s}_3 \cdots \mathbf{s}_i$ or $\mathbf{s}_{i'} \cdots \mathbf{s}_i$, respectively.
Note that the left multiplication of the matrix $\lambda^k$ by $\alpha_i$ produces permutations only in the block consisting of the first $i-1$ rows and the first $i-1$ columns of $\lambda^k$. Since  $\lambda[i,i] = \zeta_e^k$ ($\neq 1$), by $1(a)$ of Proposition \ref{PropLengthdecreas}, the left multiplication of $\alpha_i \lambda^k$ by $s_{i'} \cdots s_i$ decreases the length maximally $-$ that is, each generator causes a decrease of length $1$. Now, by $2$ of Proposition \ref{PropLengthdecreas}, the left multiplication of $s_3 \cdots s_i \alpha_i \lambda^k$ by $t_k$ decreases the length of $1$. Note that by these left multiplications, $\lambda^k[i,i] = \zeta_e^k$ is shifted to the first row then transformed to $ \zeta_e^k  \zeta_e^{-k}=1$. Hence, by $1(b)$ of Proposition \ref{PropLengthdecreas}, the left multiplication of $t_1s_3 \cdots s_i \alpha_i \lambda^k$ by $s_{i'} \cdots s_3 t_0$ decreases the length maximally. Thus, by Lemma \ref{LemmaCaractDivlambda}, we have $w \preceq \lambda^k$.

Suppose that $R\!E_i(w) = \mathbf{s}_i \cdots \mathbf{s}_3 \mathbf{t}_{k'}$ with $0 \leq k' \leq e-1$. We have $\overleftarrow{R\!E_i(w)} = \mathbf{t}_{k'} \mathbf{s}_3 \cdots \mathbf{s}_i$.
Since $\lambda^k[i,i] = \zeta_e^k$ ($\neq 1$), by $1(a)$ of Proposition \ref{PropLengthdecreas}, the left multiplication of $\alpha_i \lambda^k$ by $s_3 \cdots s_i$ decreases the length maximally. By $2$ of Proposition \ref{PropLengthdecreas}, the left multiplication of $s_3 \cdots s_i \alpha_i \lambda^k$ by $t_{k'}$ also decreases the length of $1$. Hence, by applying Lemma \ref{LemmaCaractDivlambda}, we have $w \preceq \lambda^k$.

Conversely, suppose that $w \notin D_k$, we prove that $w \npreceq \lambda^k$.
If $R\!E(w) = \mathbf{x}_1 \cdots \mathbf{x}_r$, by Lemma \ref{LemmaCaractDivlambda}, we show that there exists $1 \leq i \leq r$ such that $\ell(x_i x_{i-1} \cdots x_1 \lambda^k) = \ell(x_{i-1} \cdots x_1 \lambda^k) +1$. Since $w \notin D$, by Proposition \ref{PropCaractD}, we may consider the first $R\!E_i(w)$ that appears in $R\!E(w) = R\!E_2(w) \cdots R\!E_n(w)$ such that $R\!E_i(w) =\\ \mathbf{s}_i \cdots \mathbf{s}_3 \mathbf{t}_{k'} \mathbf{t}_0 \mathbf{s}_3 \cdots \mathbf{s}_{i'}$ with $1 \leq k' \leq e-1$, $k' \neq k$, and $2 \leq i' \leq i$. Thus, we have $\overleftarrow{{R\!E}_i(w)} = \mathbf{s}_{i'} \cdots \mathbf{s}_3 \mathbf{t}_0\mathbf{t}_{k'} \mathbf{s}_3 \cdots \mathbf{s}_i$.
Since $\lambda^k[i,i] = \zeta_e^k$ ($\neq 1$), by $1(a)$ of Proposition \ref{PropLengthdecreas}, the left multiplication of $\alpha_i \lambda^k$ by $s_3 \cdots s_i$ decreases the length maximally. By $2$ of Proposition \ref{PropLengthdecreas}, the left multiplication of $s_3 \cdots s_i \alpha_i \lambda^k$ by $t_{k'}$ also decreases the length of $1$. Note that by these left multiplications, $\lambda^k[i,i] = \zeta_e^k$ is shifted to the first row then transformed to $\zeta_e^k \zeta_e^{-k'} = \zeta_e^{k-k'}$. Since $k \neq k'$, we have $\zeta_e^{k-k'} \neq 1$. By $3$ of Proposition \ref{PropLengthdecreas}, it follows that the left multiplication of $t_{k'} s_3 \cdots s_i \alpha_i \lambda^k$ by $t_0$ increases the length. Hence $w \npreceq \lambda^k$.

\end{proof}

We want to recognize if an element $w \in G(e,e,n)$ is in the set $D_k$ directly from its matrix form. For this purpose, we introduce nice combinatorial tools defined as follows. Fix $1 \leq k \leq e-1$. Let $w \in G(e,e,n)$.

\begin{definition}\label{DefinitionBullets}

An index $[i,c]$ is said to be a \emph{bullet} if $w[j,d] = 0$ for all $[j,d] \in \left\{ [j,d]\ s.t.\ j \leq i\ and\ d\leq c  \right\} \setminus \left\{[i,c] \right\} $. When $[i,c]$ is a bullet, $w[i,c]$ is represented by an encircled element.

\end{definition}

\begin{definition}\label{DefinitionZwZ'w}
We define two sets of matrix indices $Z(w)$ and $Z'(w)$ as follows.
\begin{itemize}

\item $Z(w) := \left\{ [j,d]\ s.t.\ j \leq i\ and\ d \leq c\ for\ some\ bullet\ [i,c] \right\}$.
\item $Z'(w)$ is the set of matrix indices not in $Z(w)$.

\end{itemize}

\end{definition}

We draw a path in the matrix $w$ that separates it into two parts such that the upper left-hand side is $Z(w)$ and the other side is $Z'(w)$. Let us illustrate this by the following example.

\begin{example}


Let $w =
\left(
\begin{BMAT}{ccccc}{ccccc}
0 & 0 & 0  & \encircle{$\zeta_3^2$} & 0 \\
0 & 0 & 0 & 0 & \zeta_3 \\
0 & 0 & \encircle{$\zeta_3$} & 0 & 0 \\
\encircle{$1$} & 0 & 0 & 0 & 0 \\
0 & \zeta_3^2 & 0 & 0 & 0
\addpath{(0,1,0)rurruurur}
\end{BMAT}
\right) \in G(3,3,5).
$ When $[i,c]$ is a bullet, $w[i,c]$ is an encircled element and the drawn path separates $Z(w)$ from $Z'(w)$.

\end{example}

\begin{remark}\label{RemBulletsinZ}

Let $[i,c]$ be one of the bullets of $w \in G(e,e,n)$. We have
\begin{center} $[i-1,c] \in Z(w)$ and $[i,c-1] \in Z(w)$. \end{center}
An index $[i,c]$ such that $w[i,c] \neq 0$ and $[i,c]$ is not a bullet does not satisfy this condition.

\end{remark}

\begin{remark}\label{RemBulletsFirstRowColumn}

The indices corresponding to nonzero entries on the first row and the first column of $w$ are always bullets. In particular, when $w[1,1] \neq 0$, we have $[1,1]$ is a bullet and it is the only bullet of $w$ (as this nonzero entry at $[1,1]$ is above, or to the left of, every entry of $w$).  

\end{remark}

The following proposition gives a nice description of the divisors of $\lambda^k$ in $G(e,e,n)$.

\begin{proposition}\label{PropZ'=1orZetae}

Let $w \in G(e,e,n)$. We have that $w \in D_k$ (i.e. $w \preceq \lambda^k$) if and only if, for all $[j,d] \in Z'(w)$, $w[j,d]$ is either $0$, $1$, or $\zeta_e^k$.

\end{proposition}

\begin{proof}

Let $w \in D_k$ and let $w[i,c] \neq 0$ for $[i,c] \in Z'(w)$. Since $w \in D_k$, by Proposition \ref{PropCaractD}, we have $R\!E_i(w) = \mathbf{s}_i \cdots \mathbf{s}_{i'}$ or $\mathbf{s}_i \cdots \mathbf{s}_3\mathbf{t}_k\mathbf{t}_0\mathbf{s}_3 \cdots \mathbf{s}_{i'}$ for $2 \leq i' \leq i$ (the case $R\!E_i(w) = \mathbf{s}_i \cdots \mathbf{s}_3\mathbf{t}_{k'}$ for $0 \leq k' \leq e-1$ appears only when $w[i,c] \neq 0$ and $[i,c]$ is a bullet). By Lemma \ref{LemmaBlocks}, we have $w[i,c] = w_i[i,d]$ for some $1 < d \leq i$. It follows that for $R\!E_i(w) = \mathbf{s}_i \cdots \mathbf{s}_{i'}$, we have $w[i,c] = 1$ and for $R\!E_i(w) = \mathbf{s}_i \cdots \mathbf{s}_3\mathbf{t}_k\mathbf{t}_0\mathbf{s}_3 \cdots \mathbf{s}_{i'}$, we have $w[i,c] = \zeta_e^k$.

Conversely, suppose that $w[j,d]$ is $0$, $1$, or $\zeta_e^k$ whenever $[j,d] \in Z'(w)$. Firstly, consider a nonzero entry $w[i,c]$ of $w$ for which $[i,c] \in Z'(w)$. From Remark \ref{RemBulletsFirstRowColumn}, we have $i \geq 2$. Once again $R\!E_i(w) = \mathbf{s}_i \cdots \mathbf{s}_3\mathbf{t}_k\mathbf{t}_0\mathbf{s}_3 \cdots \mathbf{s}_{i'}$ or $\mathbf{s}_i \cdots \mathbf{s}_{i'}$ for $2 \leq i' \leq i$. On the other hand, if $w[i,c]$ is a nonzero entry of $w$ for which $[i,c] \notin Z'(w)\ -$ that is, $[i,c]$ is a bullet of $w$, so by Lemma \ref{LemmaBlocks}, we have $w_i[i,1] = \zeta_e^{k'}$ for some $0 \leq k' \leq e-1$, for which case $R\!E_{i}(w) = \mathbf{s}_i \cdots \mathbf{s}_3 \mathbf{t}_{k'}$. Hence, by Proposition \ref{PropCaractD}, we have $w \in D_k$.

\end{proof}

\begin{example}

Let $w =
\left(
\begin{BMAT}{ccccc}{ccccc}
0 & 0 & 0  & \encircle{$1$} & 0 \\
0 & 0 & 0 & 0 & \boxed{\zeta_3} \\
0 & 0 & \encircle{$\zeta_3$} & 0 & 0 \\
\encircle{$\zeta_3$} & 0 & 0 & 0 & 0 \\
0 & \boxed{1} & 0 & 0 & 0
\addpath{(0,1,0)rurruurur}
\end{BMAT}
\right) \in G(3,3,5)$.\\ For all $[i,c] \in Z'(w)$, we have $w[i,c]$ is equal to $1$ or $\zeta_3$ (these are the boxed entries of $w$). It follows immediately that $w \preceq \lambda$ ($w \in [1,\lambda]$).

\end{example}

\begin{example}

Let $w =
\left(
\begin{BMAT}{ccccc}{ccccc}
0 & 0 & 0  & \encircle{$1$} & 0 \\
0 & 0 & 0 & 0 & \boxed{1} \\
0 & 0 & \encircle{$\zeta_3$} & 0 & 0 \\
\encircle{$1$} & 0 & 0 & 0 & 0 \\
0 & \boxed{\zeta_3^2} & 0 & 0 & 0
\addpath{(0,1,0)rurruurur}
\end{BMAT}
\right) \in G(3,3,5).
$\\ For all $[i,c] \in Z'(w)$, we have $w[i,c]$ is equal to $1$ or $\zeta_3^2$ (these are the boxed entries of $w$). It follows immediately that $w \in [1,\lambda^2]$.

\end{example}

\begin{example}

Let $w =
\left(
\begin{BMAT}{cccc}{cccc}
\encircle{$\zeta_3^2$} & 0 & 0  & 0 \\
0 & 0 & \zeta_3 & 0 \\
0 & \zeta_3 & 0 & 0 \\
0 & 0 & 0 & \boxed{\zeta_3^2} 
\addpath{(0,3,0)rurrr}
\end{BMAT}
\right) \in G(3,3,4).$\\ There exists $[i,c] \in Z'(w)$ such that $w[i,c] = \zeta_3^2$ (the boxed element in $w$). It follows immediately that $w \notin [1,\lambda]$. Moreover, there exists $[i',c'] \in Z'(w)$ such that $w[i',c'] = \zeta_3$. Hence we have $w \notin [1,\lambda^2]$. 

\end{example}

\begin{remark}\label{RemDivisorsLambdaG(2,2,n)}

Let $w$ be an element of the Coxeter group $G(2,2,n)$. The nonzero elements $w[i,c]$ with $[i,c] \in Z'(w)$ are always equal to $1$ or $-1$. Hence by Proposition \ref{PropZ'=1orZetae}, all the elements of $G(2,2,n)$ are left divisors of the unique element of maximal length $\lambda$ in $G(2,2,n)$, see Remark \ref{RemUniqueMaximalG(2,2,n)}.\\ For example,
Let $w =
\left(
\begin{BMAT}{ccccc}{ccccc}
0 & 0 & 0  & \encircle{$1$} & 0 \\
0 & 0 & 0 & 0 & \boxed{1} \\
0 & 0 & \encircle{$-1$} & 0 & 0 \\
\encircle{$1$} & 0 & 0 & 0 & 0 \\
0 & \boxed{-1} & 0 & 0 & 0
\addpath{(0,1,0)rurruurur}
\end{BMAT}
\right)$ be an element of $G(2,2,4)$. Since all the nonzero elements $w[i,c]$ with $[i,c] \in Z'(w)$ are equal to $1$ or $-1$, it follows immediately that $w \preceq \lambda$.

\end{remark}

\medskip

Our description of the interval $[1, \lambda^k]$ allows us to prove easily that $\lambda^k$ is balanced. Let us recall the definition of a balanced element.

\begin{definition}\label{DefBalancedElement}

A balanced element in $G(e,e,n)$ is an element $w$ such that $w' \preceq w$ holds precisely when $w \preceq_r w'$. 

\end{definition}

\noindent The next lemma is obvious.

\begin{lemma}\label{LemmaDivisorsForInduction}

Let $g$ be a balanced element and let $w, w' \in [1,g]$. If $w' \preceq w$, then $(w')^{-1}w \preceq g$.

\end{lemma}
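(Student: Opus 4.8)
The plan is to exploit the balanced hypothesis twice: once to convert the left-divisibility $w \preceq g$ into right-divisibility, and once at the very end to convert a right-divisibility back into the desired left-divisibility. Throughout I would write $h := (w')^{-1}w$, so that the hypothesis $w' \preceq w$ means exactly $w = w' h$ with $\ell(w) = \ell(w') + \ell(h)$ (Definition \ref{DefPartalOrder1}); the goal is then to show $h \preceq g$.

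First I would record the reduced factorization of $g$ coming from $w \in [1,g]$. Since $w \preceq g$ and $g$ is balanced, the equality $[1,g] = [1,g]_r$ (Definition \ref{DefBalancedElement}) gives $w \preceq_r g$, so there is $u \in G(e,e,n)$ with $g = u w$ and $\ell(g) = \ell(u) + \ell(w)$. Substituting $w = w' h$ and using $\ell(w) = \ell(w') + \ell(h)$ yields $g = u w' h$ together with the additive length identity $\ell(g) = \ell(u) + \ell(w') + \ell(h)$.

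Next I would regroup this factorization as $g = (u w') h$ and read off that $h$ is a right-divisor of $g$. This uses only the elementary subadditivity $\ell(ab) \leq \ell(a) + \ell(b)$ of the length over $\mathbf{X}$ (concatenate reduced words): indeed $\ell(g) \leq \ell(uw') + \ell(h) \leq \ell(u) + \ell(w') + \ell(h) = \ell(g)$, so both inequalities are equalities and $\ell(g) = \ell(uw') + \ell(h)$. By Definition \ref{DefPartialOrder2} this says precisely $h \preceq_r g$, i.e. $h \in [1,g]_r$. Applying the balanced hypothesis once more, $[1,g]_r = [1,g]$, so $h \in [1,g]$, that is $h = (w')^{-1}w \preceq g$, as required.

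There is essentially no serious obstacle here; the argument is a short bookkeeping of reduced factorizations, which is why it is flagged as obvious. The only points that require care are applying the balanced equality $[1,g] = [1,g]_r$ in both directions (to pass from $w \preceq g$ to $w \preceq_r g$ at the start, and from $h \preceq_r g$ to $h \preceq g$ at the end), and invoking subadditivity of $\ell$ to collapse the three-term length identity into the two-term identity that witnesses $h \preceq_r g$.
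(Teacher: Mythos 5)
Your proof is correct: the paper states this lemma without proof (calling it obvious), and your argument --- passing from $w \preceq g$ to $w \preceq_r g$ by balancedness, substituting the reduced factorization $w = w'h$ with $h = (w')^{-1}w$, collapsing the three-term length identity $\ell(g) = \ell(u) + \ell(w') + \ell(h)$ via subadditivity of $\ell$ to obtain $h \preceq_r g$, and then applying $[1,g]_r = [1,g]$ once more --- is exactly the bookkeeping the author leaves implicit. The only remark worth making is that you never actually use the hypothesis $w' \in [1,g]$, which is harmless since it follows from $w' \preceq w \preceq g$ by transitivity.
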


In order to prove that $\lambda^k$ is balanced, we first check the following.

\begin{lemma}\label{LemmaforPropBalanced}

If $w \in D_k$, we have $w^{-1}\lambda^k \in D_k$ and $\lambda^k w^{-1} \in D_k$.

\end{lemma}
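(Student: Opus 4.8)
The plan is to argue entirely through the matrix characterization of $D$ provided by Proposition \ref{PropZ'=1orZetae}: an element of $G(e,e,n)$ lies in $D$ precisely when every nonzero entry sitting in $Z'$ — equivalently, every nonzero entry that is \emph{not} a bullet — is equal to $1$ or $\zeta_e$. So I would first encode an arbitrary $w$ by its monomial data, recording the underlying permutation $\sigma$ (the unique nonzero entry of row $i$ lies in column $\sigma(i)$) together with the root of unity $\theta_i := w[i,\sigma(i)] \in \mu_e$. Recall that $\lambda$ is the diagonal matrix with $\lambda[1,1] = \zeta_e^{-(n-1)}$ and $\lambda[i,i] = \zeta_e$ for $2 \le i \le n$. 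Then $w^{-1}$ has permutation $\sigma^{-1}$ with root $\theta_{\sigma^{-1}(i)}^{-1}$ in row $i$, and multiplying by the diagonal matrix $\lambda$ leaves the support unchanged while merely rescaling. Hence both $\lambda w^{-1}$ and $w^{-1}\lambda$ have permutation $\sigma^{-1}$; the root of $\lambda w^{-1}$ in row $i$ is $\lambda[i,i]\,\theta_{\sigma^{-1}(i)}^{-1}$, and the root of $w^{-1}\lambda$ in row $i$ is $\lambda[\sigma^{-1}(i),\sigma^{-1}(i)]\,\theta_{\sigma^{-1}(i)}^{-1}$.

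The key preliminary step is to locate the bullets of these two matrices. Reading Remark \ref{RemBulletsinZ} together with the definition of $Z(w)$, a nonzero entry $w[i,\sigma(i)]$ is a bullet exactly when the open north-west rectangle $[1,i-1]\times[1,\sigma(i)-1]$ contains no nonzero entry; that is, the bullets are the north-west minimal points of the permutation support. I would extract two consequences. First, this condition is manifestly symmetric under transposition of the support, so the bullets of $w^{-1}$ are the transposes of the bullets of $w$; concretely, row $i$ of $w^{-1}$ is a bullet row if and only if $\sigma^{-1}(i)$ is a bullet row of $w$. Second, row $1$ is always a bullet row, since its north-west rectangle is empty; hence every non-bullet row is $\ge 2$. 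Finally, since multiplying by the diagonal $\lambda$ does not move the support, the bullets of $\lambda w^{-1}$ and of $w^{-1}\lambda$ coincide with those of $w^{-1}$.

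With these facts the verification is short. Fix a non-bullet row $i$ of $w^{-1}$; by the above $i \ge 2$, and $j := \sigma^{-1}(i)$ is a non-bullet row of $w$, so $j \ge 2$ and, because $w \in D$, the entry $\theta_j$ is $1$ or $\zeta_e$. For $\lambda w^{-1}$ the root in this row is $\lambda[i,i]\,\theta_j^{-1} = \zeta_e\,\theta_j^{-1} \in \{1,\zeta_e\}$, while for $w^{-1}\lambda$ it is $\lambda[j,j]\,\theta_j^{-1} = \zeta_e\,\theta_j^{-1} \in \{1,\zeta_e\}$. By Proposition \ref{PropZ'=1orZetae} this yields $\lambda w^{-1}\in D$ and $w^{-1}\lambda\in D$. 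The point requiring care — and the crux of the argument — is the anomalous diagonal entry $\lambda[1,1]=\zeta_e^{-(n-1)}$, which in general is neither $1$ nor $\zeta_e$; I must ensure it never contributes to a $Z'$ entry of either product. This is exactly what the second observation guarantees: the factor $\lambda[1,1]$ enters only the root of the bullet row $i=1$ of $\lambda w^{-1}$, and only the root of the row $i=\sigma(1)$ of $w^{-1}\lambda$ (where $\sigma^{-1}(i)=1$), and this latter row is a bullet row of $w^{-1}$ precisely because row $1$ is a bullet row of $w$. Since bullet entries are exempt from the $Z'$ condition, the exceptional value causes no trouble, and the proof is complete.
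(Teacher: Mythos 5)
Your proof is correct and takes essentially the same route as the paper's: the paper likewise identifies $w^{-1}$ with $\overline{t_w}$, notes via Remark \ref{RemBulletsinZ} that the bullets of $w^{-1}$ are the transposed bullets of $w$, observes that the non-bullet entries of $w^{-1}$ lie in $\{1,\zeta_e^{-1}\}$, and concludes by the diagonal rescaling under $\lambda$. The only difference is one of detail: you make explicit the point the paper leaves implicit, namely that the anomalous entry $\lambda[1,1]=\zeta_e^{-(n-1)}$ only ever scales bullet entries (row $1$ of $\lambda w^{-1}$, the column-$1$ entry of $w^{-1}\lambda$), which is a clarification of the same argument rather than a departure from it.
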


\begin{proof}

Let $w \in D_k$. We show that $w^{-1}\lambda^k = \overline{t_w}\lambda^k \in D_k$ and $\lambda^k w^{-1} = \lambda^k \overline{t_w} \in D_k$, where $\overline{t_w}$ is the complex conjugate of the transpose $t_w$ of the matrix $w$. We use the matrix form of an element of $D_k$. If $[i,c]$ is a bullet of $w$, then $[c,i]$ is a bullet of $\overline{t_w} = w^{-1}$ and $w^{-1}[c,i] = \overline{w[i,c]}$. Then, if $[i,c] \in Z'(w^{-1})$, we have $[c,i] \in Z'(w)$. Since $w \in D_k$, we have $w[c,i] \in \left\{ 0,1,\zeta_e^k \right\}$ whenever $[c,i] \in Z'(w)$. Then $w^{-1}[i,c] \in \left\{ 0,1,\zeta_e^{-k} \right\}$ whenever $[i,c] \in Z'(w^{-1})$. Multiplying $w^{-1}$ by $\lambda^k$, we get that $(w^{-1}\lambda^k)[i,c]$ and $(\lambda^k w^{-1})[i,c]$ are equal to $0$, $1$, or $\zeta_e^k$ whenever $[i,c] \in Z'(w^{-1}\lambda^k)$ and $Z'(\lambda^kw^{-1})$. Hence $w^{-1}\lambda^k$ and $\lambda^kw^{-1}$ belong to $D_k$.

\end{proof}

\begin{example}

We illustrate the idea of the proof of Lemma \ref{LemmaforPropBalanced} for $k=1$.\\
Consider $w \in D_1$ as follows and show that $\overline{t_w}\lambda \in D_1$:
$w =
\left(
\begin{BMAT}{ccccc}{ccccc}
0 & 0 & 0  & 0 & \encircle{$1$} \\
0 & \encircle{$1$} & 0 & 0 & 0 \\
0 & 0 & 0 & \encircle{$\zeta_3$} & 0 \\
\encircle{$\zeta_3^2$} & 0 & 0 & 0 & 0 \\
0 & 0 & \boxed{1} & 0 & 0
\addpath{(0,1,0)ruururrru}
\end{BMAT}
\right)$ $\overset{t_w}{\longrightarrow}$ $\left(
\begin{BMAT}{ccccc}{ccccc}
0 & 0 & 0  & \encircle{$\zeta_3^2$} & 0 \\
0 & \encircle{$1$} & 0 & 0 & 0 \\
0 & 0 & 0 & 0 & \boxed{1} \\
0 & 0 &  \boxed{\zeta_3} & 0 & 0 \\
\encircle{$1$} & 0 & 0 & 0 & 0
\addpath{(0,0,0)ruuururrur}
\end{BMAT}
\right)$ $\overset{\overline{t_w}}{\longrightarrow}$ $\left(
\begin{BMAT}{ccccc}{ccccc}
0 & 0 & 0  & \encircle{$\zeta_3$} & 0 \\
0 & \encircle{$1$} & 0 & 0 & 0 \\
0 & 0 & 0 & 0 & \boxed{1} \\
0 & 0 & \boxed{\zeta_3^2} & 0 & 0 \\
\encircle{$1$} & 0 & 0 & 0 & 0
\addpath{(0,0,0)ruuururrur}
\end{BMAT}
\right)$ $\overset{\overline{t_w}\lambda}{\longrightarrow}$ $\left(
\begin{BMAT}{ccccc}{ccccc}
0 & 0 & 0  & \encircle{$1$} & 0 \\
0 & \encircle{$\zeta_3$} & 0 & 0 & 0 \\
0 & 0 & 0 & 0 & \boxed{\zeta_3} \\
0 & 0 & \boxed{1} & 0 & 0 \\
\encircle{$\zeta_3$} & 0 & 0 & 0 & 0
\addpath{(0,0,0)ruuururrur}
\end{BMAT}
\right)$.

\end{example}

\medskip

\begin{proposition}\label{Propbalanced}

The element $\lambda^k$ is balanced.

\end{proposition}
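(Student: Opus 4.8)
The plan is to prove that $\lambda$ is balanced by establishing directly the set equality $[1,\lambda] = [1,\lambda]_r$ between its left and right divisors. Almost all of the work has already been absorbed into the preceding results: Proposition \ref{PropDisDivLambda} identifies the left-divisor set as $[1,\lambda] = D$, and Lemma \ref{LemmaforPropBalanced} records that $D$ is stable under both of the maps $w \mapsto w^{-1}\lambda$ and $w \mapsto \lambda w^{-1}$. So the remaining task is purely formal: translate right-divisibility by $\lambda$ into a left-divisibility statement about the complementary factor, and then feed this through Lemma \ref{LemmaforPropBalanced}.

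The key elementary observation is that, for any $w \in G(e,e,n)$, the conditions $w \preceq_r \lambda$ and $\lambda w^{-1} \preceq \lambda$ are equivalent, both being equivalent to the single length-additivity equality $\ell(\lambda) = \ell(\lambda w^{-1}) + \ell(w)$. Indeed, the group identity $\lambda = (\lambda w^{-1})\,w$ is the unique factorisation involved: reading it with cofactor $\lambda w^{-1}$ and invoking Definition \ref{DefPartialOrder2} yields $w \preceq_r \lambda$, while reading it with cofactor $w$ (noting $(\lambda w^{-1})^{-1}\lambda = w$) and invoking Definition \ref{DefPartalOrder1} yields $\lambda w^{-1} \preceq \lambda$. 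Granting this, the two inclusions follow immediately. For $[1,\lambda] \subseteq [1,\lambda]_r$: if $w \in [1,\lambda] = D$, then $\lambda w^{-1} \in D = [1,\lambda]$ by Lemma \ref{LemmaforPropBalanced}, so $\lambda w^{-1} \preceq \lambda$, whence $w \preceq_r \lambda$ by the observation. For $[1,\lambda]_r \subseteq [1,\lambda]$: if $w \preceq_r \lambda$, then by the observation $\lambda w^{-1} \preceq \lambda$, i.e. $\lambda w^{-1} \in [1,\lambda] = D$; applying Lemma \ref{LemmaforPropBalanced} to $\lambda w^{-1} \in D$ gives $(\lambda w^{-1})^{-1}\lambda = w \in D = [1,\lambda]$. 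Combining the two inclusions gives $[1,\lambda] = [1,\lambda]_r$, which is exactly the statement that $\lambda$ is balanced.

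There is no genuinely hard step left once Proposition \ref{PropDisDivLambda} and Lemma \ref{LemmaforPropBalanced} are in hand; the proof is a short chain of set memberships. The only point that requires care is the bookkeeping with inverses and cofactors: one must correctly interpret the single factorisation $\lambda = (\lambda w^{-1})\,w$ simultaneously as a left-divisibility fact about $\lambda w^{-1}$ and as a right-divisibility fact about $w$, and apply the correct one of the two conclusions of Lemma \ref{LemmaforPropBalanced} (the map $w\mapsto \lambda w^{-1}$ for the forward inclusion, the map $v \mapsto v^{-1}\lambda$ for the reverse inclusion). I would write the argument so that the equivalence of the three length/divisibility conditions is stated once and then reused in both directions, keeping the manipulation transparent.
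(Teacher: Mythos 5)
Your proof is correct and takes essentially the same route as the paper's: both inclusions rest on Proposition \ref{PropDisDivLambda} ($[1,\lambda]=D$) together with the stability of $D$ under the maps $w \mapsto \lambda w^{-1}$ and $v \mapsto v^{-1}\lambda$ from Lemma \ref{LemmaforPropBalanced}. The only difference is presentational: you isolate the equivalence $w \preceq_r \lambda \Leftrightarrow \lambda w^{-1} \preceq \lambda$ as a single reusable observation, whereas the paper performs the same cofactor bookkeeping inline in each direction.
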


\begin{proof}

Suppose that $w \preceq \lambda^k$. By Proposition \ref{PropDisDivLambda}, we have $w \in D_k$, so $\lambda^k w^{-1}$ is in $D_k$ by Lemma \ref{LemmaforPropBalanced}. Hence $\lambda^k = (\lambda^k w^{-1})w$ satisfies $\ell(\lambda^k w^{-1}) + \ell(w) = \ell(\lambda^k)$, namely $w \preceq_r \lambda^k$. 

Conversely, suppose that $w \preceq_r \lambda^k$. We have $\lambda^k = w' w$ with $w' \in G(e,e,n)$ and $\ell(w') + \ell(w) = \ell(\lambda^k)$. It follows that $w' \in D_k$, then ${w'}^{-1} \lambda^k \in D_k$ by Lemma \ref{LemmaforPropBalanced}. Since  $w = {w'}^{-1}\lambda^k$, we have $w \in D_k$, namely $w \preceq \lambda^k$.

\end{proof}

In the following theorem, we show that the elements $\lambda^k$ are the only balanced elements of maximal length of $G(e,e,n)$ for $1 \leq k \leq e-1$.


\begin{theorem}\label{PropAllBalancedofMaxLength}

The balanced elements of $G(e,e,n)$  that are of maximal length are precisely $\lambda^k$ with $1 \leq k \leq e-1$. The set $D_k$ of the divisors of each $\lambda^k$ is characterized in Propositions \ref{PropCaractD} and \ref{PropZ'=1orZetae}.

\end{theorem}

\begin{proof}

Let $w \in G(e,e,n)$ be an element of $G(e,e,n)$ of maximal length, namely by Proposition \ref{PropMaxLength}, a diagonal matrix such that for $2 \leq i \leq n$, $w[i,i]$ is an $e$-th root of unity different from $1$. Analogously to Proposition \ref{PropZ'=1orZetae}, a left divisor $w'$ of $w$ satisfies that for all $2 \leq i \leq n$, if $[i,c]$ is not a bullet of $w'$, then $w'[i,c]$ is either $0$, $1$, or $w[i,i]$.

By Proposition \ref{Propbalanced}, we already have that $\lambda^k$ is balanced for $1 \leq k \leq e-1$. Suppose that $w$ is of maximal length such that $w[i,i] \neq w[j,j]$ for $2 \leq i,j \leq n$ and $i \neq j$. Let $s_{ij}$ be the transposition matrix. We have $s_{ij}[1,1]$ is nonzero and so, by Remark \ref{RemBulletsFirstRowColumn}, $[1,1]$ is the unique bullet of $s_{ij}$. Hence if $[j,d]$ is not a bullet of $s_{ij}$, then $s_{ij}[j,d]$ is either $0$ or $1$. So, $s_{ij}$ left-divides $w$. Hence $w' := s_{ij}^{-1}w = s_{ij}w$ right-divides $w$. We also have $w'[1,1]$ is nonzero, and so $[1,1]$ is the unique bullet of $w'$. Thus, $[j,i]$ is not a bullet and $w'[j,i] = w[i,i]$ is neither $0$, $1$, nor $w[j,j]$ (which was assumed different from $w[i,i]$). So $w' \npreceq w$, and so $w$ is not balanced.

It follows that the balanced elements of $G(e,e,n)$ that are of maximal length are precisely $\lambda^k$ with $1 \leq k \leq e-1$.

\end{proof}

\section{Interval Garside structures}\label{SectionGarsideStructures}

In this section, we construct the monoid $M([1,\lambda^k])$ associated with each of the intervals $[1,\lambda^k]$ constructed in the previous section with $1 \leq k \leq e-1$. By Proposition \ref{PropAllBalancedofMaxLength}, $\lambda^k$ is balanced. Hence, by Theorem \ref{TheoremMichelGarside}, in order to prove that $M([1,\lambda^k])$ is a Garside monoid, it remains to show that both posets $([1,\lambda^k],\preceq)$ and $([1,\lambda^k],\preceq_r)$ are lattices. This is stated in Corollary \ref{CorBothPosetsLattices}. The interval structures are given \mbox{in Theorem \ref{TheoremIntervalStructure}.}

\subsection{Least common multiples}

Let $1 \leq k \leq e-1$ and let $w \in [1,\lambda^k]$. For each $1 \leq i \leq n$ there exists a unique $c_i$ such that $w[i,c_i] \neq 0$. We denote $w[i,c_i]$ by $a_i$. We apply Lemma \ref{LemmaCaractDivlambda} to prove the following lemmas. The reader is invited to write the matrix form of $w$ to illustrate each step of the proof.

\begin{lemma}\label{tit0Divw}

Let $t_i \preceq w$ where $i \in \mathbb{Z}/e\mathbb{Z}$.

\begin{itemize}

\item If $c_1 < c_2$, then $t_kt_0 \preceq w$ and
\item if $c_2 < c_1$, then  $t_j \npreceq w$ for all $j$ with $j \neq i$.

\end{itemize}

Hence if $t_i \preceq w$ and $t_j \preceq w$ with $i,j \in \mathbb{Z}/e\mathbb{Z}$ and $i \neq j$, then $t_kt_0 \preceq w$.

\end{lemma}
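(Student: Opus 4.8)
The plan is to reduce the single-generator statement $t_i \preceq w$ to a length condition and then exploit the explicit criteria of Proposition~\ref{PropLengthdecreas}, afterwards bootstrapping to the two-letter word $t_kt_0$ by means of Lemma~\ref{LemmaCaractDivlambda}. Since $t_i$ has length one, $t_i \preceq w$ is equivalent to $\ell(t_i w) = \ell(w)-1$. As $w$ is monomial we have $c_1 \neq c_2$, so exactly one of $c_1 < c_2$ or $c_2 < c_1$ occurs, and I treat the two displayed bullets separately.

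I would first dispatch the case $c_2 < c_1$. Here part $3$ of Proposition~\ref{PropLengthdecreas} gives that $\ell(t_j w) = \ell(w)-1$ holds if and only if $a_1 = \zeta_e^{-j}$. Since $a_1 = w[1,c_1]$ is a fixed root of unity, the equation $a_1 = \zeta_e^{-j}$ has a unique solution $j \in \mathbb{Z}/e\mathbb{Z}$. Thus $t_i \preceq w$ forces $a_1 = \zeta_e^{-i}$, and for every $j \neq i$ we get $a_1 \neq \zeta_e^{-j}$, hence $t_j \npreceq w$; this settles the second bullet with no further work.

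Next comes the case $c_1 < c_2$. Part $2$ of Proposition~\ref{PropLengthdecreas} says $\ell(t_i w) = \ell(w)-1$ iff $a_2 \neq 1$, so from $t_i \preceq w$ I extract $a_2 \neq 1$, independently of $i$. I then claim $a_2 = \zeta_e^k$, and this is the crux. The point is that the row-$1$ entry $w[1,c_1]$ is a bullet, so every cell of row $1$ strictly to its right lies in $Z'(w)$; in particular $w[1,c_2] \in Z'(w)$ because $c_2 > c_1$. By the characterization in Remark~\ref{RemBulletsinZ} this prevents $w[2,c_2] = a_2$ from being a bullet. Hence $a_2$ is a non-bullet nonzero entry of $Z'(w)$, and since $w \in [1,\lambda^k] = D_k$ by Proposition~\ref{PropLambdaKBalanced} its value lies in $\{1,\zeta_e^k\}$; together with $a_2 \neq 1$ this yields $a_2 = \zeta_e^k$. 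I expect this region/bullet bookkeeping to be the only genuinely delicate step, everything else being routine length arithmetic.

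Finally I would verify $t_kt_0 \preceq w$ through Lemma~\ref{LemmaCaractDivlambda} applied to the reduced expression $\mathbf{t}_k\mathbf{t}_0$ of the diagonal matrix $t_kt_0 = \operatorname{diag}(\zeta_e^{-k},\zeta_e^{k},1,\dots,1)$, which is reduced by Proposition~\ref{PropREwRedExp}. The first step $\ell(t_k w) = \ell(w)-1$ follows again from part $2$ of Proposition~\ref{PropLengthdecreas} since $a_2 \neq 1$. For the second step I compute that left multiplication by $t_k$ interchanges rows $1$ and $2$ up to scalars, so in $t_k w$ the nonzero of row $1$ sits in column $c_2$ with value $\zeta_e^{-k}a_2 = 1$ while the nonzero of row $2$ sits in column $c_1$; the new leading columns therefore satisfy $c_1' = c_2 > c_1 = c_2'$, and part $3$ of Proposition~\ref{PropLengthdecreas} gives $\ell(t_0 t_k w) = \ell(t_k w)-1$ precisely because that row-$1$ value equals $\zeta_e^{0}=1$. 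This confirms $t_kt_0 \preceq w$ and establishes the first bullet. The concluding assertion is then immediate: if $t_i \preceq w$ and $t_j \preceq w$ with $i \neq j$, the case $c_2 < c_1$ is excluded by the second bullet, so $c_1 < c_2$, and the first bullet delivers $t_kt_0 \preceq w$.
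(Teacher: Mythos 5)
Your proof is correct and takes essentially the same route as the paper: the same dichotomy $c_1<c_2$ versus $c_2<c_1$, the same appeals to parts $2$ and $3$ of Proposition~\ref{PropLengthdecreas}, and the same two-step verification of $t_kt_0 \preceq w$ (first $\ell(t_kw)=\ell(w)-1$, then $\ell(t_0t_kw)=\ell(t_kw)-1$ after computing $(t_kw)[1,c_2]=\zeta_e^{-k}a_2=1$) via Lemma~\ref{LemmaCaractDivlambda}. Your only addition is to make explicit, using Remark~\ref{RemBulletsinZ}, why $a_2$ is a non-bullet entry of $Z'(w)$ when $c_1<c_2$, so that $[1,\lambda^k]=D_k$ forces $a_2\in\{1,\zeta_e^k\}$ --- a detail the paper leaves implicit in its citation of Proposition~\ref{PropZ'=1orZetae}.
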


\begin{proof}

\underline{Suppose $c_1 < c_2$.}\\
Since $a_1 = w[1,c_1]$ is nonzero , and above and to the left of $a_2 = w[2,c_2]$ (as $c_1 < c_2$), then $[2,c_2]$ is not a bullet. It belongs to $Z'(w)$. Since $w \in [1,\lambda^k]$ and $[2,c_2] \in Z'(w)$, by Proposition \ref{PropZ'=1orZetae}, $a_2 = 1$ or $\zeta_e^k$. Since $t_i \preceq w$, we have $\ell(t_iw) = \ell(w) - 1$. Hence by $2$ of \mbox{Proposition \ref{PropLengthdecreas}}, we get $a_2 \neq 1$. Hence $a_2 = \zeta_e^k$.
Again by $2$ of Proposition \ref{PropLengthdecreas}, since $a_2 \neq 1$, we have $\ell(t_kw) = \ell(w) - 1$. Let $w' := t_kw$. We have $w'[1,c_2] = \zeta_e^{-k}a_2 = \zeta_e^{-k} \zeta_e^{k} = 1$. Hence by $3$ of Proposition $\ref{PropLengthdecreas}$, $\ell(t_0w') = \ell(w') - 1$. It follows that $t_kt_0 \preceq w$.\\
\underline{Suppose $c_2 < c_1$.}\\
Since $t_i \preceq w$, we have $\ell(t_iw) = \ell(w) - 1$. Hence by $3$ of Proposition \ref{PropLengthdecreas}, we have $a_1 = \zeta_e^{-i}$. If there exists $j \in \mathbb{Z}/e\mathbb{Z}$ with $j \neq i$ such that $t_j \preceq w$, then $\ell(t_jw) = \ell(w) - 1$. Again by $3$ of Proposition \ref{PropLengthdecreas}, we have $a_1 = \zeta_e^{-j}$. Thus, $i = j$ which contradicts the hypothesis.\\
The last statement of the lemma follows immediately.

\end{proof}

\begin{lemma}\label{tis3Divw}

If $t_i \preceq w$ with $i \in \mathbb{Z}/e\mathbb{Z}$ and $s_3 \preceq w$, then $s_3t_is_3 = t_is_3t_i \preceq w$.

\end{lemma}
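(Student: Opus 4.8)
The plan is to exhibit $s_3t_is_3$ as a left divisor of $w$ via the divisibility criterion of Lemma \ref{LemmaCaractDivlambda}, applied to a convenient reduced expression. First I would record that $s_3t_is_3 = t_is_3t_i$ is the ``twisted transposition'' exchanging the first and third coordinates (as a matrix it sends $e_1 \mapsto \zeta_e^i e_3$, $e_3 \mapsto \zeta_e^{-i}e_1$, and fixes $e_2$), and in particular that $\mathbf{t}_i\mathbf{s}_3\mathbf{t}_i$ is a reduced expression over $\mathbf{X}$ of this element, of length $3$. Then, by Lemma \ref{LemmaCaractDivlambda} applied with $\mathbf{x}_1 = \mathbf{t}_i$, $\mathbf{x}_2 = \mathbf{s}_3$, $\mathbf{x}_3 = \mathbf{t}_i$, it suffices to verify the three successive length drops $\ell(t_iw) = \ell(w)-1$, $\ell(s_3t_iw) = \ell(t_iw)-1$, and $\ell(t_is_3t_iw) = \ell(s_3t_iw)-1$. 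The first is exactly the hypothesis $t_i \preceq w$, so only the last two require work (choosing instead the reduced expression $\mathbf{s}_3\mathbf{t}_i\mathbf{s}_3$ would make the hypothesis $s_3 \preceq w$ the free step, but two conditions would remain either way).

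For the remaining two conditions I would track, through the successive left multiplications, the column $c_j$ and value $a_j$ of the unique nonzero entry in each of the rows $j = 1,2,3$. Left multiplication by $t_i$ replaces rows $1,2$ by $\zeta_e^{-i}\cdot(\text{row }2)$ and $\zeta_e^i\cdot(\text{row }1)$ respectively, interchanging their columns and twisting their values, while $s_3$ simply swaps rows $2$ and $3$. Thus after $t_i$ the entry of row $2$ sits in column $c_1$ with value $\zeta_e^i a_1$ (row $3$ staying in column $c_3$ with value $a_3$), and after $s_3t_i$ the entries of rows $1,2$ sit in columns $c_2, c_3$ with values $\zeta_e^{-i}a_2$ and $a_3$. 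Each of the two remaining length conditions is then read off from Proposition \ref{PropLengthdecreas}, whose relevant alternative ($1(a)$ versus $1(b)$, or part $2$ versus part $3$) is selected by the relative order of the two columns being compared.

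The heart of the argument is therefore a case analysis on the six orderings of $c_1, c_2, c_3$, fed by the two hypotheses through Proposition \ref{PropLengthdecreas}: from $t_i \preceq w$ one gets $a_2 \neq 1$ when $c_1 < c_2$ and $a_1 = \zeta_e^{-i}$ when $c_1 > c_2$, and from $s_3 \preceq w$ one gets $a_3 \neq 1$ when $c_2 < c_3$ and $a_2 = 1$ when $c_2 > c_3$. I expect the main (though routine) obstacle to be keeping this bookkeeping clean, in particular matching the correct alternative of Proposition \ref{PropLengthdecreas} to each moved entry. The two orderings in which $c_2 = \max(c_1,c_2,c_3)$ force simultaneously $a_2 \neq 1$ (from $c_1 < c_2$) and $a_2 = 1$ (from $c_2 > c_3$), so they cannot occur and are vacuous; in each of the four surviving orderings the information extracted above is precisely what Proposition \ref{PropLengthdecreas} demands for the two length drops, so the criterion of Lemma \ref{LemmaCaractDivlambda} is satisfied and $s_3t_is_3 \preceq w$ follows. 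I would remark that this argument in fact uses only $t_i \preceq w$ and $s_3 \preceq w$, and not the full membership $w \in [1,\lambda^k]$.
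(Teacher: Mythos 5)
Your proof is correct and follows essentially the same route as the paper: divisibility is tested via Lemma \ref{LemmaCaractDivlambda}, with each length drop read off from Proposition \ref{PropLengthdecreas} after tracking the columns and values of the first three rows, and the configurations with $c_2$ maximal are discarded by the same contradiction (there $t_i \preceq w$ forces $a_2 \neq 1$ while $s_3 \preceq w$ forces $a_2 = 1$) that the paper uses to exclude $c_3 < c_2$ within its case $c_1 < c_2$. If anything your version is slightly more careful than the paper's: the paper records only the two second-step drops $\ell(t_i s_3 w) = \ell(s_3 w) - 1$ and $\ell(s_3 t_i w) = \ell(t_i w) - 1$ and leaves the third drop of Lemma \ref{LemmaCaractDivlambda} implicit, whereas you verify all three; and your closing remark is accurate, since the paper invokes $w \in [1,\lambda^k]$ (via Proposition \ref{PropZ'=1orZetae}) only to pin $a_2, a_3$ to $\{1, \zeta_e^k\}$ when the argument needs nothing beyond $a_2, a_3 \neq 1$, which the hypotheses $t_i \preceq w$ and $s_3 \preceq w$ already yield through Proposition \ref{PropLengthdecreas}.
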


\begin{proof}

Set $w' := s_3w$ and $w'':=t_iw'$.\\
\underline{Suppose $c_1 < c_2$.}\\
Since $w \in [1,\lambda^k]$ and $[2,c_2] \in Z'(w)$, by Proposition \ref{PropZ'=1orZetae}, we get $a_2 = 1$ or $\zeta_e^k$. Since $t_i \preceq w$, we have $\ell(t_iw) = \ell(w) - 1$. Thus, by $2$ of Proposition \ref{PropLengthdecreas}, we get $a_2 \neq 1$. Hence $a_2 = \zeta_e^k$.\\
Suppose that $c_3 < c_2$. Since $s_3 \preceq w$, we have $\ell(s_3w) = \ell(w) - 1$. Hence by $1(b)$ of Proposition \ref{PropLengthdecreas}, $a_2 = 1$ which is not the case. So instead it must be that $c_2 < c_3$. Assume $c_2 < c_3$. Since $c_1 < c_2 < c_3$, $a_1 = w[1,c_1]$ is a nonzero entry which is above and to the left of $a_3 = w[3,c_3]$. Then $[3,c_3]$ is in $Z'(w)$. Since $w \in [1, \lambda^k]$ and $[3,c_3] \in Z'(w)$, we have $a_3 = 1$ or $\zeta_e^k$. By $1(a)$ of Proposition \ref{PropLengthdecreas}, we have $a_3 \neq 1$. Hence $a_3$ is equal to $\zeta_e^k$. Now, we prove that $s_3t_is_3 \preceq w$ by applying Lemma \ref{LemmaCaractDivlambda}. Indeed, since $a_3 \neq 1$, by $2$ of Proposition \ref{PropLengthdecreas}, we have $\ell(t_i w') = \ell(w') -1$, and since $a_2 \neq 1$, by $1(a)$ of Proposition \ref{PropLengthdecreas}, we have $\ell(s_3 w'') = \ell(w'') -1$.\\
\underline{Suppose $c_2 < c_1$.}\\
Since $\ell(t_iw) = \ell(w) -1$, by $3$ of Proposition \ref{PropLengthdecreas}, we have $a_1 = \zeta_e^{-i}$.
\begin{itemize}

\item \underline{Assume $c_2 < c_3$.}\\
Since $\ell(s_3 w) = \ell(w) -1$, by $1(a)$ of Proposition \ref{PropLengthdecreas}, we have $a_3 \neq 1$. We have $\ell(t_iw') = \ell(w') - 1$ for both cases $c_1 < c_3$ and $c_3 < c_1$. Actually, if $c_1 < c_3$, since $a_3 \neq 1$, by $2$ of Proposition \ref{PropLengthdecreas}, we have $\ell(t_iw') = \ell(w') -1$, and if $c_3 < c_1$, since $a_1 = \zeta_e^{-i}$, by $3$ of Proposition \ref{PropLengthdecreas}, $\ell(t_i w') = \ell(w') -1$. Now, since $\zeta_e^{i}a_1 = \zeta_e^{i}\zeta_e^{-i} = 1$, by $1(b)$ of Proposition \ref{PropLengthdecreas}, we have $\ell(s_3 w'') = \ell(w'') -1$.

\item \underline{Assume $c_3 < c_2$.}\\
Since $a_1 = \zeta_e^{-i}$, by $3$ of Proposition \ref{PropLengthdecreas}, $\ell(t_iw') = \ell(w') -1$. Since $\zeta_e^{i}a_1 = 1$, by $1(b)$ of Proposition \ref{PropLengthdecreas}, we have $\ell(s_3 w'') = \ell(w'') -1$.

\end{itemize}

\end{proof}

\begin{lemma}\label{tisjDivw}

If $t_i \preceq w$ with $i \in \mathbb{Z}/e\mathbb{Z}$ and $s_j \preceq w$ with $4 \leq j \leq n$, then \mbox{$t_i s_j = s_j t_i \preceq w$}.

\end{lemma}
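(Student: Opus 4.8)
The plan is to apply the divisibility criterion of Lemma \ref{LemmaCaractDivlambda} to the two-letter word $\mathbf{t}_i\mathbf{s}_j$. First I would record the commutation: the matrix $t_i$ acts nontrivially only on rows and columns $1$ and $2$, while $s_j$ (with $4 \leq j \leq n$) acts only on rows and columns $j-1$ and $j$; since $j \geq 4$ these supports are disjoint, so $t_i s_j = s_j t_i$, which is precisely the relation $\tilde{s}_j\tilde{t}_i = \tilde{t}_i\tilde{s}_j$. Moreover $t_i s_j$ acts nontrivially on the four distinct rows $1,2,j-1,j$, so it is neither the identity nor one of the generators; hence $\ell(t_i s_j) = 2$ and $\mathbf{t}_i\mathbf{s}_j$ is a reduced expression of it.

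By Lemma \ref{LemmaCaractDivlambda} (with $r=2$, $x_1 = t_i$, $x_2 = s_j$), proving $t_i s_j \preceq w$ reduces to the two length conditions $\ell(t_i w) = \ell(w) - 1$ and $\ell(s_j t_i w) = \ell(t_i w) - 1$. The first is immediate from the hypothesis $t_i \preceq w$.

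The heart of the matter is the second condition, and here I would use the disjoint-support observation again. Left multiplication by $t_i$ alters only rows $1$ and $2$ of $w$, so every row $m \geq 3$ of $t_i w$ coincides with the corresponding row of $w$. As $j \geq 4$, both rows $j-1$ and $j$ are unchanged, whence $t_i w$ and $w$ have the same column positions $c_{j-1}, c_j$ of their nonzero entries in these two rows and the same entries $a_{j-1}, a_j$. Consequently, in whichever of the cases $c_{j-1} < c_j$ or $c_{j-1} > c_j$ occurs, the criterion of part $1$ of Proposition \ref{PropLengthdecreas} for $\ell(s_j \cdot) = \ell(\cdot) - 1$ (which reads off only the sign of $c_{j-1} - c_j$ together with $a_j$ or $a_{j-1}$) evaluates identically on $t_i w$ and on $w$. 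Since $s_j \preceq w$ gives $\ell(s_j w) = \ell(w) - 1$, the criterion is satisfied, so $\ell(s_j t_i w) = \ell(t_i w) - 1$, and Lemma \ref{LemmaCaractDivlambda} yields $t_i s_j \preceq w$.

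I do not expect a genuine obstacle here: the whole point is that $t_i$ and $s_j$ act on disjoint rows, which decouples the two length computations so that neither the $c_1$-versus-$c_2$ case split used in Lemmas \ref{tit0Divw} and \ref{tis3Divw} nor the hypothesis $w \in [1,\lambda^k]$ is actually needed. The only care required is the bookkeeping at the start: verifying that $\mathbf{t}_i\mathbf{s}_j$ is genuinely reduced, so that Lemma \ref{LemmaCaractDivlambda} applies with the ordering convention $x_1 = t_i$, $x_2 = s_j$.
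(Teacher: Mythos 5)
Your proof is correct, and it uses the same toolkit as the paper --- Lemma \ref{LemmaCaractDivlambda} combined with the length criteria of Proposition \ref{PropLengthdecreas}, with the disjointness of the supports of $t_i$ and $s_j$ as the engine --- but the execution is organized differently, and more cleanly. The paper factors in the opposite order: it applies Lemma \ref{LemmaCaractDivlambda} to the expression $\mathbf{s}_j\mathbf{t}_i$, distinguishes four cases according to the signs of $c_1-c_2$ and $c_{j-1}-c_j$, and in the representative case extracts the explicit conditions $a_2 \neq 1$ (from $t_i \preceq w$, via part $2$ of Proposition \ref{PropLengthdecreas}) and $a_j \neq 1$ (from $s_j \preceq w$, via part $1(a)$), then re-checks the $t_i$-criterion on $w' = s_jw$ using the fact that rows $1$ and $2$ are untouched by $s_j$. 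You instead apply the lemma to $\mathbf{t}_i\mathbf{s}_j$ and observe that the $s_j$-criterion of part $1$ depends only on the data $(c_{j-1}, c_j, a_{j-1}, a_j)$ of rows $j-1$ and $j$, which left multiplication by $t_i$ leaves unchanged since $j \geq 4$; so the criterion transfers verbatim from $w$ to $t_iw$ and no case split is needed at all. What your version buys is uniformity: the fourfold case analysis disappears, and you make explicit that neither the $c_1$-versus-$c_2$ dichotomy nor the hypothesis $w \in [1,\lambda^k]$ plays any role (the paper's proof of this particular lemma likewise never invokes $w \in [1,\lambda^k]$, unlike Lemmas \ref{tit0Divw} and \ref{tis3Divw}, but it does not say so). Your preliminary bookkeeping --- that $t_is_j = s_jt_i$ by disjoint supports and that $\mathbf{t}_i\mathbf{s}_j$ is reduced of length $2$, so Lemma \ref{LemmaCaractDivlambda} genuinely applies --- is exactly the care the argument requires, and it is sound.
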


\begin{proof}

We distinguish four different cases: case 1: $c_1 < c_2$ and $c_{j-1} < c_j$, case 2: $c_1 < c_2$ and $c_{j} < c_{j-1}$, case 3: $c_2 < c_1$ and $c_{j-1} < c_j$, and case 4: $c_2 < c_1$ and $c_{j} < c_{j-1}$. The proof is similar to the proofs of Lemmas \ref{tit0Divw} and \ref{tis3Divw} so we prove that $s_jt_i \preceq w$ only for the first case. Suppose that $c_1 < c_2$ and $c_{j-1} < c_j$. Since $t_i \preceq w$, we have $\ell(t_iw) = \ell(w) -1$. Hence, by $2$ of Proposition \ref{PropLengthdecreas}, we have $a_2 \neq 1$. Also, since $s_j \preceq w$, we have $\ell(s_jw) = \ell(w) -1$. Hence, by $1(a)$ of Proposition \ref{PropLengthdecreas}, we have $a_j \neq 1$. Set $w' := s_jw$. Since $a_2 \neq 1$, then $\ell(t_iw') = \ell(w') -1$. Hence $s_j t_i \preceq w$.

\end{proof}

\noindent The proof of the following lemma is similar to the proofs of Lemmas \ref{tis3Divw} and \ref{tisjDivw} and is left to the reader.

\begin{lemma}\label{sis(i+1)sisjdivw}

If $s_i \preceq w$ and $s_{i+1} \preceq w$ for $3 \leq i \leq n-1$, then \mbox{$s_is_{i+1}s_i = s_{i+1}s_is_{i+1} \preceq w$}, and if $s_i \preceq w$ and $s_j \preceq w$ for $3 \leq i,j \leq n$ and $|i-j| > 1$, then $s_is_j = s_js_i \preceq w$.

\end{lemma}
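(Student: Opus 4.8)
The plan is to prove both divisibilities through Lemma \ref{LemmaCaractDivlambda}: to show that a word $\mathbf{x}_1\cdots\mathbf{x}_r$ which is reduced over $\mathbf{X}$ divides $w$, it suffices to check that left-multiplying $w$ successively by $x_1,x_2,\dots,x_r$ strictly decreases the length at each step, and every such step is read off directly from Proposition \ref{PropLengthdecreas}. Following the notation of the section, $a_m=w[m,c_m]$ is the unique nonzero entry in row $m$, sitting in column $c_m$. The key simplification I would exploit is that left multiplication by an $s$-generator merely swaps two rows, carrying the column positions and the values of the nonzero entries along unchanged; consequently only the relative order of the columns and the question of whether a given entry equals $1$ will ever enter the length criteria, and the restriction $w\in[1,\lambda^k]$ (Proposition \ref{PropZ'=1orZetae}) is not actually needed here.

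For the braid relation, first I would note that $s_is_{i+1}s_i$ represents the transposition $(i-1,i+1)$, whose Coxeter length in $S_n$, and hence whose length over $\mathbf{X}$, equals $3$; so $\mathbf{s}_i\mathbf{s}_{i+1}\mathbf{s}_i$ is a reduced expression and Lemma \ref{LemmaCaractDivlambda} applies. The three rows $i-1,i,i+1$ occupy three distinct columns $c_{i-1},c_i,c_{i+1}$, giving six possible orderings. The hypotheses $s_i\preceq w$ and $s_{i+1}\preceq w$ translate, via part $1$ of Proposition \ref{PropLengthdecreas}, into constraints on $a_{i-1},a_i,a_{i+1}$ depending on these orderings. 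The two orderings in which $c_i$ is the largest of the three are ruled out at once, since $c_{i-1}<c_i$ forces $a_i\neq1$ by $1(a)$ while $c_{i+1}<c_i$ forces $a_i=1$ by $1(b)$, a contradiction. For each of the four surviving orderings ($c_i$ smallest, or $c_i$ in the middle) I would track rows $i-1,i,i+1$ through the three multiplications $w\mapsto s_iw\mapsto s_{i+1}s_iw\mapsto s_is_{i+1}s_iw$, applying the appropriate sub-case $1(a)$ or $1(b)$ of Proposition \ref{PropLengthdecreas} to the updated configuration at each stage; in every case the inequalities extracted from the two hypotheses are exactly what is needed to keep the length dropping by one. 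Lemma \ref{LemmaCaractDivlambda} then yields $s_is_{i+1}s_i\preceq w$, and $s_is_{i+1}s_i=s_{i+1}s_is_{i+1}$ holds in $G(e,e,n)$.

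For the commutation relation, when $|i-j|>1$ the row pairs $\{i-1,i\}$ and $\{j-1,j\}$ are disjoint, so $s_is_j=s_js_i$ and $\mathbf{s}_i\mathbf{s}_j$ is reduced. The first step $\ell(s_iw)=\ell(w)-1$ is the hypothesis $s_i\preceq w$. For the second step I observe that left multiplication by $s_i$ leaves rows $j-1$ and $j$ untouched, so the criterion of Proposition \ref{PropLengthdecreas} for $\ell(s_j(s_iw))=\ell(s_iw)-1$ involves exactly the same columns and entries of rows $j-1,j$ as the criterion for $\ell(s_jw)=\ell(w)-1$; the latter holds because $s_j\preceq w$, hence so does the former, and Lemma \ref{LemmaCaractDivlambda} gives $s_is_j\preceq w$.

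The main obstacle I anticipate is purely organizational: the bookkeeping in the braid case, where after each left multiplication the three rows are permuted and one must reapply the correct sub-case of Proposition \ref{PropLengthdecreas} to the new configuration. No idea beyond those already deployed in Lemmas \ref{tis3Divw} and \ref{tisjDivw} is required, which is precisely why the verification can be safely left as a routine case check once the reduction above is set up.
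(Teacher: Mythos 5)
Your proposal is correct and takes exactly the route the paper intends: the paper gives no written proof, stating only that the argument is ``similar to the proofs of Lemmas \ref{tis3Divw} and \ref{tisjDivw} and is left to the reader,'' i.e., Lemma \ref{LemmaCaractDivlambda} combined with a case analysis through Proposition \ref{PropLengthdecreas}, which is precisely your plan. Your details also check out: the two orderings with $c_i$ maximal are genuinely contradictory (1(a) forces $a_i \neq 1$ while 1(b) applied to $s_{i+1}$ forces $a_i = 1$), in each of the four remaining orderings the hypotheses supply exactly the condition needed at each of the three successive multiplications, the commutation case reduces as you say to the invariance of rows $j-1,j$ under $s_i$, and your remark that membership in $[1,\lambda^k]$ is never actually used in this lemma is accurate.
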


The following proposition is a direct consequence of all the preceding lemmas.

\begin{proposition}\label{PropLCMofGenIn1lambdak}

Let $x, y \in X = \{t_0,t_1, \cdots, t_{e-1},s_3, \cdots, s_n \}$. The least common multiple in $([1, \lambda^k], \preceq)$ of $x$ and $y$, denoted by $x \vee y$, exists and is given by the following identities:

\begin{itemize}

\item $t_i \vee t_j = t_kt_0 = t_{i}t_{i-k} = t_{j}t_{j-k}$ for $i \neq j \in \mathbb{Z}/e\mathbb{Z}$,
\item $t_i \vee s_3 = t_is_3t_i = s_3t_is_3$ for $i \in \mathbb{Z}/e\mathbb{Z}$,
\item $t_i \vee s_j = t_is_j = s_jt_i$ for $i \in \mathbb{Z}/e\mathbb{Z}$ and $4 \leq j \leq n$,
\item $s_i \vee s_{i+1} = s_{i}s_{i+1}s_{i} = s_{i+1}s_{i}s_{i+1}$ for $3 \leq i \leq n-1$,
\item $s_i \vee s_j = s_is_j = s_js_i$ for $3 \leq i \neq j \leq n$ and $|i-j| > 1$.

\end{itemize}

\end{proposition}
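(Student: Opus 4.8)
The plan is to show, for each of the five types of pair $\{x,y\}$ drawn from $X$, that the displayed element $m$ is precisely the join $x\vee y$ in the poset $([1,\lambda^k],\preceq)$. Since here $\preceq$ is left-divisibility, $x\vee y$ is the least common multiple of $x$ and $y$, so it suffices to verify two things: first, that $m$ is a common multiple of $x$ and $y$ lying in $[1,\lambda^k]$; and second, that $m$ left-divides every common multiple of $x$ and $y$ in $[1,\lambda^k]$. The second property is exactly the content of Lemmas \ref{tit0Divw}, \ref{tis3Divw}, \ref{tisjDivw}, and \ref{sis(i+1)sisjdivw}: each asserts that any $w\in[1,\lambda^k]$ divisible by the two generators in question is also divisible by the corresponding $m$. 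Thus once the first property is established the second gives minimality, and the proposition follows immediately, which is why it is a direct consequence of the preceding lemmas. Observe moreover that common multiples always exist, since every generator of $X$ lies in $[1,\lambda^k]=D_k$ (a product of the $s_j$ is a permutation matrix whose nonzero entries are all $1$, while for $t_i$ the only entries different from $1$ sit at staircase corners, hence are bullets), so $\lambda^k$ is itself a common multiple of any $x$ and $y$.

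For the first property I would begin by noting that each displayed equality genuinely holds in $G(e,e,n)$. The dual-braid identity $t_kt_0=t_it_{i-k}=t_jt_{j-k}$ is checked by the elementary computation $t_at_b=\mathrm{diag}(\zeta_e^{\,b-a},\zeta_e^{\,a-b},1,\dots,1)$, which also identifies $m=t_i\vee t_j$ with $\mathrm{diag}(\zeta_e^{-k},\zeta_e^{k},1,\dots,1)$; the identities $s_3t_is_3=t_is_3t_i$, $t_is_j=s_jt_i$, $s_is_{i+1}s_i=s_{i+1}s_is_{i+1}$, and $s_is_j=s_js_i$ are the defining relations of the presentation and hence hold in $G(e,e,n)$. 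Membership $m\in[1,\lambda^k]=D_k$ is then read off from the matrix form of $m$ through Proposition \ref{PropZ'=1orZetae} and Definition \ref{DefDK}: the symmetric-group candidates are permutation matrices, and for $t_kt_0$, $t_is_3t_i$, and $t_is_j$ one checks directly that every non-bullet nonzero entry of $Z'(m)$ equals $1$ or $\zeta_e^{k}$, the occurring roots of unity landing on bullet positions.

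It then remains to exhibit $x$ and $y$ as left-divisors of $m$. Each displayed factorization of $m$ can be rewritten, via the relevant relation, so as to begin with $x$ and (separately) with $y$; for instance $m=t_i\cdot t_{i-k}=t_j\cdot t_{j-k}$ in the $t$--$t$ case, and $m=t_i\cdot(s_3t_i)=s_3\cdot(t_is_3)$ for $t_i\vee s_3$. Applying Algorithm \ref{Algo1} and Proposition \ref{PropLengthdecreas} shows that these words are reduced expressions over $\mathbf{X}$, so $\ell(m)$ matches the word length (either $2$ or $3$ according to the case) and the factorizations are length-additive; hence $x\preceq m$ and $y\preceq m$ by Definition \ref{DefPartalOrder1}, equivalently by Lemma \ref{LemmaCaractDivlambda}. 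Combining the two properties, $m$ is the least element among the common multiples of $x$ and $y$, that is $x\vee y=m$. The computations are routine throughout; the one step demanding genuine care is the membership bookkeeping of the first property, namely confirming in the mixed cases $t_is_3t_i$ and $t_is_j$ that the roots of unity occupy bullet positions rather than appearing in $Z'(m)$ with a forbidden value. Once this is verified in all five cases, the lemmas supply minimality and the proof is complete.
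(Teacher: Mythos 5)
Your proposal is correct and takes essentially the same route as the paper: the paper's own proof is the single remark that the proposition ``is a direct consequence of all the preceding lemmas,'' where Lemmas \ref{tit0Divw}, \ref{tis3Divw}, \ref{tisjDivw}, and \ref{sis(i+1)sisjdivw} supply exactly the minimality half of your argument, just as you identify. The upper-bound half---that each candidate $m$ holds as an identity in $G(e,e,n)$, lies in $[1,\lambda^k]=D_k$, and is a length-additive common multiple of $x$ and $y$---is left implicit in the paper, and your explicit verification via Proposition \ref{PropZ'=1orZetae}, Definition \ref{DefDK}, and the reduced-expression length counts is accurate (including the genuinely delicate point that the roots of unity in $t_kt_0$, $t_is_3t_i$, and $t_is_j$ occupy bullet positions or equal $\zeta_e^k$).
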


We have a similar result for $([1,\lambda^k],\preceq_r)$.

\begin{proposition}\label{PropLCMofGenRight}

Let $x, y \in X$. The least common multiple in $([1, \lambda^k], \preceq_r)$ of $x$ and $y$, denoted by $x \vee_r y$, exists and is equal to $x \vee y$.

\end{proposition}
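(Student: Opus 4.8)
The plan is to exploit the matrix transpose as an order-theoretic bridge between the two divisibility posets. Define $\theta\colon G(e,e,n)\to G(e,e,n)$ by $\theta(w)=w^{T}$. Since every $w\in G(e,e,n)$ is monomial with root-of-unity entries, $\theta$ is a group anti-automorphism. Its crucial features are: it permutes the generating set $X$, because $t_i^{T}=t_{-i}=t_{e-i}\in X$ and $s_j^{T}=s_j\in X$; it is length-preserving, since if $w=x_1\cdots x_r$ is a reduced expression over $\mathbf{X}$ then $w^{T}=x_r^{T}\cdots x_1^{T}$ writes $w^{T}$ as a product of $r$ elements of $X$, giving $\ell(w^{T})\le \ell(w)$, and applying the same to $w^{T}$ yields equality; and it fixes $\lambda^{k}$, because $\lambda^{k}$ is diagonal, hence symmetric.

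First I would record the order-theoretic consequence. From $\theta$ being a length-preserving anti-automorphism one gets $a\preceq b\iff a^{T}\preceq_r b^{T}$ for all $a,b$: indeed $b=ac$ with $\ell(b)=\ell(a)+\ell(c)$ transposes to $b^{T}=c^{T}a^{T}$ with $\ell(b^{T})=\ell(c^{T})+\ell(a^{T})$, which is exactly $a^{T}\preceq_r b^{T}$, and conversely. Because $\lambda^{k}$ is balanced (Proposition \ref{PropLambdaKBalanced}), we have $[1,\lambda^{k}]=[1,\lambda^{k}]_r$, so the equivalence $a\preceq\lambda^{k}\iff a^{T}\preceq_r\lambda^{k}$ shows that $\theta$ restricts to a bijection of $[1,\lambda^{k}]$ onto itself. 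Consequently $\theta$ is an \emph{order isomorphism} from $([1,\lambda^{k}],\preceq)$ onto $([1,\lambda^{k}],\preceq_r)$, and order isomorphisms carry existing joins to joins: whenever $x\vee y$ exists (as it does for generators by Proposition \ref{PropLCMofGenIn1lambdak}), the join $\theta(x)\vee_r\theta(y)$ exists and equals $\theta(x\vee y)=(x\vee y)^{T}$.

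Next I would combine this with a direct transpose-compatibility check of the formulas in Proposition \ref{PropLCMofGenIn1lambdak}, namely $(x\vee y)^{T}=x^{T}\vee y^{T}$ for all $x,y\in X$. This is immediate from the explicit expressions: the join $t_i\vee t_j=t_kt_0$ is diagonal, hence symmetric, and the left-join of any two distinct $t$'s equals this same element $t_kt_0$; the join $t_i\vee s_3=t_is_3t_i$ transposes to $t_{-i}s_3t_{-i}=t_{-i}\vee s_3$; the commuting joins $t_i\vee s_j=t_is_j$ and $s_i\vee s_j=s_is_j$ transpose to the corresponding commuting products; and the braids $s_i\vee s_{i+1}=s_is_{i+1}s_i$ are symmetric. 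Writing $u=\theta(x)$ and $v=\theta(y)$, and using that $\theta$ is an involution on $X$, the previous paragraph gives $u\vee_r v=\theta(u^{T}\vee v^{T})=(u^{T}\vee v^{T})^{T}$, and the transpose-compatibility turns the right-hand side into $((u\vee v)^{T})^{T}=u\vee v$. This proves both the existence of $u\vee_r v$ and the identity $u\vee_r v=u\vee v$ for all $u,v\in X$.

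The main obstacle is foundational rather than computational: one must be confident that transpose is genuinely length-preserving and stabilizes the interval, since the whole argument rests on $\theta$ being an order isomorphism between the two posets. Once that is secured, the remaining content---tracking the relabeling $t_i\mapsto t_{-i}$ induced by $\theta$ and verifying $(x\vee y)^{T}=x^{T}\vee y^{T}$---is a routine case check against Proposition \ref{PropLCMofGenIn1lambdak}. An alternative, more pedestrian route would re-derive right-divisibility analogues of Lemmas \ref{tit0Divw}--\ref{sis(i+1)sisjdivw} using the right-multiplication version of Proposition \ref{PropLengthdecreas}; this works but duplicates the earlier case analysis, whereas the transpose argument reuses Proposition \ref{PropLCMofGenIn1lambdak} wholesale.
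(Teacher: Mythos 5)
Your proof is correct and takes essentially the same route as the paper: your transpose map $\theta$ is exactly the paper's antihomomorphism $\phi\colon t_i \longmapsto t_{-i},\ s_j \longmapsto s_j$ realized concretely on matrices, and both arguments rest on this being a length-preserving involutive anti-automorphism together with the generator-level compatibility $\phi(x \vee y) = \phi(x) \vee \phi(y)$ checked against the explicit formulas of Proposition \ref{PropLCMofGenIn1lambdak}. The only difference is presentational: you package the conclusion as an order isomorphism from $([1,\lambda^k],\preceq)$ onto $([1,\lambda^k],\preceq_r)$, using that $\lambda^k$ is balanced and symmetric, whereas the paper argues directly that any common right-multiple $w$ of $x$ and $y$ satisfies $x \vee y \preceq_r w$; your explicit verification that $\theta$ stabilizes the interval is a slightly more careful rendering of the same idea.
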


\begin{proof}

Define a map on the generators of $G(e,e,n)$ by $\phi: t_i \longmapsto t_{-i}$ and $s_j \longmapsto s_j$ with $i \in \mathbb{Z}/e\mathbb{Z}$ and $3 \leq j \leq n$. On examination of the relations for $G(e,e,n)$, it is quickly verified that this map extends to an anti-homomorphism $G(e,e,n) \longrightarrow G(e,e,n)$. Clearly $\phi^2$ is the identity, so in particular, $\phi$ respects the length function on $G(e,e,n)$: $\ell(\phi(w)) = \ell(w)$. Suppose that $x,y \in X$ and $x \preceq_r w$ and $y \preceq_r w$, we will show that $x \vee y \preceq_r w$. Write $w = vx$ and $w=v'y$ with $v, v' \in G(e,e,n)$. Thus, $\phi(w) = \phi(x) \phi(v) = \phi(y) \phi(v')$, and since $\phi$ respects length, $\phi(x) \preceq \phi(w)$ and $\phi(y) \preceq \phi(w)$. Hence $\phi(x) \vee \phi(y) \preceq \phi(w)$.

For each pair of generators, it is straightforward to check that\\ $\phi(x) \vee \phi(y) = \phi(x \vee y)$: the only non-trivial case being when $x = t_i$ and $y = t_j$ for $i \neq j$, when we have: $$\phi(t_i) \vee \phi(t_j) = t_{-i} \vee t_{-j} = t_kt_0 = \phi(t_0t_{-k}) = \phi(t_kt_0).$$ Thus, $\phi(x \vee y) \preceq \phi(w)$, that is $\phi(w) = \phi(x \vee y)u$ for some $u \in G(e,e,n)$. Applying $\phi$ again gives $w = \phi(u)(x \vee y)$, and since $\phi$ respects length, $x \vee y \preceq_r w$.

\end{proof}

Note that Propositions \ref{PropLCMofGenIn1lambdak} and \ref{PropLCMofGenRight} are important to prove that both posets $([1,\lambda^k],\preceq)$ and $([1,\lambda^k],\preceq_r)$ are lattices. Actually, they will make possible an induction proof of Proposition \ref{PropInductionExistLCM} in the next subsection.

\subsection{The lattice property and interval structures}

We start by recalling some general properties about lattices that will be useful in our proof. Let $(S, \preceq)$ be a finite poset. 

\begin{definition}\label{DefMeetSemiLattice}

Say that $(S, \preceq)$ is a meet-semilattice if and only if $f \wedge g := gcd(f,g)$ exists for any $f, g \in S$.

\end{definition} 

Equivalently, $(S, \preceq)$ is a meet-semilattice if and only if $\bigwedge T$ exists for any finite nonempty subset $T$ of $S$.

\begin{definition}\label{DefJoinSemiLattice}

Say that $(S, \preceq)$ is a join-semilattice if and only if $f \vee g := lcm(f,g)$ exists for any $f, g \in S$.

\end{definition}

Equivalently, $(S, \preceq)$ is a join-semilattice if and only if $\bigvee T$ exists for any finite nonempty subset $T$ of $S$.

\begin{proposition}\label{PropCaractMeetSemiLattice}

Let $(S, \preceq)$ be a finite poset. $(S, \preceq)$ is a meet-semilattice if and only if for any $f, g \in S$, either $f \vee g$ exists, or $f$ and $g$ have no common multiples.

\end{proposition}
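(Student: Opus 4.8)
The plan is to prove the two implications separately, in each case passing to the (finite, hence manageable) set of common bounds and extracting an extremal element from it. Throughout I write $U(f,g) = \{u \in S \mid f \preceq u,\ g \preceq u\}$ for the set of common multiples (upper bounds) of $f$ and $g$, and dually $L(f,g) = \{v \in S \mid v \preceq f,\ v \preceq g\}$ for the set of common divisors (lower bounds).

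For the forward implication I would assume $(S,\preceq)$ is a meet-semilattice and fix $f,g$ that \emph{do} have a common multiple, so that $U(f,g)$ is a nonempty finite subset of $S$. By the remark following Definition \ref{DefMeetSemiLattice}, the meet $m := \bigwedge U(f,g)$ exists. The key observation is that $m$ itself lies in $U(f,g)$: since every $u \in U(f,g)$ satisfies $f \preceq u$ and $g \preceq u$, both $f$ and $g$ are lower bounds of $U(f,g)$, and hence are $\preceq$ the greatest lower bound $m$; thus $f \preceq m$ and $g \preceq m$, i.e. $m \in U(f,g)$. Being the meet, $m$ is also below every element of $U(f,g)$, so $m$ is the least common multiple of $f$ and $g$. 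This shows $f \vee g$ exists, which is exactly the required alternative.

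For the backward implication I would assume the stated alternative holds for every pair, fix $f,g$, and produce $f \wedge g$ as a greatest element of $L(f,g)$. The crux is a closure lemma: $L(f,g)$ is closed under the operation $\vee$. Indeed, for $v_1, v_2 \in L(f,g)$ the element $f$ is a common multiple of $v_1$ and $v_2$, so by hypothesis $v_1 \vee v_2$ exists; and since $v_1 \vee v_2$ is the \emph{least} common multiple while $f$ and $g$ are themselves common multiples of $v_1,v_2$, we get $v_1 \vee v_2 \preceq f$ and $v_1 \vee v_2 \preceq g$, so $v_1 \vee v_2 \in L(f,g)$. A finite nonempty subset closed under $\vee$ has a greatest element (iterate the binary join), namely $\bigvee L(f,g)$, and this greatest common divisor is precisely $f \wedge g$.

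The step I expect to require the most care is guaranteeing that $L(f,g)$ is nonempty, so that the backward direction actually yields a meet. This is automatic in the situation of interest, where $S = [1,\lambda^k]$ possesses the least element $1$: then $1 \in L(f,g)$ always, and the closure lemma above does the rest. (Without a least element the statement can fail --- for a two-element antichain the alternative holds vacuously yet no meet exists --- so the presence of the bottom element $1$ is what makes the equivalence usable here.) Apart from this, both directions reduce to the single conceptual point, used in each, that the join of a family, when it exists, lies below every common upper bound of that family; the finiteness of $S$ then supplies the needed extremal elements with no further work.
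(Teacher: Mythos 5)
Your proof is correct and follows essentially the same route as the paper's: in one direction take the meet of the (finite, nonempty) set of common multiples and observe it is itself a common multiple, hence the lcm; in the other, take the join of the set of common divisors. You are in fact more careful than the paper --- your closure lemma for $L(f,g)$ under $\vee$, the iteration of binary joins, and especially your remark that the equivalence fails without a least element (a two-element antichain satisfies the alternative vacuously yet has no meet) make explicit what the paper's proof tacitly assumes, namely $L(f,g) \neq \emptyset$, which is automatic in the intended application $S = [1,\lambda^k]$ since $1 \in S$.
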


\begin{proof}

Let $f, g \in S$ and suppose that $f$ and $g$ have at least one common multiple. Let $A := \{h \in S\ |\ f \preceq h$ and $g \preceq h \}$ be the set of the common multiples of $f$ and $g$. Since $S$ is finite, $A$ is also finite. Since $(S, \preceq)$ is a meet-semilattice, $\bigwedge A$ exists and $\bigwedge A = lcm(f,g) = f \vee g$.\\
Conversely, let $f, g \in S$ and let $B := \{h \in S\ |\ h \preceq f$ and $h \preceq g \}$ be the set of all common divisors of $f$ and $g$. Since all elements of $B$ have common multiples, $\bigvee B$ exists and we have $\bigvee B = gcd(f,g) = f \wedge g$.

\end{proof}

\begin{definition}\label{DefFromSemiLatToLat}

The poset $(S, \preceq)$ is a lattice if and only if it is both a meet- and join- semilattice.

\end{definition}

The following lemma is a consequence of Proposition \ref{PropCaractMeetSemiLattice}.

\begin{lemma}\label{LemmaSemiLatticeLattice}

If $(S, \preceq)$ is a meet-semilattice such that $\bigvee S$ exists, then $(S, \preceq)$ is a lattice.\\

\end{lemma}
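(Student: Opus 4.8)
The plan is to show that $(S,\preceq)$ is a join-semilattice, since it is already a meet-semilattice by hypothesis; the conclusion will then follow at once from Definition \ref{DefFromSemiLatToLat}. To establish the join-semilattice property, I would invoke Proposition \ref{PropCaractMeetSemiLattice}, which asserts that in a meet-semilattice the join $f \vee g$ of any two elements exists \emph{unless} $f$ and $g$ have no common multiple. So it suffices to verify that every pair of elements of $S$ admits at least one common multiple.

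This is exactly where the hypothesis that $\bigvee S$ exists enters. Writing $\Delta := \bigvee S$, by definition $\Delta$ is the least upper bound of the whole poset $S$ for $\preceq$, so in particular $h \preceq \Delta$ for every $h \in S$. Consequently, given any $f, g \in S$, the element $\Delta$ satisfies $f \preceq \Delta$ and $g \preceq \Delta$, i.e. $\Delta$ is simultaneously a multiple of $f$ and of $g$, hence a common multiple. Therefore no pair $f,g$ can fall into the second alternative of Proposition \ref{PropCaractMeetSemiLattice}, and we conclude that $f \vee g$ exists for all $f, g \in S$, so $(S,\preceq)$ is a join-semilattice.

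Having shown that both $f \wedge g$ (from the meet-semilattice hypothesis) and $f \vee g$ (just established) exist for every pair $f,g \in S$, the poset $(S,\preceq)$ is simultaneously a meet- and a join-semilattice, hence a lattice by Definition \ref{DefFromSemiLatToLat}.

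I do not anticipate any genuine obstacle in this argument: the entire content reduces to a single application of Proposition \ref{PropCaractMeetSemiLattice}. The only point requiring a moment's care is recognizing that $\bigvee S$, being the join of the \emph{entire} set, dominates every element of $S$ and therefore serves as one uniform common multiple valid for all pairs at once; once this observation is made, the meet-semilattice characterization supplies the joins and the proof closes immediately.
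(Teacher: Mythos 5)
Your proof is correct and is precisely the argument the paper intends: the lemma is stated there as an immediate consequence of Proposition \ref{PropCaractMeetSemiLattice}, with $\bigvee S$ serving as the universal common multiple that rules out the second alternative, exactly as you observe (and as the paper itself uses in Corollary \ref{CorBothPosetsLattices} with $\bigvee [1,\lambda^k] = \lambda^k$). No gaps; your write-up simply makes explicit what the paper leaves unwritten.
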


We will prove that $([1,\lambda^k],\preceq)$ is a meet-semilattice by applying Proposition \ref{PropCaractMeetSemiLattice}. For $1 \leq m \leq \ell(\lambda^k)$ with $\ell(\lambda^k) = n(n-1)$, we introduce 

\begin{center} $([1,\lambda^k])_m := \{w \in [1,\lambda^k]\ s.t.\ \ell(w) \leq m \}.$ \end{center}

\begin{proposition}\label{PropInductionExistLCM}

Let $0 \leq k \leq e-1$. For $1 \leq m \leq n(n-1)$ and $u,v$ in $([1,\lambda^k])_m$, either $u \vee v$ exists in $([1,\lambda^k])_m$, or $u$ and $v$ do not have common multiples in $([1,\lambda^k])_m$.

\end{proposition}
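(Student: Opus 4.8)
The plan is to argue by induction on $m$, peeling off a common generator and invoking the explicit joins of generators supplied by Propositions \ref{PropLCMofGenIn1lambdak} and \ref{PropLCMofGenRight}. For the base case $m=1$, the set $([1,\lambda^k])_1$ consists of the identity together with the generators dividing $\lambda^k$. If $u=1$ then $u\vee v=v$; if $u=v$ the join is $u$; and if $u\neq v$ are distinct generators, any common multiple has length at least $1$ and, if of length exactly $1$, would have to equal both $u$ and $v$. Hence in that case $u$ and $v$ have no common multiple in $([1,\lambda^k])_1$, which settles $m=1$.

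Assume the statement for $m-1$ and let $u,v\in([1,\lambda^k])_m$ admit a common multiple $w$ in $([1,\lambda^k])_m$; we must produce $u\vee v$. We may assume $u,v\neq 1$ and that neither divides the other, the remaining situations being immediate. Choose generators $x\preceq u$ and $y\preceq v$; since $u,v\preceq w$, both $x$ and $y$ divide $w$. Suppose first that some generator $x$ divides both $u$ and $v$. By Lemma \ref{LemmaDivisorsForInduction}, applied to the balanced element $\lambda^k$, the elements $u':=x^{-1}u$, $v':=x^{-1}v$ and $w':=x^{-1}w$ all lie in $[1,\lambda^k]$, and by the additivity of length in Definition \ref{DefPartalOrder1} they lie in $([1,\lambda^k])_{m-1}$ with $u',v'\preceq w'$. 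By the induction hypothesis $z':=u'\vee v'$ exists in $([1,\lambda^k])_{m-1}$ and satisfies $z'\preceq w'$. I then claim $u\vee v=xz'$: one checks $u=xu'\preceq xz'$ and $v=xv'\preceq xz'$ using that left multiplication by $x$ preserves the relevant length equalities (Proposition \ref{PropLengthdecreas}), that $xz'\preceq xw'=w$ keeps us inside $([1,\lambda^k])_m$, and that any common multiple of $u$ and $v$ is divisible by $x$ and hence, by minimality of $z'$, by $xz'$.

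The remaining, genuinely delicate case is when no single generator divides both $u$ and $v$, so that $x\neq y$. Here the pair $x,y$ still has the common multiple $w$, so by Proposition \ref{PropLCMofGenIn1lambdak} the join $x\vee y$ exists and divides $w$; moreover Proposition \ref{PropLCMofGenRight} guarantees that this join is simultaneously a left and a right least common multiple. The plan is to use these explicit two-generator joins to reconcile the differing initial generators: writing $x\vee y=x\tilde y=y\tilde x$, one transports the problem across this square so as to lower $\ell(u)+\ell(v)$, and then reapplies the peeling step together with the induction hypothesis. This is precisely a cube-condition argument, and it is here that the concrete shape of the generator joins in Proposition \ref{PropLCMofGenIn1lambdak}, together with their left-right symmetry from Proposition \ref{PropLCMofGenRight}, becomes indispensable.

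I expect the main obstacle to be exactly this last case: verifying that the element assembled from the two-generator joins is a \emph{least} common multiple, rather than merely a common multiple, and that it remains inside $([1,\lambda^k])_m$ with the correct length. Establishing leastness reduces to checking the compatibility (cube) condition among the generators, for which the exhaustive list of joins $t_i\vee t_j$, $t_i\vee s_j$ and $s_i\vee s_j$ in Proposition \ref{PropLCMofGenIn1lambdak}, and the coincidence of left and right joins in Proposition \ref{PropLCMofGenRight}, must be used to exclude any failure of coherence. Once the induction closes, applying it with $m=n(n-1)$ supplies the hypothesis of Proposition \ref{PropCaractMeetSemiLattice}, whence $([1,\lambda^k],\preceq)$ is a meet-semilattice.
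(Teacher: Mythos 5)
Your base case and your ``shared generator'' case are fine, but the heart of the proposition is exactly the case you leave as a plan --- when $u=xu_1$ and $v=yv_1$ with $x\neq y$ --- and there the proposal contains no proof, only the announcement of ``a cube-condition argument.'' The paper closes this case concretely, and without any case split, by three nested lcm squares (Figure \ref{fig2}): first $x\vee y$ exists by Proposition \ref{PropLCMofGenIn1lambdak} and divides $w$, written $x\vee y=xy_1=yx_1$; then, since $x_1$ and $v_1$ both divide $y^{-1}w$ and Lemma \ref{LemmaDivisorsForInduction} puts them in $[1,\lambda^k]$ with length $<m$, the induction hypothesis (note: the induction is on the truncation level $m$, not on $\ell(u)+\ell(v)$ as your ``lowering'' step suggests) gives $x_1\vee v_1=v_1x_2=x_1v_2$ in $([1,\lambda^k])_{m-1}$, and symmetrically $y_1\vee u_1=u_1y_2=y_1u_2$; finally a third application gives $u_2\vee v_2=v_2u_3=u_2v_3$, and one assembles $u\vee v=uy_2v_3=vx_2u_3$. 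You never produce this element, so your induction does not close.

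Your diagnosis of the remaining difficulty is also off target in two ways. First, \emph{leastness} requires no separate verification of a coherence or cube condition among generators: since each of the three joins above is an lcm produced by the induction hypothesis, and the arbitrary common multiple $w$ supports every stage of the diagram ($x\vee y\preceq w$, then $x_1\vee v_1\preceq y^{-1}w$, etc.), the assembled element automatically divides $w$; this is the content of the dashed arrows in Figure \ref{fig2}. Second, Proposition \ref{PropLCMofGenRight} plays no role in this proof --- it is used only for the mirror statement about $([1,\lambda^k],\preceq_r)$ --- so the left--right symmetry you call ``indispensable'' is a misdirection. The gap is fillable by standard means, but as written the proposal proves only the easy reductions and defers the entire inductive engine of the argument.
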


\begin{proof}

Let $1 \leq m \leq n(n-1)$. We make a proof by induction on $m$. By Proposition \ref{PropLCMofGenIn1lambdak}, our claim holds for $m=1$. Suppose $m > 1$. Assume that the claim holds for all $1 \leq m' \leq m-1$. We want to prove it for $m' = m$. The proof is illustrated in Figure \ref{fig2} below. Let $u, v$ be in $([1,\lambda^k])_m$ such that $u$ and $v$ have at least one common multiple in $([1,\lambda^k])_m$ which we denote by $w$. Write $u= xu_1$ and $v = y v_1$ such that $x, y \in X$ and $\ell(u) = \ell(u_1) + 1$, $\ell(v) = \ell(v_1) + 1$. By Proposition \ref{PropLCMofGenIn1lambdak}, $x \vee y$ exists. Since $x \preceq w$ and $y \preceq w$, $x \vee y$ divides $w$. We can write $x \vee y = xy_1 = yx_1$ with $\ell(x \vee y) = \ell(x_1) + 1 = \ell(y_1) + 1$. By Lemma \ref{LemmaDivisorsForInduction}, we have $x_1, v_1 \in [1, \lambda^k]$. Also, we have $\ell(x_1) < m$, $\ell(v_1) < m$ and $x_1, v_1$ have a common multiple in $([1,\lambda^k])_{m-1}$. Thus, by the induction hypothesis, $x_1 \vee v_1$ exists in $([1,\lambda^k])_{m-1}$. Similarly, $y_1 \vee u_1$ exists in $([1,\lambda^k])_{m-1}$. Write $x_1 \vee v_1 = v_1x_2 = x_1v_2$ with $\ell(x_1 \vee v_1) = \ell(v_1) + \ell(x_2) = \ell(v_2) + \ell(x_1)$ and write $y_1 \vee u_1 = u_1y_2 = y_1u_2$ with $\ell(y_1 \vee u_1) = \ell(y_1) + \ell(u_2) = \ell(u_1) + \ell(y_2)$. By Lemma \ref{LemmaDivisorsForInduction}, we have $u_2, v_2 \in [1, \lambda^k]$. Also, we have $\ell(u_2) < m$, $\ell(v_2) < m$ and $u_2, v_2$ have a common multiple in $([1,\lambda^k])_{m-1}$. Thus, by the induction hypothesis, $u_2 \vee v_2$ exists in $([1,\lambda^k])_{m-1}$. Write $u_2 \vee v_2 = v_2 u_3 = u_2 v_3$ with $\ell(u_2 \vee v_2) = \ell(v_2) + \ell(u_3) = \ell(u_2) + \ell(v_3)$. Since $uy_2v_3 = vx_2u_3$ is a common multiple of $u$ and $v$ that divides every common multiple $w$ of $u$ and $v$, we deduce that $u \vee v = uy_2v_3 = vx_2u_3$ and we are done.

\end{proof}

\begin{figure}[H]

\begin{small}
\begin{center}

\begin{tikzpicture}[scale=0.7]

\draw [->] (0,0) to node[auto] {$y_2$} (1,0);
\draw [->] (1,0) to node[auto] {$v_3$} (4,0);
\draw [->] (0,2) to node[left] {$u_1$} (0,0);
\draw [->] (1,2) to node[left] {$u_2$} (1,0);
\draw [->] (4,2) to node[left] {$u_3$} (4,0);
\draw [->] (0,2) to node[auto] {$y_1$} (1,2);
\draw [->] (1,2) to node[auto] {$v_2$} (4,2);
\draw [->] (0,3) to node[left] {$x$} (0,2);
\draw [->] (1,3) to node[auto] {$x_1$} (1,2);
\draw [->] (4,3) to node[left] {$x_2$} (4,2);
\draw [->] (0,3) to node[auto] {$y$} (1,3);
\draw [->] (1,3) to node[auto] {$v_1$} (4,3);
\draw [-] (4,0) -- (6,-2) node[right] {$w$};
\draw [->, dashed] (0,0) to[bend right] (6,-2);
\draw [-, dashed] (1,0) to[bend right] (6,-2);
\draw [-, dashed] (1,2) to[bend right] (6,-2);
\draw [-, dashed] (4,2) to[bend left] (6,-2);
\draw [->, dashed] (4,3) to[bend left] (6,-2);

\end{tikzpicture}

\end{center}
\end{small}

\caption{The proof of Proposition \ref{PropInductionExistLCM}.}
\label{fig2}
\end{figure}
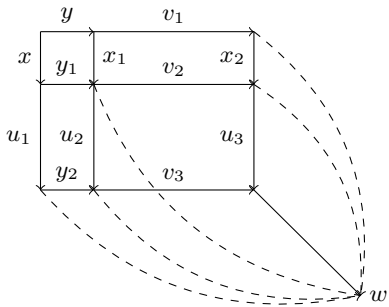

\noindent Similarly, applying Proposition \ref{PropLCMofGenRight}, we obtain the same results for $([1,\lambda^k],\preceq_r)$. We thus proved the following.

\begin{cor}\label{CorBothPosetsLattices}

Both posets $([1,\lambda^k],\preceq)$ and $([1,\lambda^k],\preceq_r)$ are lattices. 

\end{cor}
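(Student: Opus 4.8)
The plan is to read off the corollary from the machinery assembled just above, treating it as a pure assembly of Proposition \ref{PropInductionExistLCM}, Proposition \ref{PropCaractMeetSemiLattice} and Lemma \ref{LemmaSemiLatticeLattice}. Write $S := [1,\lambda^k]$, which is a \emph{finite} poset since $G(e,e,n)$ is finite. By Definition \ref{DefFromSemiLatToLat} it suffices to show that $(S,\preceq)$ is simultaneously a meet-semilattice and a join-semilattice; I will obtain the latter for free from the existence of a top element, so the real content is the meet-semilattice property, which is precisely what Proposition \ref{PropInductionExistLCM} was built to deliver.

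For the left order, first I would specialize Proposition \ref{PropInductionExistLCM} to the maximal value $m = n(n-1) = \ell(\lambda^k)$. Since every element of $[1,\lambda^k]$ has length at most $n(n-1)$, we have $([1,\lambda^k])_{n(n-1)} = S$, so the proposition yields exactly: for all $u,v \in S$, either $u \vee v$ exists or $u$ and $v$ have no common multiple in $S$. This is verbatim the hypothesis of Proposition \ref{PropCaractMeetSemiLattice}, so $(S,\preceq)$ is a meet-semilattice. Next I would observe that $\lambda^k$ is the maximum of $(S,\preceq)$: by definition of the interval every $w \in S$ satisfies $w \preceq \lambda^k$, and $\lambda^k \in S$, so $\bigvee S = \lambda^k$ exists. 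Lemma \ref{LemmaSemiLatticeLattice} then concludes that $(S,\preceq)$ is a lattice.

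For the right order $\preceq_r$ I would run the identical two-step argument. The only ingredient that must be replaced is the base case $m=1$ of the induction in Proposition \ref{PropInductionExistLCM}: there it was supplied by Proposition \ref{PropLCMofGenIn1lambdak}, and here the corresponding statement is Proposition \ref{PropLCMofGenRight}, which gives $x \vee_r y = x \vee y$ for all generators $x,y \in X$. With this substitution the induction goes through word for word, producing the same dichotomy for $\preceq_r$; since $\lambda^k$ is balanced by Proposition \ref{PropLambdaKBalanced} we have $[1,\lambda^k]_r = [1,\lambda^k] = S$ with $\lambda^k$ again its top element, so $\bigvee_r S$ exists, and Proposition \ref{PropCaractMeetSemiLattice} together with Lemma \ref{LemmaSemiLatticeLattice} give that $(S,\preceq_r)$ is a lattice as well.

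I do not expect a genuine obstacle here, as the deep work lives in Proposition \ref{PropInductionExistLCM}; the one point deserving care is the claim that the inductive argument transfers verbatim to $\preceq_r$. Rather than re-running the whole induction, a cleaner alternative I would consider is to transport the lattice structure across the length-preserving anti-homomorphism $\phi$ introduced in the proof of Proposition \ref{PropLCMofGenRight}: since $\phi$ reverses products and fixes $\ell$, it carries $\preceq$-divisibility to $\preceq_r$-divisibility and hence an order isomorphism between $([1,\lambda^k],\preceq)$ and $([1,\lambda^k],\preceq_r)$, so the lattice property for the right order follows immediately from that for the left. Either route settles the statement.
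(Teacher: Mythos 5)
Your proposal is correct and follows essentially the same route as the paper: Proposition \ref{PropInductionExistLCM} at $m=n(n-1)$ plus Proposition \ref{PropCaractMeetSemiLattice} give the meet-semilattice property, the top element $\lambda^k$ plus Lemma \ref{LemmaSemiLatticeLattice} upgrade it to a lattice, and the case of $\preceq_r$ is handled by rerunning the induction with Proposition \ref{PropLCMofGenRight} as base case, exactly as the paper does. Your alternative of transporting the lattice structure through the anti-homomorphism $\phi$ is a nice shortcut (it works because $\phi$ is the matrix transpose, so $\phi(\lambda^k)=\lambda^k$ and $\phi$ preserves the interval --- a point worth stating explicitly), but it is not needed for the main argument.
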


\begin{proof}

Applying Proposition \ref{PropCaractMeetSemiLattice}, $([1,\lambda^k],\preceq)$ is a meet-semilattice. Also, by definition of the interval $[1,\lambda^k]$, we have $\bigvee [1,\lambda^k] = \lambda^k$. Thus, applying Proposition \ref{PropCaractMeetSemiLattice}, $([1,\lambda^k],\preceq)$ is a lattice. The same can be done for $([1,\lambda^k],\preceq_r)$.

\end{proof}

We are ready to define the interval Garside monoid $M([1,\lambda^k])$.

\begin{definition}\label{DefIntervalMonoid}

Let $\underline{D_k}$ be a set in bijection with $D_k = [1,\lambda^k]$ with $$[1,\lambda^k] \longrightarrow \underline{D_k}: w \longmapsto \underline{w}.$$ We define the monoid $M([1,\lambda^k])$ by the following presentation of monoid with

\begin{itemize}

\item generating set: $\underline{D_k}$ (a copy of the interval $[1,\lambda^k]$) and
\item relations: $\underline{w} = \underline{w'} \hspace{0.1cm} \underline{w''}$ whenever $w,w'$ and $w'' \in [1,\lambda^k]$, $w=w'w''$ and $\ell(w) = \ell(w') + \ell(w'')$.

\end{itemize}

\end{definition}

We have that $\lambda^k$ is balanced. Also, by Corollary \ref{CorBothPosetsLattices}, both posets $([1,\lambda^k],\preceq)$ and $([1,\lambda^k],\preceq_r)$ are lattices. Hence, by Theorem \ref{TheoremMichelGarside}, we have:

\begin{theorem}\label{TheoremIntervalStructure}

$(M([1,\lambda^k]),\underline{\lambda^k})$ is an interval Garside monoid with simples $\underline{D_k}$, where $\underline{D_k}$ is given in Definition \ref{DefIntervalMonoid}. Its group of fractions exists and is denoted by $G(M([1,\lambda^k]))$.

\end{theorem}

Note that these interval structures have been implemented by Michel and the author by using the package CHEVIE of GAP3 (see \cite{CHEVIEJMichel} and \cite{GAP3CPMichelNeaime}). The next two sections are devoted to the study of these interval structures.

\section{Isomorphism with $B(e,e,n)$}\label{SectionIsomB(een)}

We provide a new presentation for the interval Garside monoid $M([1,\lambda^k])$. Furthermore, we prove that when $k$ and $e$ are coprime, meaning that $k\wedge e=1$, the interval Garside group $G(M([1,\lambda^k]))$ is isomorphic to the complex braid group $B(e,e,n)$. Among the results we prove in this section is an important property that is similar to Matsumoto's property in the case of real reflection groups, see Proposition \ref{PropMatsumoto}.

\subsection{Presentations}

Our first aim is to prove that the interval Garside monoid $M([1,\lambda^k])$ is isomorphic to the monoid $B^{\oplus k}(e,e,n)$ defined as follows.

\begin{definition}\label{DefofB+keen}

For $1 \leq k \leq e-1$, we define the monoid $B^{\oplus k}(e,e,n)$ by a monoid presentation with

\begin{itemize}

\item generating set: $\widetilde{X} = \{ \tilde{t}_{0}, \tilde{t}_{1}, \cdots, \tilde{t}_{e-1}, \tilde{s}_{3}, \cdots, \tilde{s}_{n}\}$ and
\item relations: $\left\{
\begin{array}{ll}

\tilde{s}_{i}\tilde{s}_{j}\tilde{s}_{i} = \tilde{s}_{j}\tilde{s}_{i}\tilde{s}_{j} & for\ |i-j|=1,\\
\tilde{s}_{i}\tilde{s}_{j} = \tilde{s}_{j}\tilde{s}_{i} & for\ |i-j| > 1,\\
\tilde{s}_{3}\tilde{t}_{i}\tilde{s}_{3} = \tilde{t}_{i}\tilde{s}_{3}\tilde{t}_{i} & for\ i \in \mathbb{Z}/e\mathbb{Z},\\
\tilde{s}_{j}\tilde{t}_{i} = \tilde{t}_{i}\tilde{s}_{j} & for\ i \in \mathbb{Z}/e\mathbb{Z}\ and\ 4 \leq j \leq n,\\
\tilde{t}_{i}\tilde{t}_{i-k} = \tilde{t}_{j}\tilde{t}_{j-k} & for\ i, j \in \mathbb{Z}/e\mathbb{Z}.

\end{array}
\right.$

\end{itemize}

\end{definition}

Note that the monoid $B^{\oplus 1}(e,e,n)$ is the monoid defined by the presentation of Corran-Picantin considered as a monoid presentation, see Proposition \ref{PropositionPresCPB(een)}. In \cite{CorranPicantin}, this monoid is denoted by $B^{\oplus}(e,e,n)$.\\

Fix $k$ such that $1 \leq k \leq e-1$. We define a diagram for the presentation of $B^{\oplus k}(e,e,n)$ in the same way as the diagram corresponding to the presentation of Corran-Picantin of $B(e,e,n)$ given in \mbox{Figure \ref{PresofCPBeen}}, with a dashed edge between $\tilde{t}_i$ and $\tilde{t}_{i-k}$ and between $\tilde{t}_j$ and $\tilde{t}_{j-k}$ for each relation of the form $\tilde{t}_{i}\tilde{t}_{i-k} = \tilde{t}_{j}\tilde{t}_{j-k}$, $i, j \in \mathbb{Z}/e\mathbb{Z}$. For example, the diagram corresponding to $B^{(2)}(8,8,2)$ is as follows.

\begin{small}
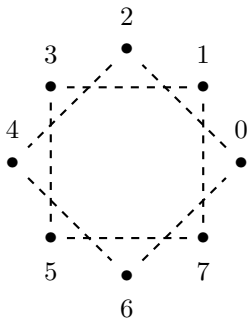
\begin{figure}[H]
\begin{center}
\begin{tikzpicture}

\node[label=$0$] (0) at (1.5,0) {$\bullet$};
\node[label=$1$] (1) at (1,1) {$\bullet$};
\node[label=$2$] (2) at (0,1.5) {$\bullet$};
\node[label=$3$] (3) at (-1,1) {$\bullet$};
\node[label=$4$] (4) at (-1.5,0) {$\bullet$};
\node[label=below:$5$] (5) at (-1,-1) {$\bullet$};
\node[label=below:$6$] (6) at (0,-1.5) {$\bullet$};
\node[label=below:$7$] (7) at (1,-1) {$\bullet$};

\draw[thick,dashed] (0) to (2);
\draw[thick,dashed] (2) to (4);
\draw[thick,dashed] (4) to (6);
\draw[thick,dashed] (6) to (0);

\draw[thick,dashed] (1) to (3);
\draw[thick,dashed] (3) to (5);
\draw[thick,dashed] (5) to (7);
\draw[thick,dashed] (7) to (1);

\end{tikzpicture}
\end{center}
\caption{Diagram for the presentation of $B^{(2)}(8,8,2)$.}\label{PresofB2882}
\end{figure}
\end{small}

The following result is similar to Matsumoto's property in the case of real reflection groups, see \cite{MatsumotoTheorem}.

\begin{proposition}\label{PropMatsumoto}

There exists a map $F: [1,\lambda^k] \longrightarrow B^{\oplus k}(e,e,n)$ defined by\\ $F(w) = \tilde{x}_1 \tilde{x}_2 \cdots \tilde{x}_r$ whenever $\mathbf{x}_1 \mathbf{x}_2 \cdots \mathbf{x}_r$ is a reduced expression over $\mathbf{X}$ of $w$, where $\tilde{x}_i \in \widetilde{X}$ for $1 \leq i \leq r$.

\end{proposition}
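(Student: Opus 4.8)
The plan is to show that the proposed map $F$ is well-defined, which is the entire content of this "Matsumoto-type" statement: the value $\tilde{x}_1\tilde{x}_2\cdots\tilde{x}_r$ in $B^{\oplus k}(e,e,n)$ must not depend on the choice of reduced expression $\mathbf{x}_1\mathbf{x}_2\cdots\mathbf{x}_r$ of $w$ over $\mathbf{X}$. The classical Matsumoto theorem says that any two reduced words for the same Coxeter element are connected by braid relations alone (no quadratic relations), so the analogous statement here is that any two reduced expressions over $\mathbf{X}$ of a given $w \in [1,\lambda^k]$ are related by a finite sequence of applications of the defining relations of $B^{\oplus k}(e,e,n)$ (read as relations among the $\mathbf{x}_i$), none of which changes length. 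I would therefore reformulate the goal as: \emph{if $\mathbf{x}_1\cdots\mathbf{x}_r$ and $\mathbf{y}_1\cdots\mathbf{y}_r$ are two reduced expressions over $\mathbf{X}$ of the same $w$, then one can pass from one to the other using only the braid-type relations listed in Definition \ref{DefofB+keen}.}

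First I would set up an induction on $r = \ell(w)$, the base case $r \le 1$ being trivial. For the inductive step, take two reduced expressions $\mathbf{x}_1\cdots\mathbf{x}_r$ and $\mathbf{y}_1\cdots\mathbf{y}_r$ of $w$; write $x = \overline{\mathbf{x}_1}$ and $y = \overline{\mathbf{y}_1}$, so both $x$ and $y$ are generators in $X$ dividing $w$ on the left, i.e. $x \preceq w$ and $y \preceq w$. If $x = y$, then $x^{-1}w$ has two reduced expressions $\mathbf{x}_2\cdots\mathbf{x}_r$ and a word obtained after rewriting $\mathbf{y}_2\cdots\mathbf{y}_r$, and I apply the induction hypothesis directly after cancelling the common first letter. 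The essential case is $x \neq y$. Here I invoke the lattice structure and the explicit least common multiples computed in Proposition \ref{PropLCMofGenIn1lambdak}: since $x \preceq w$ and $y \preceq w$, the join $x \vee y$ exists in $[1,\lambda^k]$ and divides $w$. Crucially, each identity in Proposition \ref{PropLCMofGenIn1lambdak} expresses $x \vee y$ by \emph{two} distinct reduced words which are exactly the two sides of one of the defining relations of $B^{\oplus k}(e,e,n)$ (for instance $t_i s_3 t_i = s_3 t_i s_3$, or $t_i t_{i-k} = t_j t_{j-k}$, or the symmetric-group braid relations).

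The mechanism of the step is then the standard "diamond" argument. Write $w = (x \vee y)\, z$ with $\ell(w) = \ell(x \vee y) + \ell(z)$, and let $\mathbf{z}$ be any reduced expression of $z$. Starting from $\mathbf{x}_1\cdots\mathbf{x}_r$, the tail $\mathbf{x}_2\cdots\mathbf{x}_r$ is a reduced expression of $x^{-1}w$, which admits a reduced expression beginning with the copy of $(x^{-1}(x\vee y))$ followed by $\mathbf{z}$; by the induction hypothesis (applied to $x^{-1}w$, of length $r-1$) I may rewrite $\mathbf{x}_2\cdots\mathbf{x}_r$ into $\underline{(x^{-1}(x\vee y))}\,\mathbf{z}$ using only braid relations, giving a braid-equivalence from $\mathbf{x}_1\cdots\mathbf{x}_r$ to a reduced expression of $x \vee y$ (the $x$-first one) followed by $\mathbf{z}$. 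Symmetrically $\mathbf{y}_1\cdots\mathbf{y}_r$ is braid-equivalent to the $y$-first reduced expression of $x\vee y$ followed by $\mathbf{z}$. Finally, the two reduced expressions of $x\vee y$ differ by exactly one defining relation of Proposition \ref{PropLCMofGenIn1lambdak}, which is a relation of $B^{\oplus k}(e,e,n)$; composing these three braid-equivalences connects $\mathbf{x}_1\cdots\mathbf{x}_r$ to $\mathbf{y}_1\cdots\mathbf{y}_r$, completing the induction and proving $F$ well-defined.

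The main obstacle I anticipate is the bookkeeping needed to guarantee that, after dividing by $x$ on the left, there genuinely exists a reduced expression of $x^{-1}w$ that starts with $x^{-1}(x\vee y)$ and is compatible with applying the induction hypothesis — in other words, verifying that $x \vee y \preceq w$ with matching lengths \emph{and} that the quotient $z$ can be chosen coherently on both the $x$-side and the $y$-side. This is precisely where Lemma \ref{LemmaDivisorsForInduction} (divisibility passes to $(w')^{-1}w$ inside the interval) and the balancedness of $\lambda^k$ are used to keep every intermediate element inside $[1,\lambda^k]$, so that the induction hypothesis remains applicable. Once the containment in the interval is secured at each stage, the rewriting itself is purely formal and driven entirely by the relation list of Definition \ref{DefofB+keen}.
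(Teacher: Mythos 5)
Your proposal is correct and follows essentially the same route as the paper's proof: an induction on $\ell(w)$ in which the case of distinct first letters $x \neq y$ is resolved by the diamond argument through the lcm $x \vee y$ of Proposition \ref{PropLCMofGenIn1lambdak}, whose two reduced expressions differ by exactly one defining relation of $B^{\oplus k}(e,e,n)$. Your explicit appeal to Lemma \ref{LemmaDivisorsForInduction} to keep the quotients inside $[1,\lambda^k]$ is a point the paper leaves implicit, but otherwise the two arguments coincide.
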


\begin{proof}

Let $\mathbf{w}_1$ and $\mathbf{w}_2$ be in $\mathbf{X}^{*}$. We write $\mathbf{w}_1 \overset{\mathcal{B}}{\leadsto} \mathbf{w}_2$ if $\mathbf{w}_2$ is obtained from $\mathbf{w}_1$ by applying only the relations of the presentation of $B^{\oplus k}(e,e,n)$ where we replace $\tilde{t}_i$ by $\mathbf{t}_i$ and $\tilde{s}_j$ by $\mathbf{s}_j$ for all $i \in \mathbb{Z}/e\mathbb{Z}$ and $3 \leq j \leq n$.\\
Let $w$ be in $[1,\lambda^k]$ and suppose that $\mathbf{w}_1$ and $\mathbf{w}_2$ are two reduced expressions over $\mathbf{X}$ of $w$. We prove that $\mathbf{w}_1 \overset{\mathcal{B}}{\leadsto} \mathbf{w}_2$ by induction on $\boldsymbol{\ell}(\mathbf{w}_1)$.\\
The result holds vacuously for $\boldsymbol{\ell}(\mathbf{w}_1) = 0$ and $\boldsymbol{\ell}(\mathbf{w}_1) = 1$. Suppose that $\boldsymbol{\ell}(\mathbf{w}_1) > 1$. Write 
\begin{center} $\mathbf{w}_1 = \mathbf{x}_1\mathbf{w'}_1$ and $\mathbf{w}_2 = \mathbf{x}_2\mathbf{w'}_2$, with $\mathbf{x}_1, \mathbf{x}_2 \in \mathbf{X}$.\end{center}
\underline{If $\mathbf{x}_1 = \mathbf{x}_2$}, we have $x_1 w'_{1} = x_2 w'_{2}$ in $G(e,e,n)$ from which we get $w'_{1} = w'_{2}$. Then, by the induction hypothesis, we have $\mathbf{w'}_1 \overset{\mathcal{B}}{\leadsto} \mathbf{w'}_2$. Hence $\mathbf{w}_1 \overset{\mathcal{B}}{\leadsto} \mathbf{w}_2$.\\
\underline{If $\mathbf{x}_1 \neq \mathbf{x}_2$}, since $x_1 \preceq w$ and $x_2 \preceq w$, we have $x_1 \vee x_2 \preceq w$ where $x_1 \vee x_2$ is the lcm of $x_1$ and $x_2$ in $([1,\lambda^k],\preceq)$ given in Proposition \ref{PropLCMofGenIn1lambdak}. Write $w = (x_1 \vee x_2) w'$. Also, write $\mathbf{x}_1 \vee \mathbf{x}_2 = \mathbf{x}_1 \mathbf{v}_1$ and $\mathbf{x}_1 \vee \mathbf{x}_2 = \mathbf{x}_2 \mathbf{v}_2$ where we can check that $\mathbf{x}_1 \mathbf{v}_1 \overset{\mathcal{B}}{\leadsto} \mathbf{x}_2 \mathbf{v}_2$ for all possible cases for $\mathbf{x}_1$ and $\mathbf{x}_2$. All the words $\mathbf{x}_1\mathbf{w'}_1$, $\mathbf{x}_2\mathbf{w'}_2$, $\mathbf{x}_1\mathbf{v}_1\mathbf{w'}$, and $\mathbf{x}_2\mathbf{v}_2\mathbf{w'}$ represent $w$. In particular, $\mathbf{x}_1\mathbf{w'}_1$ and $\mathbf{x}_1\mathbf{v}_1\mathbf{w'}$ represent $w$. Hence $w'_1 = v_1w'$ and by the induction hypothesis, we have $\mathbf{w'}_1 \overset{\mathcal{B}}{\leadsto} \mathbf{v}_1\mathbf{w'}$. Thus, we have
\begin{center} $\mathbf{x}_1\mathbf{w'}_1 \overset{\mathcal{B}}{\leadsto} \mathbf{x}_1\mathbf{v}_1\mathbf{w'}$. \end{center}
Similarly, since $\mathbf{x}_2\mathbf{w'}_2$ and $\mathbf{x}_2\mathbf{v}_2\mathbf{w'}$ represent $w$, we get
\begin{center} $\mathbf{x}_2\mathbf{v}_2\mathbf{w'} \overset{\mathcal{B}}{\leadsto} \mathbf{x}_2\mathbf{w'}_2$. \end{center}
Since $\mathbf{x}_1 \mathbf{v}_1 \overset{\mathcal{B}}{\leadsto} \mathbf{x}_2 \mathbf{v}_2$, we have
\begin{center} $\mathbf{x}_1\mathbf{v}_1\mathbf{w'} \overset{\mathcal{B}}{\leadsto} \mathbf{x}_2\mathbf{v}_2\mathbf{w'}$. \end{center} We obtain:
\begin{center}$\mathbf{w}_1 \overset{\mathcal{B}}{\leadsto} \mathbf{x}_1\mathbf{w'}_1 \overset{\mathcal{B}}{\leadsto}  \mathbf{x}_1\mathbf{v}_1\mathbf{w'} \overset{\mathcal{B}}{\leadsto} \mathbf{x}_2\mathbf{v}_2\mathbf{w'} \overset{\mathcal{B}}{\leadsto} \mathbf{x}_2\mathbf{w'}_2 \overset{\mathcal{B}}{\leadsto} \mathbf{w}_2$.\end{center}
Hence $\mathbf{w}_1 \overset{\mathcal{B}}{\leadsto} \mathbf{w}_2$ and we are done.

\end{proof}

\begin{remark}

In the case of the Coxeter group $G(2,2,n)$ of type $D_n$, we have $[1,\lambda] = G(2,2,n)$, where $\lambda$ is the unique balanced element of maximal length of $G(2,2,n)$, see Remarks \ref{RemUniqueMaximalG(2,2,n)} and \ref{RemDivisorsLambdaG(2,2,n)}. Hence the previous result is valid for $G(2,2,n)$. This is Matsumoto's property for the case of $G(2,2,n)$, see \cite{MatsumotoTheorem}.

\end{remark}

By the following proposition, we provide an alternative presentation for the interval Garside monoid $M([1,\lambda^k])$ given in Definition \ref{DefIntervalMonoid}.

\begin{proposition}

The monoid $B^{\oplus k}(e,e,n)$ is isomorphic to $M([1,\lambda^k])$.

\end{proposition}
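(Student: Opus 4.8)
The plan is to exhibit two mutually inverse monoid homomorphisms between $M([1,\lambda^k])$ and $B^{\oplus k}(e,e,n)$. The map from the interval monoid to $B^{\oplus k}(e,e,n)$ will be built from the Matsumoto-type map $F$ of Proposition \ref{PropMatsumoto}, while the reverse map will send each generator of $B^{\oplus k}(e,e,n)$ to the corresponding simple of $M([1,\lambda^k])$.

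First I would define $\Phi : M([1,\lambda^k]) \longrightarrow B^{\oplus k}(e,e,n)$ on generators by $\underline{w} \longmapsto F(w)$, with $F$ the map of Proposition \ref{PropMatsumoto}. To see that $\Phi$ respects the defining relations of $M([1,\lambda^k])$ (Definition \ref{DefIntervalMonoid}), I take a relation $\underline{w} = \underline{w'}\,\underline{w''}$ with $w = w'w''$ and $\ell(w) = \ell(w') + \ell(w'')$. Concatenating a reduced expression of $w'$ with one of $w''$ yields a word of length $\ell(w)$ representing $w$, hence a reduced expression of $w$; since $F$ is independent of the chosen reduced expression, $F(w) = F(w')F(w'')$, so $\Phi$ is a well-defined homomorphism.

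Next I would define $\Psi : B^{\oplus k}(e,e,n) \longrightarrow M([1,\lambda^k])$ on generators by $\tilde{x} \longmapsto \underline{x}$ for each $x \in X$ (each such generator lies in $[1,\lambda^k]$). The verification that $\Psi$ respects the defining relations of Definition \ref{DefofB+keen} is the main bookkeeping step: each relation has the form (word in $\widetilde{X}$) $=$ (word in $\widetilde{X}$), and for each I would observe, using Proposition \ref{PropLCMofGenIn1lambdak}, that both sides are reduced expressions over $X$ of the common least common multiple $x \vee y$ of the two generators involved. For instance, $\tilde{s}_i \tilde{s}_{i+1} \tilde{s}_i$ and $\tilde{s}_{i+1} \tilde{s}_i \tilde{s}_{i+1}$ both map to reduced factorizations of $s_i \vee s_{i+1}$, so both equal $\underline{s_i \vee s_{i+1}}$ in $M([1,\lambda^k])$ by the defining relations; the dual dihedral relation $\tilde{t}_i \tilde{t}_{i-k} = \tilde{t}_j \tilde{t}_{j-k}$ maps to $\underline{t_i \vee t_j}$ on both sides, and similarly for the remaining families. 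Thus $\Psi$ is a well-defined homomorphism.

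Finally I would check that $\Phi$ and $\Psi$ are mutually inverse. On a generator $\underline{w}$ of $M([1,\lambda^k])$, if $\mathbf{x}_1 \cdots \mathbf{x}_r$ is a reduced expression of $w$ then $\Psi(\Phi(\underline{w})) = \underline{x_1} \cdots \underline{x_r}$, which equals $\underline{w}$ by the defining relations since the $x_i$ give a reduced factorization of $w$; hence $\Psi \circ \Phi$ is the identity. On a generator $\tilde{x}$ of $B^{\oplus k}(e,e,n)$, we have $\Phi(\Psi(\tilde{x})) = F(x) = \tilde{x}$ because $x$ has length one over $X$; hence $\Phi \circ \Psi$ is the identity. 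The two maps are therefore inverse isomorphisms. The only genuinely delicate ingredient is the well-definedness of $\Phi$, which rests entirely on the Matsumoto-type Proposition \ref{PropMatsumoto}; the rest is a finite case check against Proposition \ref{PropLCMofGenIn1lambdak}.
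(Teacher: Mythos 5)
Your proposal is correct and takes essentially the same route as the paper: the paper likewise defines $\underline{w} \longmapsto F(w)$ using Proposition \ref{PropMatsumoto} (checking the relations of Definition \ref{DefIntervalMonoid} via concatenation of reduced expressions) and the reverse map $\tilde{x} \longmapsto \underline{x}$, verifying the defining relations of $B^{\oplus k}(e,e,n)$ by observing that both sides of each relation are reduced words over $\mathbf{X}$ representing the same element of $[1,\lambda^k]$ --- exactly the lcm identities of Proposition \ref{PropLCMofGenIn1lambdak} you invoke --- before concluding that the two homomorphisms are mutually inverse on generators. Your writeup is in fact slightly more explicit than the paper's on the inverse check and on why the relation words are reduced, but the structure and key ingredients are identical.
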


\begin{proof}

Consider the map $\rho: \underline{D_k} \longrightarrow B^{\oplus k}(e,e,n): \underline{w} \longmapsto F(w)$ where $F$ is defined in Proposition \ref{PropMatsumoto}.
Let $\underline{w} = \underline{w}' \underline{w}''$ be a defining relation of $M([1,\lambda^k])$. Since $\ell(w) = \ell(w') + \ell(w'')$, a reduced expression for $w'w''$ is obtained by concatenating reduced expressions for $w'$ and $w''$. It follows that $F(w'w'') = F(w') F(w'')$. We conclude that $\rho$ has a unique extension to a monoid homomorphism $M([1,\lambda^k]) \longrightarrow B^{\oplus k}(e,e,n)$, which we denote by the same symbol.

Conversely, consider the map $\rho' : \widetilde{X} \longrightarrow M([1,\lambda^k]): \widetilde{x} \longmapsto \underline{x}$. In order to prove that $\rho'$ extends to a unique monoid homomorphism $B^{\oplus k}(e,e,n) \longrightarrow M([1,\lambda^k])$, we have to check that $\underline{w_1} = \underline{w_2}$ in $M([1,\lambda^k])$ for any defining relation $\widetilde{w}_1 = \widetilde{w}_2$ of $B^{\oplus k}(e,e,n)$. Given a relation $\widetilde{w}_1 = \widetilde{w}_2 = \tilde{x}_1\tilde{x}_2 \cdots \tilde{x}_r$ of $B^{\oplus k}(e,e,n)$, we have $\boldsymbol{w_1} = \boldsymbol{w_2} = \mathbf{x}_1 \mathbf{x}_2 \cdots \mathbf{x}_r$ a reduced word over $\mathbf{X}$. On the other hand, applying repeatedly the defining relations in $M([1,\lambda^k])$ yields to $\underline{w} = \underline{x_1}\ \underline{x_2} \cdots \underline{x_r}$ if $\boldsymbol{w} = \mathbf{x}_1\mathbf{x}_2 \cdots \mathbf{x}_r$ is a reduced expression over $\mathbf{X}$. Thus, we can conclude that $\underline{w_1} = \underline{w_2}$, as desired.

Hence we have defined two homomorphisms $\rho: \underline{D_k} \longrightarrow B^{\oplus k}(e,e,n)$ and $\rho': \widetilde{X} \longrightarrow M([1,\lambda^k])$ such that $\rho \circ \rho' = id_{B^{\oplus k}(e,e,n)}$ and $\rho' \circ \rho = id_{M([1,\lambda^k])}$. It follows that $B^{\oplus k}(e,e,n)$ is isomorphic to $M([1,\lambda^k])$.

\end{proof}

Since $B^{\oplus k}(e,e,n)$ is isomorphic to $M([1,\lambda^k])$, we deduce that $B^{\oplus k}(e,e,n)$ is a Garside monoid and we denote by $B^{(k)}(e,e,n)$ its group of fractions.

\subsection{Identifying $B(e,e,n)$}

Now, we want to check which of the monoids $B^{\oplus k}(e,e,n)$ are isomorphic to $B^{\oplus}(e,e,n)$. Assume there exists an isomorphism $\phi : B^{\oplus k}(e,e,n) \longrightarrow  B^{\oplus}(e,e,n)$ for a given $k$ with $1 \leq k \leq e-1$. We start with the following lemma.

\begin{lemma}\label{LemmaIsomMonoids}

The isomorphism $\phi$ fixes $\tilde{s}_3, \tilde{s}_4, \cdots, \tilde{s}_n$ and permutes the $\tilde{t}_i$ where \mbox{$i \in \mathbb{Z}/e\mathbb{Z}$}.  

\end{lemma}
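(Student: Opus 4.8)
The plan is to work entirely with isomorphism-invariant data attached to the two monoids: their atoms, the word-length grading, left-divisibility, and the least common multiples of atoms.

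First I would record that both monoids are \emph{homogeneous}: every defining relation in Definition \ref{DefofB+keen} has the same length on each side, so each monoid is graded by word length and carries a well-defined length function $\tilde{\ell}$. In such a monoid the atoms — the non-identity elements $a$ with $a=bc \Rightarrow b=1$ or $c=1$ — are exactly the length-$1$ elements, i.e. the generators $\widetilde{X}$. Since being an atom is a purely algebraic property, it is preserved by any monoid isomorphism, so $\phi$ restricts to a bijection of $\widetilde{X}$. Moreover $\phi$ preserves $\tilde{\ell}$ (it carries a product of $m$ atoms to a product of $m$ atoms, which in a homogeneous monoid has length $m$) and preserves left-divisibility, hence preserves least common multiples of atoms together with their lengths. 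The lcm's of pairs of atoms are exactly those listed in Proposition \ref{PropLCMofGenIn1lambdak}, and the same table governs both $B^{\oplus k}$ and $B^{\oplus}$ since the only $k$-dependent relation has length $2$.

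Next I would isolate the subset $\{\tilde{t}_0,\dots,\tilde{t}_{e-1}\}$ by a single invariant. Let $\Delta_2$ denote the common value $\tilde{t}_i\tilde{t}_{i-k}$ of the last family of relations. Every $\tilde{t}_j$ left-divides $\Delta_2$, so $\Delta_2$ is a length-$2$ element with at least $e\ge 3$ atom left-divisors; conversely, every reduced word for $\Delta_2$ has the form $\tilde{t}_j\tilde{t}_{j-k}$, because the only length-$2$ relations applicable to a word in the $\tilde{t}$'s keep one inside the $\tilde{t}$'s (the mixed braid relation needs an $\tilde{s}_3$ to be present), so by the normal-form statement of Proposition \ref{PropMatsumoto} its atom left-divisors are \emph{exactly} $\{\tilde{t}_j\}$. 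On the other hand, any other length-$2$ element $xy$ has at most two atom left-divisors: a second atom $z\ne x$ divides $xy$ only via a length-$2$ relation $xy=zw$, which is either a commutation (forcing $z=y$) or the $\tilde{t}$-relation (forcing $xy=\Delta_2$). Hence, for $e\ge 3$, $\Delta_2$ is the unique length-$2$ element with more than two atom left-divisors. This characterization is invariant, so $\phi$ sends $\Delta_2$ to the analogous element of $B^{\oplus}$ and carries its divisor set to the divisor set; that is, $\phi(\{\tilde{t}_i\})=\{\tilde{t}_i\}$, and consequently $\phi$ preserves the complement $\{\tilde{s}_3,\dots,\tilde{s}_n\}$ setwise. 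To pin down each $\tilde{s}_j$ individually I would then use the $\phi$-invariant ``braid graph'' on $\widetilde{X}$ with an edge between two atoms whenever their lcm has length $3$. By Proposition \ref{PropLCMofGenIn1lambdak} the $\tilde{s}$'s form a path $\tilde{s}_3-\tilde{s}_4-\cdots-\tilde{s}_n$, and $\tilde{s}_3$ is the unique $\tilde{s}$ having a length-$3$ lcm with some $\tilde{t}_i$; since $\phi$ fixes $\{\tilde{t}_i\}$ setwise it must fix $\tilde{s}_3$, and a path automorphism fixing an endpoint is the identity, giving $\phi(\tilde{s}_j)=\tilde{s}_j$ for all $j$ while the $\tilde{t}_i$ are permuted.

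The step I expect to be the crux is the invariant characterization of $\Delta_2$ — certifying that ``the unique length-$2$ element with at least three atom left-divisors'' captures $\{\tilde{t}_i\}$ and nothing else, which is where homogeneity, the complete bookkeeping of length-$2$ relations, and the Matsumoto-type statement of Proposition \ref{PropMatsumoto} are genuinely used. The argument also visibly requires $e\ge 3$: for $e=2$ the relation $\tilde{t}_i\tilde{t}_{i-k}=\tilde{t}_j\tilde{t}_{j-k}$ degenerates to a mere commutation, $\Delta_2$ loses its distinguishing divisor count, and indeed $G(2,2,n)$ is the real reflection group of type $D_n$ whose Coxeter diagram admits genuine leg-permuting automorphisms (most strikingly for $D_4$); such small cases must be excluded or handled separately.
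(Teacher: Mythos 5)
Your proof is correct for $e\geq 3$, and it takes a genuinely different route from the paper's. The paper, like you, first shows $\phi$ carries generators to generators by a length argument, but it then separates the generators purely through commutation patterns: $\tilde{s}_n$ is characterized as the only generator commuting with all others except one, then $\tilde{s}_{n-1}$ as the only generator not commuting with $\tilde{s}_n$, and so on down the chain to $\tilde{s}_3$, the $\tilde{t}_i$ being what remains. You work in the opposite direction: you first isolate $\{\tilde{t}_0,\dots,\tilde{t}_{e-1}\}$ through the divisor-count characterization of $\Delta_2$ (the unique length-$2$ element with more than two atom left-divisors), then fix $\tilde{s}_3$ as the unique element of the complement having a length-$3$ lcm with some $\tilde{t}_i$, and propagate along the path $\tilde{s}_3-\tilde{s}_4-\cdots-\tilde{s}_n$. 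Your version is heavier (it needs the lcm table of Proposition \ref{PropLCMofGenIn1lambdak} and the homogeneity bookkeeping), but it is more careful exactly where care is needed: the paper's key claim fails at $e=2$, since there $\tilde{t}_0$ and $\tilde{t}_1$ commute with each other and with $\tilde{s}_4,\dots,\tilde{s}_n$, so for $n\geq 4$ they too commute with all generators but $\tilde{s}_3$; and for $e=2$, $n=4$ the lemma as literally stated is false, the presentation being that of the Artin monoid of type $D_4$, whose triality automorphism moves $\tilde{s}_4$ into $\{\tilde{t}_0,\tilde{t}_1\}$ --- precisely the counterexample you point out. Your explicit exclusion of $e=2$ therefore repairs a gap that the paper's own proof silently shares, and the exclusion is harmless for the paper's application, since $e=2$ forces $k=1$, where Proposition \ref{PropIsomwithMonoidCP} is trivial. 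Two minor points: your appeal to Proposition \ref{PropMatsumoto} is unnecessary --- for length-$2$ elements the classification of word representatives follows directly from homogeneity, as any chain of relations connecting two length-$2$ words can only use the length-$2$ relations --- and the lcm's you quote live a priori in the poset $([1,\lambda^k],\preceq)$, so a sentence transporting them to $B^{\oplus k}(e,e,n)$ through the isomorphism with $M([1,\lambda^k])$ would make the argument airtight.
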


\begin{proof}

Let $f$ be in $\widetilde{X}^{*}$. We have $\boldsymbol{\ell}(f) \leq \boldsymbol{\ell}(\phi(f))$. Thus, we have $\boldsymbol{\ell}(\tilde{x}) \leq \boldsymbol{\ell}(\phi(\tilde{x}))$ for $\tilde{x} \in \widetilde{X}$. Also, $\boldsymbol{\ell}(\phi(\tilde{x})) \leq \boldsymbol{\ell}(\phi^{-1}(\phi(\tilde{x}))) = \boldsymbol{\ell}(x)$. Hence $\boldsymbol{\ell}(\tilde{x}) = \boldsymbol{\ell}(\phi(\tilde{x})) = 1$. It follows that $\phi$ maps generators to generators, that is $\phi(\tilde{x}) \in \{\tilde{t}_0, \tilde{t}_1, \cdots, \tilde{t}_{e-1},\tilde{s}_3, \cdots, \tilde{s}_n\}$.

Furthermore, the only generator of  $B^{\oplus k}(e,e,n)$ that commutes with all other generators except for one of them is $\tilde{s}_{n}$. On the other hand, $\tilde{s}_n$ is the only generator of $B^{\oplus}(e,e,n)$ that satisfies the latter property. Hence $\phi(\tilde{s}_{n}) = \tilde{s}_n$. Next, $\tilde{s}_{n-1}$ is the only generator of $B^{\oplus k}(e,e,n)$ that does not commute with $\tilde{s}_{n}$. The only generator of $B^{\oplus}(e,e,n)$ that does not commute with $\tilde{s}_n$ is also $\tilde{s}_{n-1}$. Hence $\phi(\tilde{s}_{n-1}) = \tilde{s}_{n-1}$. Next, the only generator of  $B^{\oplus k}(e,e,n)$ different from $\tilde{s}_{n}$ and that does not commute with $\tilde{s}_{n-1}$ is $\tilde{s}_{n-2}$. And so on, we get $\phi(\tilde{s}_{j}) = \tilde{s}_j$ for $3 \leq j \leq n$. It remains that $\phi(\{ \tilde{t}_{i}\ |\ 0 \leq i \leq e-1 \}) = \{ \tilde{t}_i\ |\ 0 \leq i \leq e-1 \}$.

\end{proof}

\begin{proposition}\label{PropIsomwithMonoidCP}

The monoids $B^{\oplus k}(e,e,n)$ and $B^{\oplus}(e,e,n)$ are isomorphic if and only if $k \wedge e = 1$.

\end{proposition}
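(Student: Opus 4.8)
The plan is to reduce everything to a question about permutations of the indices by invoking Lemma \ref{LemmaIsomMonoids}. That lemma tells us that any isomorphism $\phi \colon B^{\oplus k}(e,e,n) \to B^{\oplus}(e,e,n)$ fixes each $\tilde{s}_j$ and restricts to a bijection of $\{\tilde{t}_0,\dots,\tilde{t}_{e-1}\}$, so that $\phi(\tilde{t}_i) = \tilde{t}_{\sigma(i)}$ for some permutation $\sigma$ of $\mathbb{Z}/e\mathbb{Z}$. Since the symmetric relations, the mixed relations $\tilde{s}_j\tilde{t}_i = \tilde{t}_i\tilde{s}_j$, and the braid relations $\tilde{s}_3\tilde{t}_i\tilde{s}_3 = \tilde{t}_i\tilde{s}_3\tilde{t}_i$ are identical in the two presentations and hold for \emph{every} index, they impose no constraint on $\sigma$. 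The entire content of the proposition therefore lies in comparing the dual dihedral relations $\tilde{t}_i\tilde{t}_{i-k} = \tilde{t}_j\tilde{t}_{j-k}$ of $B^{\oplus k}$ with the relations $\tilde{t}_a\tilde{t}_{a-1} = \tilde{t}_b\tilde{t}_{b-1}$ of $B^{\oplus}$.

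For the forward implication I would assume $k \wedge e = 1$, so that $k$ is invertible modulo $e$ with inverse $k^{-1}$, and define $\phi$ on generators by $\phi(\tilde{s}_j) = \tilde{s}_j$ and $\phi(\tilde{t}_i) = \tilde{t}_{k^{-1}i}$. The only relation requiring verification is the dual one: $\phi(\tilde{t}_i\tilde{t}_{i-k}) = \tilde{t}_{k^{-1}i}\,\tilde{t}_{k^{-1}i-1}$ has the shape $\tilde{t}_c\tilde{t}_{c-1}$, hence is identified in $B^{\oplus}$ with every $\tilde{t}_{c'}\tilde{t}_{c'-1}$, in particular with $\phi(\tilde{t}_j\tilde{t}_{j-k})$. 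Symmetrically, $\psi(\tilde{t}_a) = \tilde{t}_{ka}$, $\psi(\tilde{s}_j) = \tilde{s}_j$ respects the relations of $B^{\oplus}$ (as $\tilde{t}_{ka}\tilde{t}_{k(a-1)} = \tilde{t}_{ka}\tilde{t}_{ka-k}$ is a $B^{\oplus k}$ relation), and since $\psi\circ\phi$ and $\phi\circ\psi$ fix all generators they are the identities, so $\phi$ is an isomorphism.

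For the converse I would argue from an arbitrary isomorphism $\phi(\tilde{t}_i) = \tilde{t}_{\sigma(i)}$. The key rigidity fact is that in $B^{\oplus}$ a pure $t$-word of length two is equal to another one, $\tilde{t}_a\tilde{t}_{a'} = \tilde{t}_b\tilde{t}_{b'}$, if and only if $(a,a') = (b,b')$ or both have the form $\tilde{t}_c\tilde{t}_{c-1}$. This holds because the presentation is length-homogeneous and the monoid is cancellative (it is Garside): a length-two word can only be transformed through length-two relations, and among these the only ones acting on pure $t$-words are the $\tilde{t}_a\tilde{t}_{a-1} = \tilde{t}_b\tilde{t}_{b-1}$. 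Applying $\phi$ to the relation $\tilde{t}_i\tilde{t}_{i-k} = \tilde{t}_{i+1}\tilde{t}_{i+1-k}$ gives $\tilde{t}_{\sigma(i)}\tilde{t}_{\sigma(i-k)} = \tilde{t}_{\sigma(i+1)}\tilde{t}_{\sigma(i+1-k)}$ in $B^{\oplus}$; as $\sigma$ is injective the leading letters $\sigma(i),\sigma(i+1)$ differ, so the rigidity fact forces $\sigma(i-k) = \sigma(i) - 1$ for all $i \in \mathbb{Z}/e\mathbb{Z}$.

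Finally I would extract the arithmetic. Iterating $\sigma(i-k) = \sigma(i)-1$ once around a cycle of the map $i \mapsto i-k$ on $\mathbb{Z}/e\mathbb{Z}$, whose length is $e/(k\wedge e)$, returns to the starting index and yields $\sigma(i) = \sigma(i) - e/(k\wedge e)$ in $\mathbb{Z}/e\mathbb{Z}$; hence $e \mid e/(k\wedge e)$, and since $1 \le e/(k\wedge e) \le e$ this forces $k\wedge e = 1$. The main obstacle, and the only place where the Garside (cancellative, homogeneous) structure is genuinely needed, is establishing the length-two rigidity claim for $B^{\oplus}$; once that is in hand, both the construction of $\phi$ and the arithmetic deduction are routine.
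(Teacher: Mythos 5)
Your proof is correct, and while the ``if'' direction coincides with the paper's, your ``only if'' direction takes a genuinely different (and more rigorous) route. For $k \wedge e = 1$ the paper defines essentially the same index-scaling map (it uses $\tilde{t}_i \mapsto \tilde{t}_{(i+1)k}$ where you use $\tilde{t}_i \mapsto \tilde{t}_{k^{-1}i}$), but it justifies injectivity only with the phrase ``bijective on the relations''; your explicit two-sided inverse $\psi$ makes this airtight. For the converse, the paper argues via the presentation diagram $\Gamma_k$: it asserts that an isomorphism forces $\Gamma_k$ to have a single connected component (a closed chain) among the $\tilde{t}_i$-vertices, hence $k$ generates $\mathbb{Z}/e\mathbb{Z}$ --- but it never explains why a monoid isomorphism must respect this graph. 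Your rigidity lemma supplies exactly the missing justification: in the homogeneous presentation of $B^{\oplus}(e,e,n)$, a length-two word can only be rewritten by relations whose sides have length two and match the whole word, and the only such relations on pure $t$-words are the dual dihedral ones, so $\tilde{t}_a\tilde{t}_{a'} = \tilde{t}_b\tilde{t}_{b'}$ forces either equality of words or the descent form $\tilde{t}_c\tilde{t}_{c-1}$ on both sides. From there your functional equation $\sigma(i-k) = \sigma(i)-1$ and the orbit-length computation (the orbit of $i \mapsto i-k$ has length $e/(k\wedge e)$, which must be divisible by $e$) replace the paper's component count --- and indeed the two arguments are two faces of the same fact, since the components of $\Gamma_k$ are precisely the cosets of $\langle k \rangle$ and its edges are detected by exactly the rigidity you prove. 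One small simplification: cancellativity (the Garside property) is not actually needed for your rigidity claim; homogeneity of the presentation alone suffices, since equality in a homogeneously presented monoid is the congruence generated by length-preserving rewrites, so your parenthetical appeal to the Garside structure can be dropped.
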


\begin{proof}

Assume there exists an isomorphism $\phi$ between the monoids $B^{\oplus k}(e,e,n)$ and $B^{\oplus}(e,e,n)$. By Lemma \ref{LemmaIsomMonoids}, we have $\phi(\tilde{s}_{j}) = \tilde{s}_j$ for $3 \leq j \leq n$ and \mbox{$\phi(\{ \tilde{t}_{i}\ |\ 0 \leq i \leq e-1 \})$} $= \{ \widetilde{t}_i\ |\ 0 \leq i \leq e-1 \}$. Write $\tilde{t}_{a_i}$ for the generator of $B^{\oplus k}(e,e,n)$ for which $\phi(\tilde{t}_{a_i}) = \tilde{t}_i$. The existence of the isomorphism implies $\tilde{t}_{a_{i+1}}\tilde{t}_{a_i} = \tilde{t}_{a_1}\tilde{t}_{a_0}$ for each $i \in \mathbb{Z}/e\mathbb{Z}$. But the only relations of this type in $B^{\oplus k}(e,e,n)$ are $\tilde{t}_{a_{i}+k}\tilde{t}_{a_i} = \tilde{t}_{a_0 + k}\tilde{t}_{a_0}$. Thus, $a_1 = a_0+k$, $a_2 = a_0+2k$, $\cdots$, $a_{e-1} = a_0 + (e-1)k$. Since $\phi$ is bijective on the $\tilde{t}_i$-type generators, $\left\{ a_0 + ik\ |\ i \in \mathbb{Z}/e\mathbb{Z} \right\} = \mathbb{Z}/e\mathbb{Z}$, so $k \wedge e = 1$.

Conversely, let $1 \leq k \leq e-1$ such that $k \wedge e = 1$. We define a map $\phi : B^{\oplus}(e,e,n) \longrightarrow B^{\oplus k}(e,e,n)$ where $\phi(\tilde{s}_j) = \tilde{s}_{j}$ for $3 \leq j \leq n$ and $\phi(\tilde{t}_i) = \phi(\tilde{t}_{ik})$, that is $\phi(\tilde{t}_0) = \tilde{t}_{0}$, $\phi(\tilde{t}_1) = \tilde{t}_{k}$, $\phi(\tilde{t}_2) = \tilde{t}_{2k}$, $\cdots$, $\phi(\tilde{t}_{e-1}) = \widetilde{t}_{(e-1)k}$. The map $\phi$ is a well-defined monoid homomorphism, which is both surjective (as it sends a generator of $B^{\oplus}(e,e,n)$ to a generator of $B^{\oplus k}(e,e,n)$) and injective (as it is bijective on the relations). Hence $\phi$ defines an isomorphism of monoids.

\end{proof}

When $k \wedge e = 1$, since $B^{\oplus k}(e,e,n)$ is isomorphic to $B^{\oplus}(e,e,n)$, we have the following.

\begin{cor}\label{CorIsomGrFractBraidGroup}

$B^{(k)}(e,e,n)$ is isomorphic to the complex braid group $B(e,e,n)$ for $k \wedge e = 1$.

\end{cor}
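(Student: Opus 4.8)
The plan is to obtain this corollary almost immediately from Proposition \ref{PropIsomwithMonoidCP}, using the general principle that a monoid satisfying Ore's conditions determines its group of fractions up to canonical isomorphism. First I would record the hypothesis: since $k \wedge e = 1$, Proposition \ref{PropIsomwithMonoidCP} supplies an explicit isomorphism of monoids $\phi : B^{\oplus}(e,e,n) \longrightarrow B^{\oplus k}(e,e,n)$ fixing each $\tilde{s}_j$ and sending $\tilde{t}_i \longmapsto \tilde{t}_{(i+1)k}$ with indices read modulo $e$.

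Next I would transport $\phi$ to the groups of fractions. Both $B^{\oplus}(e,e,n)$ and $B^{\oplus k}(e,e,n)$ are Garside monoids --- the former by Corran and Picantin \cite{CorranPicantin}, the latter by Theorem \ref{TheoremIntervalStructure} through its identification with $M([1,\lambda^k])$ --- so each is cancellative, satisfies Ore's conditions, and embeds into its group of fractions. The key observation is that a group of fractions is a localization, hence enjoys a universal property: any monoid homomorphism into a group factors uniquely through the group of fractions. Applying this to the composite of $\phi$ (resp.\ $\phi^{-1}$) with the canonical embedding of the target monoid into its group of fractions yields two mutually inverse group homomorphisms between the groups of fractions, so $\phi$ extends to a group isomorphism. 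Equivalently, one may note that a Garside group admits the very presentation of its Garside monoid read as a group presentation (Definition \ref{DefofB+keen} for $B^{(k)}(e,e,n)$, Figure \ref{PresofCPBeen} for $B(e,e,n)$); since $\phi$ carries generators to generators and defining relations to defining relations, it is simultaneously an isomorphism of these group presentations.

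It then remains to identify the two groups of fractions. By definition $B^{(k)}(e,e,n)$ is the group of fractions of $B^{\oplus k}(e,e,n)$, while the group of fractions of $B^{\oplus}(e,e,n) = B^{\oplus 1}(e,e,n)$ is the complex braid group $B(e,e,n)$, as recalled in Section \ref{SectionGeenBeen}. Hence the extension of $\phi$ gives $B(e,e,n) \cong B^{(k)}(e,e,n)$. I expect no serious obstacle here: the only point needing a word of justification is that the monoid isomorphism descends to the groups of fractions, and this is exactly the universal property of Ore localization quoted above; everything else is bookkeeping already carried out in Proposition \ref{PropIsomwithMonoidCP}.
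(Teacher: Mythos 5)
Your proposal is correct and follows essentially the same route as the paper, which deduces the corollary directly from Proposition \ref{PropIsomwithMonoidCP} via the observation that isomorphic Garside (Ore) monoids have isomorphic groups of fractions. You merely make explicit the routine step the paper leaves implicit --- that the monoid isomorphism $\phi$ extends to the groups of fractions by the universal property of Ore localization (or, equivalently, by reading the monoid presentations as group presentations) --- and both of your justifications for that step are sound.
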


The reason that the proof of Proposition \ref{PropIsomwithMonoidCP} fails in the case $k \wedge e \neq 1$ is that we have more than one connected component in $\Gamma_k$ that link $\tilde{t}_0$, $\tilde{t}_1$, $\cdots$, and $\tilde{t}_{e-1}$ together, as we can see in Figure \ref{PresofB2882}. Actually, it is easy to check that the number of connected components that link $\tilde{t}_0$, $\tilde{t}_1$, $\cdots$, and $\tilde{t}_{e-1}$ together is the number of cosets of the subgroup of $\mathbb{Z}/e\mathbb{Z}$ generated by the class of $k$, that is equal to $k \wedge e$, and each of these cosets have $e'=e/k \wedge e$ elements. This will be useful in the next section.

\section{New Garside structures}\label{SectionNewGarsideGroups}

When $k \wedge e \neq 1$, we describe $B^{(k)}(e,e,n)$ as an amalgamated product of $k \wedge e$ copies of the complex braid group $B(e',e',n)$ with $e'=e/e \wedge k$, over a common subgroup which is the Artin-Tits group $B(2,1,n-1)$. This allows us to compute the center of $B^{(k)}(e,e,n)$. Finally, using the Garside structure of $B^{(k)}(e,e,n)$, we compute its first and second integral homology groups using the Dehornoy-Lafont complex \cite{DehLafHomologyofGaussianGroups} and the method used in \cite{CalMarHomologyComputations}.\\

By an adaptation of the results of Crisp \cite{CrispInjectivemaps} as in Lemma 5.2 of \cite{CalMarHomologyComputations}, we have the following embedding. Let $B:=B(2,1,n-1)$ be the Artin-Tits group defined by a presentation with generators $q_1$, $q_2$, $\cdots$, $q_{n-1}$ and relations that can be described by the following diagram presentation. This diagram follows the standard conventions for Artin-Tits diagrams. 

\begin{figure}[H]
\begin{center}
\begin{tikzpicture}

\node[draw, shape=circle, label=above:$q_1$] (1) at (0,0) {};
\node[draw, shape=circle, label=above:$q_2$] (2) at (1.5,0) {};
\node[draw, shape=circle, label=above:$q_3$] (3) at (3,0) {};
\node[draw, shape=circle, label=above:$q_{n-2}$] (n-2) at (7,0) {};
\node[draw, shape=circle, label=above:$q_{n-1}$] (n-1) at (8.5,0) {};

\draw[double,thick,-] (1) to (2);
\draw[thick,-] (2) to (3);
\draw[thick,-] (n-2) to (n-1);
\draw[thick,dashed] (3) to (n-2);

\end{tikzpicture}
\end{center}
\end{figure}

\begin{proposition}

The group $B$ injects in $B^{(k)}(e,e,n)$.

\end{proposition}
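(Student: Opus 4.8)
The plan is to realize $B=B(2,1,n-1)$ as a sub-Garside structure of $B^{\oplus k}(e,e,n)$ by writing down an explicit homomorphism and proving it injective through the LCM-homomorphism machinery of Crisp, in the form adapted in Lemma~5.2 of \cite{CalMarHomologyComputations}. Let $q_1,q_2,\dots,q_{n-1}$ denote the standard generators of $B$, with $q_1$ the node carrying the double bond. Since the common value of the relations $\tilde{t}_{i}\tilde{t}_{i-k}=\tilde{t}_{j}\tilde{t}_{j-k}$ does not depend on the index, I set $\delta:=\tilde{t}_{i}\tilde{t}_{i-k}$ and define
$$\psi\colon B\longrightarrow B^{(k)}(e,e,n),\qquad q_1\longmapsto \delta,\qquad q_j\longmapsto \tilde{s}_{j+1}\ (2\le j\le n-1).$$
It is convenient to introduce $\psi$ first as a monoid homomorphism $B^{+}(2,1,n-1)\to B^{\oplus k}(e,e,n)$, observing that each image is a \emph{simple} element of the Garside monoid.

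First I would check that $\psi$ respects the defining relations of $B$. The type-$A$ braid and commutation relations among $q_2,\dots,q_{n-1}$ transfer verbatim to $\tilde{s}_3,\dots,\tilde{s}_n$, and $\delta$ commutes with $\tilde{s}_4,\dots,\tilde{s}_n$ because each $\tilde{t}_i$ does. The only relation needing work is the length-$4$ relation $q_1q_2q_1q_2=q_2q_1q_2q_1$, that is $\delta\tilde{s}_3\delta\tilde{s}_3=\tilde{s}_3\delta\tilde{s}_3\delta$. Using $\tilde{s}_3\tilde{t}_i\tilde{s}_3=\tilde{t}_i\tilde{s}_3\tilde{t}_i$ and the representatives $\delta=\tilde{t}_{k}\tilde{t}_{0}=\tilde{t}_{0}\tilde{t}_{-k}$, one computes
$$\delta\tilde{s}_3\delta\tilde{s}_3=\tilde{t}_{k}\tilde{t}_{0}\tilde{s}_3\tilde{t}_{0}\tilde{t}_{-k}\tilde{s}_3=\tilde{t}_{k}\tilde{s}_3\tilde{t}_{0}\tilde{s}_3\tilde{t}_{-k}\tilde{s}_3=\tilde{t}_{k}\tilde{s}_3\tilde{t}_{0}\tilde{t}_{-k}\tilde{s}_3\tilde{t}_{-k}=\tilde{t}_{k}\tilde{s}_3\,\delta\,\tilde{s}_3\tilde{t}_{-k},$$
and the symmetric computation gives $\tilde{s}_3\delta\tilde{s}_3\delta=\tilde{t}_{k}\tilde{s}_3\,\delta\,\tilde{s}_3\tilde{t}_{-k}$ as well, so the two sides agree and $\psi$ is well defined.

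For injectivity I would verify that $\psi$ is an LCM homomorphism from the Artin monoid of type $B_{n-1}$ into the Garside monoid $B^{\oplus k}(e,e,n)$: for each edge $\{q_a,q_b\}$ of the diagram one checks that $\psi$ sends the fundamental element of the rank-$2$ parabolic $\langle q_a,q_b\rangle$ to the least common multiple of $\psi(q_a)$ and $\psi(q_b)$ in $([1,\lambda^k],\preceq)$, and that $\psi$ is injective on each such parabolic. For the type-$A$ corners $\{q_i,q_{i+1}\}$ with $i\ge 2$, and for the commuting pairs, this is immediate from the identities $\tilde{s}_i\vee\tilde{s}_{i+1}=\tilde{s}_i\tilde{s}_{i+1}\tilde{s}_i$ and $\tilde{s}_i\vee\tilde{s}_j=\tilde{s}_i\tilde{s}_j$ recorded in Proposition~\ref{PropLCMofGenIn1lambdak} (for the pairs involving $q_1$ one also uses that $\delta$ commutes with $\tilde{s}_{j+1}$, $j\ge 3$, so their lcm is the product). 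For the single $B_2$ corner $\{q_1,q_2\}$ one must establish $\delta\vee\tilde{s}_3=\delta\tilde{s}_3\delta\tilde{s}_3=\psi(q_1q_2q_1q_2)$, together with the matching right-lcm statement coming from Proposition~\ref{PropLCMofGenRight}. Once all these conditions hold, Crisp's theorem yields that the monoid homomorphism is injective and extends to an injection of groups of fractions, giving the embedding $B\hookrightarrow B^{(k)}(e,e,n)$.

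The main obstacle is precisely the $B_2$-corner verification: because $\psi(q_1)=\delta$ is not an atom of $B^{\oplus k}(e,e,n)$ but a length-$2$ simple, one must confirm that the least common multiple of $\delta$ and $\tilde{s}_3$ is exactly $\delta\tilde{s}_3\delta\tilde{s}_3$ and not a proper left-divisor of it, and dually for $\preceq_r$. Showing that $\delta$ behaves like a single atom with respect to both divisibility orders, so that the rank-$2$ parabolic of type $B_2$ embeds faithfully, is the step where the adaptation of Crisp's criterion must be carried out with care.
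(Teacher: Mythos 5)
Your proposal is correct and follows essentially the same route as the paper: you define the identical homomorphism $q_1 \mapsto \tilde{t}_i\tilde{t}_{i-k}$, $q_j \mapsto \tilde{s}_{j+1}$, and invoke the same injectivity criterion (Lemma 5.2 of \cite{CalMarHomologyComputations}, adapting Crisp) after checking that generator lcm's are preserved. The only difference is one of detail — the paper dismisses the lcm verification as ``easy to check'' while you spell out the relation $\delta\tilde{s}_3\delta\tilde{s}_3 = \tilde{s}_3\delta\tilde{s}_3\delta$ and correctly flag the $B_2$-corner computation $\delta \vee \tilde{s}_3 = \delta\tilde{s}_3\delta\tilde{s}_3$ (which does hold, e.g.\ via $\delta\tilde{s}_3\delta\tilde{s}_3 = \tilde{t}_k\tilde{s}_3\tilde{t}_0\tilde{s}_3\tilde{t}_{-k}\tilde{s}_3$ and an iterated-complement computation) as the one step needing care.
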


\begin{proof}

Define a monoid homomorphism $\phi : B^{+} \longrightarrow B^{\oplus k}(e,e,n) : q_{1} \longmapsto \tilde{t}_i\tilde{t}_{i-k}$, \mbox{$q_2 \longmapsto \tilde{s}_3$}, $\cdots$, $q_{n-1} \longmapsto \tilde{s}_n$. It is easy to check that for all $x, y \in \{q_1, q_2, \cdots, q_{n-1}\}$, we have $lcm(\phi(x), \phi(y))=\phi(lcm(x,y))$. Hence by applying Lemma 5.2 of \cite{CalMarHomologyComputations}, \mbox{$B(2,1,n-1)$} injects in $B^{(k)}(e,e,n)$.

\end{proof}

We construct $B^{(k)}(e,e,n)$ as follows.

\begin{proposition}\label{PropBkeenAmalgamatedProduct}

Let $B(1) := B(e',e',n)$ where $e'=e/e \wedge k$. Inductively, define $B(i+1)$ to be the amalgamated product $B(i+1) := B(i) *_{B} B(e',e',n)$ over $B = B(2,1,n-1)$. Then we have $B^{(k)}(e,e,n)$ is isomorphic to $B(e \wedge k)$.

\end{proposition}

\begin{proof}

Due to the presentation of $B^{(k)}(e,e,n)$ given in Definition \ref{DefofB+keen} and to the presentation of the amalgamated products (see Section 4.2 of \cite{CombinatorialGroupTheory}), one can deduce that $B(e \wedge k)$ is isomorphic to $B^{(k)}(e,e,n)$.

\end{proof}

\begin{figure}[H]
\begin{center}
\begin{tikzpicture}

\node[draw, shape=rectangle, label=below:{$B(e',e',n)$}] (1) at (0,0) {};
\node[draw, shape=rectangle, label=below:{$B(e',e',n)$}] (2) at (2,0) {};
\node[draw, shape=rectangle, label=below:{$B(e',e',n)$}] (3) at (-1,1) {};
\node[draw, shape=rectangle, label=right:{$B(2)$}] (4) at (1,1) {};
\node[draw, shape=rectangle, label=right:{$B(3)$}] (5) at (0,2) {};
\node[draw, shape=rectangle, label=left:{$B(e',e',n)$}] (6) at (-3,3) {};
\node[draw, shape=rectangle] (7) at (-1,3) {};
\node[draw, shape=rectangle, label=right:{$B(e \wedge k) \simeq B^{(k)}(e,e,n)$}] (8) at (-2,4) {};

\draw[thick,-] (1) to (4);
\draw[thick,-] (2) to (4);
\draw[thick,-] (5) to (3);
\draw[thick,-] (5) to (4);
\draw[thick,dashed,-] (7) to (5);
\draw[thick,dashed,-] (-1.5,1.5) to (-2.5,2.5);
\draw[thick,-] (8) to (6);
\draw[thick,-] (8) to (7);

\end{tikzpicture}
\end{center}
\caption{The construction of $B^{(k)}(e,e,n)$.}
\end{figure}
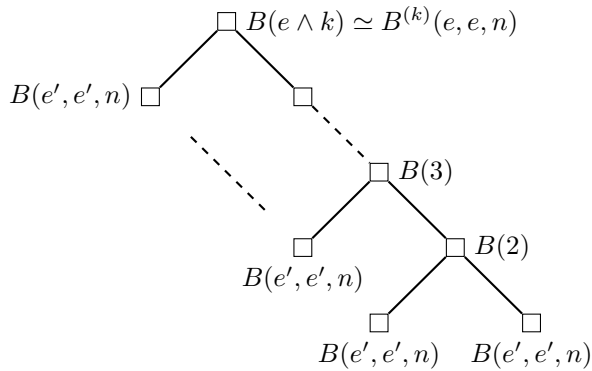

\begin{example}

Consider the case of $B^{(2)}(6,6,3)$. It is an amalgamated product of $k \wedge e = 2$ copies of $B(e',e',3)$ with $e' = e/(k \wedge e) = 3$ over the Artin-Tits group $B(2,1,2)$. Consider the following diagram of this amalgamation.\\
The presentation of $B(3,3,3) * B(3,3,3)$ over $B(2,1,2)$ is as follows:
\begin{itemize}

\item the generators are the union of the generators of the two copies of $B(3,3,3)$, 
\item the relations are the union of the relations of the two copies of $B(3,3,3)$ with the additional relations $\tilde{s}_3 = \tilde{s}'_3$ and $\tilde{t}_2\tilde{t}_0 = \tilde{t}_3\tilde{t}_1$

\end{itemize}
This is exactly the presentation of $B^{(2)}(6,6,3)$ given in Definition \ref{DefofB+keen}.

\begin{small}
\begin{center}

\begin{tabular}{lll}

\begin{tikzpicture}
\node[draw, shape=circle, label=above:$\tilde{t}_3\tilde{t}_1$] (1) at (0,0) {};
\node[draw, shape=circle, label=above:$\tilde{s}_3$] (2) at (1.5,0) {};
\draw[double,thick,-] (1) to (2);
\draw[thick,->] (0.75,-0.8) to (0.75,-1.4);
\end{tikzpicture} & \begin{tikzpicture}
\node[] () at (0,0) {};
\node[] () at (0,1.4) {$\overset{\sim}{\longrightarrow}$};
\end{tikzpicture}& \begin{tikzpicture}
\node[draw, shape=circle, label=above:$\tilde{t}_2\tilde{t}_0$] (1) at (0,0) {};
\node[draw, shape=circle, label=above:$\tilde{s}'_3$] (2) at (1.5,0) {};
\draw[double,thick,-] (1) to (2);
\draw[thick,->] (0.75,-0.8) to (0.75,-1.4);
\end{tikzpicture}\\

\begin{tikzpicture}[yscale=0.8,xscale=1]

\draw[thick,dashed] (0,0) ellipse (1cm and 0.5cm);

\node[draw, shape=circle, fill=white, label=above:$\tilde{t}_1$] (t1) at (0,-0.5) {};
\node[draw, shape=circle, fill=white, label=above:$\tilde{t}_3$] (t3) at (1,0) {};
\node[draw, shape=circle, fill=white, label=above:$\tilde{t}_5$] (t5) at (-1,0) {};

\node[draw, shape=circle, fill=white, label=below right:$\tilde{s}_3$] (s3) at (0,-1.5) {};

\draw[thick,-] (t1) to (s3);
\draw[thick,-, bend left] (t3) to (s3);
\draw[thick,-,bend right] (t5) to (s3);

\end{tikzpicture} & & \begin{tikzpicture}[yscale=0.8,xscale=1]

\draw[thick,dashed] (0,0) ellipse (1cm and 0.5cm);

\node[draw, shape=circle, fill=white, label=above:$\tilde{t}_0$] (t0) at (0,-0.5) {};
\node[draw, shape=circle, fill=white, label=above:$\tilde{t}_2$] (t2) at (1,0) {};
\node[draw, shape=circle, fill=white, label=above:$\tilde{t}_4$] (t4) at (-1,0) {};

\node[draw, shape=circle, fill=white, label=below right:$\tilde{s}'_3$] (s3) at (0,-1.5) {};

\draw[thick,-] (t0) to (s3);
\draw[thick,-, bend left] (t2) to (s3);
\draw[thick,-,bend right] (t4) to (s3);

\end{tikzpicture}\\

\end{tabular}

\begin{tikzpicture}

\node[] () at (0,0) {};
\draw[thick,->] (0,0) to (1,-0.5);
\draw[thick,->] (3.5,0) to (2.5,-0.5);

\end{tikzpicture}

$B^{(2)}(6,6,3)$

\end{center}
\end{small}

\end{example}

\begin{proposition}

The center of $B^{(k)}(e,e,n)$ is infinite cyclic isomorphic to $\mathbb{Z}$.

\end{proposition}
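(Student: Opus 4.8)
The plan is to prove the two inclusions separately: the centre contains a copy of $\mathbb{Z}$ coming from the Garside structure, and it is contained in one coming from the amalgamated product decomposition.

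For the lower bound I would use that $B^{(k)}(e,e,n)$ is a Garside group (Theorem \ref{TheoremIntervalStructure}) with Garside element $\Delta$, the image of $\underline{\lambda^{k}}$. Conjugation by $\Delta$ permutes the finite set of atoms, so some power of the automorphism $\phi(x)=\Delta^{-1}x\Delta$ is the identity; hence $\Delta^{N}$ is central for some $N\geq 1$. Since Garside groups are torsion-free, $\langle \Delta^{N}\rangle\cong\mathbb{Z}$, so $Z(B^{(k)}(e,e,n))$ is a nontrivial torsion-free abelian group.

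For the upper bound I would distinguish according to $d:=k\wedge e$. If $d=1$, then by Corollary \ref{CorIsomGrFractBraidGroup} the group is isomorphic to the complex braid group $B(e,e,n)$, whose centre is infinite cyclic --- a standard fact for the complex braid group of an irreducible reflection group, also deducible from the connectedness of its Corran--Picantin Garside structure \cite{CorranPicantin} --- and we are done. Assume now $d\geq 2$ and set $A:=B(e',e',n)$ with $e'=e/d\geq 2$. By Proposition \ref{PropBkeenAmalgamatedProduct}, $G:=B^{(k)}(e,e,n)$ is the fundamental group of the star-shaped graph of groups whose central vertex carries $B:=B(2,1,n-1)$, whose $d$ leaves each carry a copy of $A$, and all of whose edge groups are $B$, the inclusions $B\subsetneq A$ being proper of infinite index. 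Let $T$ be the Bass--Serre tree \cite{CombinatorialGroupTheory}; the action is minimal since each leaf carries a group strictly larger than its edge group. The key claim is that $Z(G)\subseteq B$. Let $z\in Z(G)$. If $z$ acted hyperbolically, its axis would be $G$-invariant (because $gzg^{-1}=z$ for all $g$), and minimality would force $T$ to be that line; but every leaf vertex has degree $[A:B]=\infty$, so $T$ is not a line. Hence $z$ is elliptic, and being central it fixes an entire $G$-orbit $O$ of vertices. Choosing a vertex $u\notin O$ adjacent to $O$, its neighbours form a single orbit of size $\geq 2$ (namely $[A:B]$ if $O$ is the central orbit, or $d$ if $O$ is a leaf orbit), so $z$ fixes two such neighbours, hence the length-two geodesic through $u$, and therefore lies in an edge stabiliser $gBg^{-1}$. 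Centrality then gives $z=g^{-1}zg\in B$.

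Finally I would combine the bounds: any $z\in Z(G)\subseteq B\subseteq A$ centralises the factor $A$, hence lies in $Z(A)=Z(B(e',e',n))\cong\mathbb{Z}$ (the same fact invoked in the case $d=1$). Thus $Z(G)$ embeds in $\mathbb{Z}$, so it is trivial or infinite cyclic; as it contains $\langle\Delta^{N}\rangle\neq 1$, it is infinite cyclic, completing the proof. The main obstacle is the inclusion $Z(G)\subseteq B$: one must rule out hyperbolic central elements and then descend a central element into the amalgamated subgroup, which is precisely where the action on the Bass--Serre tree is used; a secondary input is the infinite cyclicity of the centre of the single factor $B(e',e',n)$, which I would take from the irreducibility (connectedness) of its Garside structure.
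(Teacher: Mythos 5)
Your proof is correct in substance but takes a genuinely different route from the paper, whose own proof is three lines long: it cites Corollary 4.5 of \cite{CombinatorialGroupTheory} (the centre of an amalgamated product $A *_{U} B$ with $A \neq U \neq B$ lies in $U$ and is the intersection of the centres of the factors) together with the fact from \cite{BMR} that $B(2,1,n-1)$ and $B(e',e',n)$ have infinite cyclic centres. Your Bass--Serre argument is essentially a from-scratch proof of that corollary adapted to the star-shaped graph of groups, and your Garside-theoretic lower bound (some power $\Delta^{N}$ of the Garside element is central and of infinite order) is a genuine addition: the paper's deduction ``both centres are infinite cyclic, hence so is their intersection'' silently assumes the intersection is nontrivial, and your $\langle \Delta^{N}\rangle$ argument is exactly what certifies this. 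You also handle $k \wedge e = 1$ separately via Corollary \ref{CorIsomGrFractBraidGroup}, where the paper's amalgam decomposition formally degenerates. So what your approach buys is completeness and self-containment; what the paper's buys is brevity, by outsourcing the whole tree argument to a standard reference.

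Two loose ends in your write-up. First, in the elliptic case, the claim that the neighbours of $u$ ``form a single orbit of size $\geq 2$'' fails when $O$ is a leaf orbit: a central-type vertex $u$ has exactly $d$ neighbours, one in each of the $d$ leaf orbits, so only one of them lies in $O$ and you cannot conclude that $z$ fixes two neighbours of $u$. The repair is immediate and in fact cleaner than your case split: $\mathrm{Fix}(z)$ is a nonempty $G$-invariant subtree (fixed sets of elliptic isometries of a tree are connected, and $g\,\mathrm{Fix}(z) = \mathrm{Fix}(gzg^{-1}) = \mathrm{Fix}(z)$ for all $g$), so minimality forces $\mathrm{Fix}(z) = T$; then $z$ stabilises every edge, lies in some $gBg^{-1}$, and centrality puts it in $B$. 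Second, you assert $[A:B] = \infty$ without justification; it is true (e.g.\ $B(2,1,n-1)$ and $B(e',e',n)$ have cohomological dimensions $n-1$ and $n$ respectively, and a finite-index subgroup would have the same cohomological dimension), and you genuinely need $[A:B] \geq 3$ when $d = 2$ to rule out $T$ being a line, so this step deserves an explicit sentence.
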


\begin{proof}

By Corollary 4.5 of \cite{CombinatorialGroupTheory} that computes the center of an amalgamated product, the center of $B^{(k)}(e,e,n)$ is the intersection of the centers of $B$ and $B(e',e',n)$. Since the center of $B$ and $B(e',e',n)$ is infinite cyclic \cite{BMR}, the center of $B^{(k)}(e,e,n)$ is infinite cyclic isomorphic to $\mathbb{Z}$.

\end{proof}

Since the center of $B(e,e,n)$ is also isomorphic to $\mathbb{Z}$ (see \cite{BMR}), in order to distinguish $B^{(k)}(e,e,n)$ from the braid groups $B(e,e,n)$, we compute its first and second integral homology groups. We recall the Dehornoy-Lafont complex and follow the method in \cite{CalMarHomologyComputations} where the second integral homology group of $B(e,e,n)$ is computed.\\

We order the elements of $\widetilde{X}$ by considering $\tilde{s}_{n} < \tilde{s}_{n-1} < \cdots < \tilde{s}_{3} < \tilde{t}_{0} < \tilde{t}_{1} < \cdots < \tilde{t}_{e-1}$. For $f \in B^{\oplus k}(e,e,n)$, denote by $d(f)$ the least element in $\widetilde{X}$ which divides $f$ on the right. An $r$-cell is an $r$-tuple $[x_1, \cdots, x_r]$ of elements in $\widetilde{X}$ such that $x_1 < x_2 < \cdots < x_r$ and $x_i = d(lcm(x_i,x_{i+1}, \cdots, x_{r}))$. The set $C_r$ of $r$-chains is the free $\mathbb{Z} B^{\oplus k}(e,e,n)$-module with basis $\widetilde{X}_r$, the set of all $r$-cells with the convention $\widetilde{X}_0 = \{[\emptyset]\}$. We provide the definition of the differential $\partial_r : C_r \longrightarrow C_{r-1}$.

\begin{definition}\label{DefintionDifferentialHomology}

Let $[\alpha,A]$ be an $(r+1)$-cell, with $\alpha \in \widetilde{X}$ and $A$ an $r$-cell. Denote $\alpha_{/A}$ the unique element of $B^{\oplus k}(e,e,n)$ such that $(\alpha_{/A})lcm(A) = lcm(\alpha,A)$. Define the differential $\partial_r : C_r \longrightarrow C_{r-1}$ recursively through two $\mathbb{Z}$-module homomorphisms $s_r: C_r \longrightarrow C_{r+1}$ and $u_r: C_r \longrightarrow C_r$ as follows.

\begin{center}

$\partial_{r+1}[\alpha,A] = \alpha_{/A}[A] - u_r(\alpha_{/A}[A])$,

\end{center}
with $u_{r+1} = s_r \circ \partial_{r+1}$ where $u_0(f[\emptyset]) = [\emptyset]$, for all $f \in B^{\oplus k}(e,e,n)$, and\\
$s_r([\emptyset])=0$, $s_r(x[A]) = 0$ if $\alpha:=d(x lcm(A))$ coincides with the first coefficient in $A$, and otherwise
$s_r(x[A]) = y[\alpha,A] + s_{r}(y u_{r}(\alpha_{/A}[A]))$ with $x = y \alpha_{/A}$.\\

\end{definition}

We provide the result of the computation of $\partial_1$, $\partial_2$, and $\partial_3$ for all $1$, $2$, and $3$-cells, respectively.
For all $x \in \widetilde{X}$, we have
\begin{center}

$\partial_1[x] = (x-1)[\emptyset]$,

\end{center}
for all $1 \leq i \leq e-1$,
\begin{center}

$\partial_2[\tilde{t}_0,\tilde{t}_i] = \tilde{t}_{i+k}[\tilde{t}_i] - \tilde{t}_k[\tilde{t}_0] - [\tilde{t}_k] + [\tilde{t}_{i+k}]$,

\end{center}
for $x,y \in \widetilde{X}$ with $xyx=yxy$,
\begin{center}

$\partial_2[x,y] = (yx+1-x)[y] + (y-xy-1)[x]$, and

\end{center}
for $x,y \in \widetilde{X}$ with $xy=yx$,
\begin{center}

$\partial_2[x,y] = (x-1)[y] - (y-1)[x]$.

\end{center}
For $j \neq -k$ mod $e$, we have:
\begin{center}

$\partial_3[\tilde{s}_3,\tilde{t}_0,\tilde{t}_j] = (\tilde{s}_3\tilde{t}_k\tilde{t}_0\tilde{s}_3 - \tilde{t}_k\tilde{t}_0\tilde{s}_3 + \tilde{t}_{j+2k}\tilde{s}_3)[\tilde{t}_0,\tilde{t}_j] - \tilde{t}_{j+2k}\tilde{s}_3\tilde{t}_{j+k}[\tilde{s}_3,\tilde{t}_j] + (\tilde{t}_{j+2k} - \tilde{s}_3\tilde{t}_{j+2k})[\tilde{s}_3,\tilde{t}_{j+k}] + (\tilde{s}_3 - \tilde{t}_{j+2k}\tilde{s}_3 - 1)[\tilde{t}_0, \tilde{t}_{j+k}] + (\tilde{s}_3 \tilde{t}_{2k} - \tilde{t}_{2k})[\tilde{s}_3,\tilde{t}_k] + (\tilde{t}_{2k}\tilde{s}_3 + 1 - \tilde{s}_3)[\tilde{t}_0,\tilde{t}_k] + [\tilde{s}_3,\tilde{t}_{j+2k}] + \tilde{t}_{2k}\tilde{s}_3\tilde{t}_k[\tilde{s}_3,\tilde{t}_0] - [\tilde{s}_3, \tilde{t}_{2k}]$ and

\end{center}

\begin{center}

$\partial_3[\tilde{s}_3,\tilde{t}_0,\tilde{t}_{-k}] = (\tilde{s}_3\tilde{t}_k\tilde{t}_0\tilde{s}_3 - \tilde{t}_k\tilde{t}_0\tilde{s}_3 + \tilde{t}_k\tilde{s}_3)[\tilde{t}_0, \tilde{t}_{-k}] - \tilde{t}_k\tilde{s}_3\tilde{t}_0[\tilde{s}_3,\tilde{t}_{-k}] + (1-\tilde{t}_{2k}+\tilde{s}_3\tilde{t}_{2k})[\tilde{s}_3,\tilde{t}_k] + (1+\tilde{t}_{2k}\tilde{s}_3 - \tilde{s}_3)[\tilde{t}_0,\tilde{t}_k] + (\tilde{t}_k - \tilde{s}_3\tilde{t}_k + \tilde{t}_{2k}\tilde{s}_3\tilde{t}_k)[\tilde{s}_3,\tilde{t}_0] - [\tilde{s}_3,\tilde{t}_{2k}]$.

\end{center}
Also, for $1 \leq i \leq e-1$ and $4 \leq j \leq n$, we have:
\begin{center}

$\partial_3[\tilde{s}_j,\tilde{t}_0,\tilde{t}_i] = (\tilde{s}_j -1)[\tilde{t}_0,\tilde{t}_i] - \tilde{t}_{i+k}[\tilde{s}_j, \tilde{t}_i] + \tilde{t}_k[\tilde{s}_j,\tilde{t}_0] - [\tilde{s}_j,\tilde{t}_{i+k}] + [\tilde{s}_j,\tilde{t}_k]$,

\end{center}
for $x,y,z \in \widetilde{X}$ with $xyx=yxy$, $xz=zx$, and $yzy=zyz$,
\begin{center}

$\partial_3[x,y,z] = (z + xyz - yz -1)[x,y] - [x,z] + (xz-z-x+1-yxz)y[x,z] + (x-1-yx+zyx)[y,z]$,

\end{center}
for $x,y,z \in \widetilde{X}$ with $xyx=yxy$, $xz=zx$, and $yz=zy$,
\begin{center}

$\partial_3[x,y,z] = (1-x+yx)[y,z] + (y-1-xy)[x,z] + (z-1)[x,y]$,

\end{center}
for $x,y,z \in \widetilde{X}$ with $xy=yx$, $xz=zx$, and $yzy=zyz$,
\begin{center}

$\partial_3[x,y,z] = (1+yz-z)[x,y] + (y-1-zy)[x,z] + (x-1)[y,z]$, and

\end{center}
for $x,y,z \in \widetilde{X}$ with $xy=yx$, $xz=zx$, and $yz=zy$,
\begin{center}

$\partial_3[x,y,z] = (1-y)[x,z] + (z-1)[x,y] +(x-1)[y,z]$.

\end{center}

Let $d_r = \partial_r \otimes_{\mathbb{Z} B^{\oplus k}(e,e,n)} \mathbb{Z} : C_r  \otimes_{\mathbb{Z} B^{\oplus k}(e,e,n)} \mathbb{Z} \longrightarrow C_{r-1}  \otimes_{\mathbb{Z} B^{\oplus k}(e,e,n)} \mathbb{Z}$ be the differential with trivial coefficients. For example, for $d_2$, we have:
for all $1 \leq i \leq e-1$,
\begin{center}

$d_2[\tilde{t}_0,\tilde{t}_i] = [\tilde{t}_i] - [\tilde{t}_0] - [\tilde{t}_k] + [\tilde{t}_{i+k}]$,

\end{center}
for $x,y \in \widetilde{X}$ with $xyx=yxy$,
\begin{center}

$d_2[x,y] = [y] - [x]$, and

\end{center}
for $x,y \in \widetilde{X}$ with $xy=yx$,
\begin{center}

$d_2[x,y] = 0$.

\end{center}
The same can be done for $d_3$.

\begin{definition}

Define the integral homology group of order $r$ to be $$H_r(B^{(k)}(e,e,n),\mathbb{Z}) = ker(d_r) / Im(d_{r+1}).$$

\end{definition}

We are ready to compute the first and second integral homology groups of $B^{(k)}(e,e,n)$. Using the presentation of $B^{(k)}(e,e,n)$ given in Definition \ref{DefofB+keen}, one can check that the abelianization of $B^{(k)}(e,e,n)$ is isomorphic to $\mathbb{Z}$. Since $H_1(B^{(k)}(e,e,n),\mathbb{Z})$ is isomorphic to the abelianization of $B^{(k)}(e,e,n)$, we deduce that $H_1(B^{(k)}(e,e,n),\mathbb{Z})$ is isomorphic to $\mathbb{Z}$. Since $H_1(B(e,e,n),\mathbb{Z})$ is also isomorphic to $\mathbb{Z}$ (see \cite{BMR}), the first integral homology group does not give any additional information whether these groups are isomorphic to a certain complex braid group of type $B(e,e,n)$ or not.\\

Recall that by \cite{CalMarHomologyComputations}, we have

\begin{itemize}

\item $H_2(B(e,e,3),\mathbb{Z}) \simeq \mathbb{Z}/e\mathbb{Z}$ where $e \geq 2$,
\item $H_2(B(e,e,4),\mathbb{Z}) \simeq \mathbb{Z}/e\mathbb{Z} \times \mathbb{Z}/2\mathbb{Z}$ when $e$ is odd and $H_2(B(e,e,4),\mathbb{Z}) \simeq \mathbb{Z}/e\mathbb{Z} \times (\mathbb{Z}/2\mathbb{Z})^2$ when $e$ is even,
\item $H_2(B(e,e,n),\mathbb{Z}) \simeq \mathbb{Z}/e\mathbb{Z} \times \mathbb{Z}/2\mathbb{Z}$ when $n \geq 5$ and $e \geq 2$.

\end{itemize}

In order to compute $H_2(B^{(k)}(e,e,n),\mathbb{Z})$, we follow exactly the proof of \mbox{Theorem 6.4} in \cite{CalMarHomologyComputations} and use the same notations. We only point out the part of our proof that is different from \cite{CalMarHomologyComputations}. Define $v_i = [\tilde{t}_0,\tilde{t}_i] + [\tilde{s}_3,\tilde{t}_0] + [\tilde{s}_3,\tilde{t}_k] - [\tilde{s}_3,\tilde{t}_i] - [\tilde{s}_3, \tilde{t}_{i+k}]$ where $1 \leq i \leq e-1$. As in \cite{CalMarHomologyComputations}, we also have $H_2(B^{(k)}(e,e,n),\mathbb{Z}) = (K_1/d_3(C_1)) \oplus (K_2/d_3(C_2))$. We have $d_3[\tilde{s}_3,\tilde{t}_0, \tilde{t}_j] = v_j - v_{j+k} + v_k$ if $j \neq -k$ and $d_3[\tilde{s}_3,\tilde{t}_0, \tilde{t}_{-k}] =v_{-k} + v_k$. Denote $u_i = [\tilde{s}_3, \tilde{t}_0,\tilde{t}_i]$ for $1 \leq i \leq e-1$. We define a basis of $C_2$ as follows. For each coset of the subgroup of $\mathbb{Z}/e\mathbb{Z}$ generated by the class of $k$, say $\{\tilde{t}_x,\tilde{t}_{x+k}, \cdots, \tilde{t}_{x-k}\}$ such that $1 \leq x \leq e-1$, we define $w_{x+ik} = u_{x+ik} + u_{x+(i+1)k} + \cdots + u_{x-k}$ for $0 \leq i \leq e-1$, and when $x=0$, we define $w_{ik} = u_{ik} + u_{(i+1)k} + \cdots + u_{-k}$ for $1 \leq i \leq e-1$. Written on the $\mathbb{Z}$-basis $(w_{k}, w_{2k}, \cdots , w_{-k}, w_{1}, w_{1+k}, \cdots, w_{1-k}, \cdots, w_{x}, w_{x+k}, \cdots, w_{x-k}, \cdots)$ and $(v_k, v_{2k}, \cdots, v_{-k}, v_1, v_{1+k}, \cdots, v_{1-k}, \cdots, v_x, v_{x+k}, \cdots, v_{x-k}, \cdots )$, $d_3$ is in triangular form with $(e \wedge k) -1$ diagonal coefficients that are zero, all other diagonal coefficients are equal to $1$ except one of them that is equal to $e' = e / (e \wedge k)$. In this case, we have $H_2(B^{(k)}(e,e,3),\mathbb{Z}) = \mathbb{Z}^{(e \wedge k)-1} \times \mathbb{Z}/e'\mathbb{Z}$. The rest of the proof is essentially similar to the proof of Theorem 6.4 in  \cite{CalMarHomologyComputations}.

When $n=4$, we get $[\tilde{s}_4,\tilde{t}_i] + [\tilde{s}_4,\tilde{t}_{i+k}] \equiv [\tilde{s}_4,\tilde{t}_{k}] + [\tilde{s}_4,\tilde{t}_{0}]$ for every $i$ and since $2[\tilde{s}_4,\tilde{t}_i] \equiv 0$ for every $i$, we get $K_2/d_3(C_2) \simeq (\mathbb{Z}/2\mathbb{Z})^c$, where $c$ is the number of $[\tilde{s}_4,\tilde{t}_j]$ that appear in the solution of the congruence relations $[\tilde{s}_4,\tilde{t}_i] + [\tilde{s}_4,\tilde{t}_{i+k}] \equiv [\tilde{s}_4,\tilde{t}_{k}] + [\tilde{s}_4,\tilde{t}_{0}]$ and $2[\tilde{s}_4,\tilde{t}_i] \equiv 0$ in $\mathbb{Z}/e\mathbb{Z}$. We summarize our proof by providing the second integral homology group of $B^{(k)}(e,e,n)$ in the following proposition.

\begin{proposition}\label{PropH2ofBkeen}

Let $e \geq 2$, $n \geq 3$, $1 \leq k \leq e-1$, and $e' = e/ e \wedge k$.
\begin{itemize}
\item If $n=3$, we have $H_2(B^{(k)}(e,e,3),\mathbb{Z}) \simeq \mathbb{Z}^{(e \wedge k)-1} \times \mathbb{Z}/e'\mathbb{Z}$.
\item If $n=4$, we have $H_2(B^{(k)}(e,e,4),\mathbb{Z}) \simeq \mathbb{Z}^{(e \wedge k)-1} \times \mathbb{Z}/e'\mathbb{Z} \times (\mathbb{Z}/2\mathbb{Z})^{c}$, where $c$ is defined in the previous paragraph.
\item If $n \geq 5$, we have $H_2(B^{(k)}(e,e,n),\mathbb{Z}) \simeq \mathbb{Z}^{(e \wedge k)-1} \times \mathbb{Z}/e'\mathbb{Z} \times \mathbb{Z}/2\mathbb{Z}$.

\end{itemize}

\end{proposition}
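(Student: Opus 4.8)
The plan is to compute $H_2(B^{(k)}(e,e,n),\mathbb{Z}) = \ker(d_2)/\operatorname{im}(d_3)$ directly from the Dehornoy--Lafont chain complex with trivial coefficients, whose differentials $d_1, d_2, d_3$ are recorded above, following the strategy used for $B(e,e,n)$ in \cite{CalMarHomologyComputations}. First I would determine $\ker(d_2)$ from the explicit formulas: the cells $[x,y]$ with $xy=yx$ already lie in the kernel since $d_2[x,y]=0$, the braid cells satisfy $d_2[x,y]=[y]-[x]$, and the dual-dihedral cells satisfy $d_2[\tilde t_0,\tilde t_i]=[\tilde t_i]-[\tilde t_0]-[\tilde t_k]+[\tilde t_{i+k}]$. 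Solving this system yields the decomposition $\ker(d_2)=K_1\oplus K_2$ and hence $H_2=(K_1/d_3(C_1))\oplus(K_2/d_3(C_2))$, where $K_1$ is spanned by cycles built only from $\tilde s_3$ and the $\tilde t_i$, and $K_2$ gathers the contributions of $\tilde s_4,\dots,\tilde s_n$.

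The heart of the argument is the case $n=3$, where only $K_1$ occurs. Here the cycles $v_i$ span $K_1$, and the relevant $3$-cells are $u_j=[\tilde s_3,\tilde t_0,\tilde t_j]$ with boundaries $d_3 u_j=v_j-v_{j+k}+v_k$ for $j\neq -k$ and $d_3 u_{-k}=v_{-k}+v_k$. The plan is to exploit the cyclic structure: the indices split into the cosets of the subgroup $\langle k\rangle\le\mathbb{Z}/e\mathbb{Z}$, of which there are $e\wedge k$, each of size $e'=e/(e\wedge k)$. Replacing the bases of $C_2$ and $K_1$ by the partial-sum bases $(w_\bullet)$ and $(v_\bullet)$ adapted to these cosets, the matrix of $d_3$ becomes triangular; within each coset all diagonal entries equal $1$ except a single one equal to $e'$, and the closing of each coset cycle produces $(e\wedge k)-1$ vanishing columns. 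Reading off the resulting normal form gives $H_2(B^{(k)}(e,e,3),\mathbb{Z})\cong\mathbb{Z}^{(e\wedge k)-1}\times\mathbb{Z}/e'\mathbb{Z}$.

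I expect the triangularization of $d_3$ on $K_1$ to be the \emph{main obstacle}, since it amounts to computing the cokernel of a circulant-type map whose block structure is dictated by the coset partition of $\mathbb{Z}/e\mathbb{Z}$ under $\langle k\rangle$; choosing the summation order along each coset so that the matrix is simultaneously triangular and exhibits the single $e'$ entry is exactly where the arithmetic quantity $e\wedge k$ enters, and this is the point that differs genuinely from the $k=1$ computation of \cite{CalMarHomologyComputations}.

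Finally, for $n\geq 4$ I would add the $K_2$ part, whose generators come from the $3$-cells $[\tilde s_j,\tilde t_0,\tilde t_i]$ with $4\le j\le n$ and whose boundaries are recorded above. For $n=4$, reducing modulo the image gives $[\tilde s_4,\tilde t_i]\equiv[\tilde s_4,\tilde t_{i+2k}]$ together with $2[\tilde s_4,\tilde t_i]\equiv 0$, so $K_2/d_3(C_2)\cong(\mathbb{Z}/2\mathbb{Z})^c$ with $c$ the number of cosets of $\langle 2k\rangle$; combining with the $K_1$ part yields the stated group. For $n\geq 5$ the additional commutations among the $\tilde s_j$ collapse this contribution to a single $\mathbb{Z}/2\mathbb{Z}$, exactly as in the $B(e,e,n)$ computation, so that $H_2\cong\mathbb{Z}^{(e\wedge k)-1}\times\mathbb{Z}/e'\mathbb{Z}\times\mathbb{Z}/2\mathbb{Z}$.
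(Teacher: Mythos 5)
Your proposal follows essentially the same route as the paper's proof: the splitting $H_2 = (K_1/d_3(C_1)) \oplus (K_2/d_3(C_2))$, the cycles $v_i$ with $d_3[\tilde{s}_3,\tilde{t}_0,\tilde{t}_j] = v_j - v_{j+k} + v_k$ (and $v_{-k}+v_k$ at $j=-k$), the partial-sum bases $w_\bullet$ adapted to the cosets of $\langle k\rangle$ putting $d_3$ in triangular form for $n=3$, and the $n=4$ reduction via $[\tilde{s}_4,\tilde{t}_i]\equiv[\tilde{s}_4,\tilde{t}_{i+2k}]$ and $2[\tilde{s}_4,\tilde{t}_i]\equiv 0$ yielding $(\mathbb{Z}/2\mathbb{Z})^c$, with the $n\geq 5$ collapse to a single $\mathbb{Z}/2\mathbb{Z}$, are exactly the steps of the text, which itself adapts Theorem 6.4 of \cite{CalMarHomologyComputations}. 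One phrasing slip worth correcting: only one diagonal entry equals $e'$ overall (the closing relation $e'v_k \equiv 0$ of the coset $\langle k\rangle$ itself), whereas each of the other $(e\wedge k)-1$ cosets contributes a \emph{zero} diagonal entry because its closing relation $v_x \equiv v_x + e'v_k$ is redundant --- not ``a single $e'$ entry within each coset'' as you wrote, which if taken literally would give $(\mathbb{Z}/e'\mathbb{Z})^{e\wedge k}$ torsion instead of the free factor $\mathbb{Z}^{(e\wedge k)-1}$ that you correctly state in your conclusion.
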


Comparing this result with $H_2(B(e,e,n),\mathbb{Z})$, one can readily check that if $k \wedge e \neq 1$, $B^{(k)}(e,e,n)$ is not isomorphic to a complex braid group of type $B(e,e,n)$. Thus, we conclude this section by the following theorem.

\medskip

\begin{theorem}\label{TheoremBkIsomBiff}

$B^{(k)}(e,e,n)$ is isomorphic to $B(e,e,n)$ if and only if \mbox{$k\wedge e=1$.}

\end{theorem}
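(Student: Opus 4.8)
The plan is to treat the two implications separately, relying entirely on results already established upstream. The forward implication, that $k \wedge e = 1$ forces $B^{(k)}(e,e,n) \cong B(e,e,n)$, is precisely Corollary \ref{CorIsomGrFractBraidGroup}, so nothing new is needed there: it descends from the monoid isomorphism $B^{\oplus k}(e,e,n) \cong B^{\oplus}(e,e,n)$ of Proposition \ref{PropIsomwithMonoidCP} by passing to groups of fractions, since the group of fractions is functorial on isomorphic Ore monoids.

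For the converse I would argue by contraposition: assuming $d := k \wedge e > 1$, I show $B^{(k)}(e,e,n)$ is not isomorphic to $B(e,e,n)$. The tool is that integral homology is an isomorphism invariant of a group, so it suffices to exhibit a single degree in which the two homology groups differ. The natural choice is the second integral homology, already computed in Proposition \ref{PropH2ofBkeen}. In every range $n \geq 3$ the group $H_2(B^{(k)}(e,e,n),\mathbb{Z})$ contains $\mathbb{Z}^{(e\wedge k)-1} = \mathbb{Z}^{d-1}$ as a direct summand, and since $d - 1 \geq 1$ this homology group has strictly positive free rank, hence is infinite. By contrast, the values of $H_2(B(e,e,n),\mathbb{Z})$ recalled from \cite{CalMarHomologyComputations} are finite abelian groups in all three cases ($n=3$, $n=4$, and $n \geq 5$): they are extensions of $\mathbb{Z}/e\mathbb{Z}$ by elementary abelian $2$-groups and carry no free part. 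A finite group cannot be isomorphic to an infinite one, so $H_2(B^{(k)}(e,e,n),\mathbb{Z}) \not\cong H_2(B(e,e,n),\mathbb{Z})$, and therefore $B^{(k)}(e,e,n) \not\cong B(e,e,n)$. Combining the two directions yields the stated equivalence.

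I would stress that the genuine content of the argument lies in the preceding results, not in the theorem itself: the Garside-theoretic identification of Proposition \ref{PropIsomwithMonoidCP} for the easy direction, and above all the Dehornoy--Lafont homology computation behind Proposition \ref{PropH2ofBkeen}, whose free rank $(e\wedge k)-1$ is exactly the invariant that detects $k \wedge e \neq 1$. The main obstacle, namely carrying out the differentials $d_2$ and $d_3$ on the Dehornoy--Lafont complex and reading off the rank of $H_2$, has already been surmounted in that proposition, so here the theorem reduces to a short comparison of finite versus infinite homology. As an independent sanity check I would note that the free rank $d-1$ equals one less than the number of connected components linking the $\tilde{t}_i$ in the diagram $\Gamma_k$, which matches the amalgamated-product decomposition of Proposition \ref{PropBkeenAmalgamatedProduct} into $d$ copies of $B(e',e',n)$: this gives a structural explanation of why the extra $\mathbb{Z}$-factors appear precisely when $k \wedge e > 1$, and reassures us that the homological obstruction is genuine rather than an artifact of the computation.
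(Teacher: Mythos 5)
Your proposal is correct and follows essentially the same route as the paper: the forward direction is Corollary \ref{CorIsomGrFractBraidGroup} (descending from the monoid isomorphism of Proposition \ref{PropIsomwithMonoidCP}), and the converse compares the second integral homology of Proposition \ref{PropH2ofBkeen} with the known finite groups $H_2(B(e,e,n),\mathbb{Z})$ from \cite{CalMarHomologyComputations}. Your explicit observation that the free summand $\mathbb{Z}^{(e\wedge k)-1}$ makes $H_2(B^{(k)}(e,e,n),\mathbb{Z})$ infinite when $k\wedge e>1$ is exactly the invariant the paper uses, stated a bit more sharply than the paper's brief ``one can check'' remark.
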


\medskip

In Appendix A of \cite{ThesisNeaime}, we provide a general code to compute the homology groups of order $r \geq 2$ of Garside structures by using the Dehornoy-Lafont complexes. In particular, we apply the code for the Garside monoids $B^{\oplus k}(e,e,n)$ that have been already implemented (see \cite{CHEVIEJMichel} and \cite{GAP3CPMichelNeaime}) and compute some cases of the integral homology groups of $B^{(k)}(e,e,n)$.

\section*{Acknowledgements}

This article is part of my doctoral thesis defended at Universit\'e de Caen Normandie. I thank my PhD supervisors Eddy Godelle and Ivan Marin for fruitful discussions.

\bibliographystyle{plain}
\bibliography{ArxivIntervalStructuresB(een)V2.bib}

\end{document}